\newtheorem*{theorem*}{Theorem}
\newtheorem{theorem}{Theorem}[section]
\newtheorem{conj}[theorem]{Conjecture}
\newtheorem{lemma}[theorem]{Lemma}
\newtheorem{problem}[theorem]{Problem}
\newtheorem{cor}[theorem]{Corollary}
\newtheorem{prop}[theorem]{Proposition}
\newtheorem{proposition}[theorem]{Proposition}
\newcommand{\CC}{\mathcal{C}}
\newcommand{\PP}{\mathcal{P}}
\newcommand{\FF}{\mathcal{F}}
\newcommand{\RR}{\mathcal{R}}
\newcommand{\HH}{\mathcal{H}}
\newcommand{\GG}{\mathcal{G}}
\newcommand{\lpt}{\mathrm{lpt}}
\newcommand{\lct}{\mathrm{lct}}
\newcommand{\comment}[1]{}
\newcommand{\set}[1]{\left\{#1\right\}}
\renewcommand{\ss}{\subseteq}
\renewcommand{\emptyset}{\varnothing}
\renewcommand{\setminus}{-}
\newcommand{\ceil}[1]{\left \lceil #1 \right \rceil}
\newcommand{\floor}[1]{\left \lfloor #1 \right \rfloor}
\newcommand{\bag}[1]{B(#1)}
\newcommand{\subtree}[1]{S(#1)}
\newcommand{\st}{\colon\,}
\newcommand{\leafage}[1]{\ell(#1)}
\newcommand{\tw}[1]{\mathrm{tw}(#1)}
\newcommand{\mmf}[1]{f(#1)}
\newcommand{\ccg}[1]{\mathrm{ccg}(#1)}
\newcommand{\descendants}[3]{D_{#3}(#1,#2)}
\title{Longest Path and Cycle Transversals in Chordal Graphs}
\author{James A. Long Jr. \thanks{Department of Computer Science \& Mathematics, Fairmont State University, Fairmont, WV, USA. Email: james.long@fairmontstate.edu} \and Kevin G. Milans \thanks{School of Mathematical and Data Sciences, West Virginia University, Morgantown, WV, USA. Email: milans@math.wvu.edu} \and Michael C. Wigal \thanks{Department of Mathematics, University of Illinois at Urbana-Champaign, Urbana, IL, USA. Email: wigal@illinois.edu}}
\date{\today}
\begin{document}

\maketitle

\begin{abstract}
    We show that if $G$ is a $n$-vertex connected chordal graph, then it admits a longest path transversal of size $O(\log^2 n)$. Under the stronger assumption of 2-connectivity, we show $G$ admits a longest cycle transversal of size $O(\log n)$. We also provide longest path and longest cycle transversals which are bounded by the leafage of the chordal graph.
\end{abstract}

\section{Introduction}

Gallai \cite{G68} asked whether the intersection of all longest paths in a connected graph is nonempty.  Motivated by this question, a \emph{Gallai vertex} in a graph is a vertex belonging to every longest path. The question was resolved in the negative by Walther~\cite{W69}, who constructed a connected graph with no Gallai vertices.  Smaller examples of connected graphs with no Gallai vertices were found by both Walther and Voss~\cite{WV74} and 
Zamfirescu~\cite{Z76}. One such counterexample is the Petersen fragment, which is obtained from splitting an arbitrary vertex of the Petersen graph into three degree one vertices, see \Cref{fig:test}.  There has been much work understanding how longest paths intersect, with some surveys being available~\cite{SZZ13,Z01}.

A family of graphs $\GG$ is \emph{Gallai} if each connected $G\in\GG$ has at least one Gallai vertex.  There is an ongoing line of research to determine which graph families are Gallai, see for example \cite{BGLS04,CFCGGL20,CL20,CEFHSYY17,JKLW16,J15,KP90}.  Given a set of graphs $\FF$, a graph $G$ is \emph{$\FF$-free} of it does not contain any member of $\FF$ as an induced subgraph.  There has been some work toward a characterization of the families $\FF$ for which the $\FF$-free graphs are Gallai when $\FF$ is very small, see \cite{GS18,LN24,LMM23}, with the full characterization remaining an open question.

Given a family of sets $\FF$, the \emph{intersection graph} on $\FF$ is the graph with vertex set $\FF$ with vertices $u$ and $v$ adjacent if and only if $u$ and $v$ have non-empty intersection.  A graph $G$ is an \emph{interval graph} if $G$ is the intersection graph of a family of closed intervals in $\mathbb{R}$.  A \emph{chord} of a cycle $C$ in a graph $G$ is an edge $e \in E(G) - E(C)$ with both endpoints on $C$.  A graph $G$ is \emph{chordal} if each cycle on at least $4$ vertices has a chord.  In 2004, Balister, Gy\H{o}ri, Lehel, and Schelp~\cite{BGLS04} showed that the interval graphs form a Gallai family, and suggested that the same might be true for the larger family of chordal graphs. The question remains open.

There is some partial evidence that the family of chordal graphs are Gallai. A graph $G$ is a \emph{split graph} if $V(G)$ can be partitioned into a clique and an independent set. All split graphs are chordal.  Bender,  Richmond, and Wormald \cite{BRW85} proved that a uniformly sampled labeled chordal graph almost surely splits. As Klav\v{z}ar and Petkov\v{s}ek~\cite{KP90} showed split graphs are Gallai, we may conclude almost all (labeled) chordal graphs are also Gallai.

A \emph{longest path transversal} in $G$ is a set of vertices that intersects every longest path in $G$.  The \emph{longest path transversal number} of $G$, denoted $\lpt(G)$, is the minimum size of a longest path transversal.  Showing that $\GG$ is a Gallai family amounts to proving $\lpt(G)=1$ for each connected $G\in\GG$.  Kierstead and Ren~\cite{KR2023} proved that $\lpt(G)\le 5n^{2/3}$ when $G$ is a connected $n$-vertex graph, improving on an earlier bound of $\lpt(G) \le 8n^{3/4}$ due to Long, Milans, and Munaro~\cite{LMM21}.  From below, Grünbaum~\cite{GRUNBAUM} constructed a connected graph $G$ with $\lpt(G)=3$.  There is no known connected graph $G$ with $\lpt(G)\ge 4$.  We state our first main result, where $\lg x$ denotes $\log_2 x$. 

\begin{theorem*}[See \Cref{thm:lpt_bound}]
    If $G$ is a connected $n$-vertex chordal graph, then $\lpt(G)\le 4\lg^2 n + O(\log n)$.
\end{theorem*}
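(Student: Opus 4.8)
The plan is to recurse on a balanced clique separator extracted from a clique tree of $G$. Since $G$ is chordal, it has a clique tree $T$ whose nodes are the maximal cliques of $G$, and choosing a centroid node of $T$ yields a clique $S$ such that every component of $G - S$ has at most $n/2$ vertices. Writing $f(n)$ for the bound on $\lpt(G)$ we are proving, the goal is a recurrence of the shape $f(n) \le f(n/2) + O(\log n)$, whose solution is $O(\log^2 n)$; tracking constants through the centroid decomposition (depth about $\lg n$, and cost about $4\lg n$ per level) is what is meant to produce the stated $4\lg^2 n + O(\log n)$.

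The main dichotomy is according to the number $p$ of vertices on a longest path. If $p > n/2$, then no longest path can lie inside a single component of $G - S$, since each has at most $n/2 < p$ vertices, so every longest path meets $S$; it then suffices to produce a small subset of $S$ meeting every longest path, with no recursion needed. If $p \le n/2$, a longest path either meets $S$, in which case we again hit it inside $S$, or it avoids $S$ and is contained in a single component $C$; since $G[C] \subseteq G$ realizes length exactly $p$, such paths are precisely the longest paths of $G[C]$, and we recurse on the relevant components.

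The first main obstacle is the within-clique step: given a family of longest paths each meeting the clique $S$, find $O(\log n)$ vertices of $S$ meeting all of them. A single vertex does not suffice, because a longest path need not use every vertex of $S$. A short exchange argument shows that a longest path omitting some vertex of $S$ must have both endpoints outside $S$ and must visit the vertices of $S$ one at a time, separated by nonempty excursions into the components of $G - S$: if two $S$-vertices were consecutive on the path, or an endpoint lay in $S$, an omitted vertex of $S$ could be inserted or appended to lengthen it. I plan to exploit this rigid structure together with a median/halving argument on the excursion lengths available at the vertices of $S$: assign each vertex of $S$ a weight reflecting the longest excursions attachable there, argue that every longest path uses a vertex whose weight lies in a controllable range, select a median vertex, and recurse on $S$, paying $O(\log |S|) = O(\log n)$ vertices in total.

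The second obstacle, and the one I expect to be most delicate, is preventing the recursion into components from blowing up: naively recursing into every component realizing length $p$ would replace $f(n/2)$ by a sum $\sum_C f(|C|)$ and destroy the bound. The plan is to show that the longest paths avoiding $S$ concentrate, so that it suffices to recurse along a single branch of the centroid decomposition at each step; concretely, I would either argue that two distinct components cannot each carry a longest path of $G$ (using that such a path has both endpoints non-adjacent to $S$, and playing this against the maximality of $p$), or else organize the accounting by levels of the centroid decomposition so that the total cost incurred at each level is $O(\log n)$ and the number of levels is $O(\log n)$. Combining the per-level $O(\log n)$ cost with the $O(\log n)$ depth then yields the claimed $O(\log^2 n)$ bound.
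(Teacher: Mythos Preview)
Your high-level architecture is close in spirit to the paper's: both achieve $O(\log^2 n)$ as a product of $O(\log n)$ recursion depth (centroid decomposition on the clique tree, or equivalently the paper's Cutter--Chooser game) times an $O(\log n)$ cost per level. Your second obstacle is also not a real obstacle: longest paths in a connected graph pairwise intersect, so any two longest paths that avoid the separator clique $S$ must lie in the same component of $G-S$, and you recurse into at most one branch. So far so good.

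The genuine gap is your first obstacle, and it is the entire content of the theorem. You assert that, given the clique $S$ and the family of longest paths meeting $S$, an $O(\log n)$-size hitting set inside $S$ can be found by a ``median/halving argument on the excursion lengths available at the vertices of $S$.'' This is not an argument; it is a hope. Your structural observation (no two $S$-vertices are consecutive on a longest path missing a vertex of $S$, and no endpoint lies in $S$) is correct, but it does not by itself yield any ordering or weight on $S$ along which to halve. Different longest paths may use disjoint subsets of $S$ and intersect only outside $S$, so there is no evident total order on $S$ that all of them respect, and ``longest excursion attachable at $v$'' is ill-defined once you remember that excursions are two-ended and live in several components simultaneously.

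The paper's solution to exactly this step is substantially different from what you sketch. It does not work inside a single bag $S$. Instead it tracks, for each surviving longest path, a \emph{core vertex} in the host tree $T$ (a node of $T$ witnessing some edge of the path), and it cuts $T$ along a root-to-centroid \emph{path} $Q$, not a single node. The per-level step (Lemma~4.5) first finds two glue vertices $w_1,w_2$ whose subtrees span a minimal capturing subpath, then splits the surviving paths into ``round trip'' paths (both endpoints' subtrees leave the current region) and ``one way'' paths (at least one endpoint is trapped). The one-way paths are handled cheaply by a longest-suffix exchange. The round-trip paths are handled by a \emph{nested} recursion (Corollary~4.3), itself a full Cutter--Chooser game on a subtree, and it is this inner game that produces the $O(\log n)$ cost per outer level. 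Nothing resembling a median on excursion lengths appears. If you want to salvage your outline, this is the lemma you actually need to prove, and your current sketch does not contain its mechanism.
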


A \emph{tree} is a connected acyclic graph. In 1974, Gavril~\cite{GAVRIL74} showed that a graph $G$ is chordal if and only if $G$ is the intersection graph of subtrees of a tree $T$. In this case, we say $T$ is a \emph{host tree} for the chordal graph $G$. The subtrees of a tree are well known to have the \emph{Helly property} (e.g., see \cite{G04}), meaning that if $\FF$ is a set of subtrees of $T$ and every pair of trees in $\FF$ has nonempty intersection, then some vertex in $T$ is common to each subtree in $\FF$.  Since the longest paths in a connected graph are pairwise intersecting, showing that $\GG$ is Gallai is equivalent to showing that for each connected $G\in\GG$, the longest paths in $G$ have the Helly property.  As the longest paths in a connected graph are pairwise intersecting, the longest paths in a tree are pairwise intersecting subtrees, and it follows from the Helly property that the trees form a Gallai family.

The treewidth of a graph is a fundamental graph parameter introduced in the graph minors project of Robertson and Seymour \cite{RS86}. In nonrigorous terms, treewidth is a measure of how far a graph is from being a tree. Treewidth has received significant attention due to its applications in fixed parameter tractable algorithms; many problems which are NP-hard in general have polynomial algorithms when restricted to graphs with bounded treewidth.  The \emph{treewidth} of a graph $G$, denoted $\tw{G}$, can be defined as $\tw{G} = \min\{ \omega(H) - 1 : G \subseteq H \text{ and } H \text{ is chordal}\}$ where $\omega(H)$ denotes the size of the largest clique in $H$.  In particular for a chordal graph $G$, we have that $\tw{G} = \omega(G) - 1$. For a reference on treewidth, see \cite{R97}.

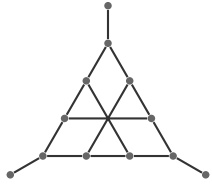
\begin{figure}
    \begin{center}
    \begin{tikzpicture}
        \tikzset{vertex/.style={circle,fill=black!60,inner sep=1pt,minimum size=0.5ex}}
        \tikzset{edge/.style={draw,line width=0.8pt,black!80}}

		\begin{scope}[yshift=-0.2cm]
		\begin{scope}[every node/.style={vertex},rotate=90]
			\foreach \n in {0,...,2}
			{{
				\pgfmathsetmacro{\ang}{\n * 360/3}
				\draw (\ang:1.5cm) node[vertex] (U\n) {} ;
			}}
			
			\pgfmathsetmacro{\increment}{sqrt(3)/3}
			\path ++(0:1cm) node (V0) {} 
				++(150:\increment cm) node (V1) {} 
				++(150:\increment cm) node (V2) {}
				++(150:\increment cm) node (V3) {} 
				++(-90:\increment cm) node (V4) {} 
				++(-90:\increment cm) node (V5) {} 
				++(-90:\increment cm) node (V6) {} 
				++(30:\increment cm) node (V7) {} 
				++(30:\increment cm) node (V8) {} ;
				
			\foreach \m in {0,...,8}
			{{
				\pgfmathsetmacro{\suc}{int(mod(\m + 1, 9))}
				\draw[edge] (V\m) -- (V\suc) ;
			}}
			
		\end{scope}
		
		\draw[edge] (U0) edge (V0) (U1) edge (V3) (U2) edge (V6) ;
		\draw[edge] (V1) -- (V5) (V2) -- (V7) (V4) -- (V8) ;
	\end{scope}
	\end{tikzpicture}
 \end{center}
 \caption{Petersen fragment}\label{fig:test}
\end{figure}

A \emph{bramble} of a graph is a set of connected subgraphs which pairwise intersect or are joined by an edge.  The \emph{order} of a bramble $B$ is the minimum size of a set of vertices intersecting each subgraph in $B$.   It is well known~\cite{ST93} that $\tw{G}+1$ is equal to the maximum order of a bramble in $G$. As observed by Rautenbach and Sereni \cite{RS14}, as the family of longest paths in a connected graph form a bramble, it follows that $\lpt(G) \le \tw{G}+1$ when $G$ is connected.  If $G$ is connected and chordal, then $\lpt(G) \le \tw{G} + 1 = \omega(G)$. Later, Harvey and Payne~\cite{HP23} improved this to $\lpt(G) \le 4\ceil{(\omega(G))/5}$ when $G$ is a connected chordal graph.  

A closely related problem is finding a small set of vertices intersecting every longest cycle, called a \emph{longest cycle transversal}.  The \emph{longest cycle transversal number} of a graph $G$, denoted $\lct(G)$, is the minimum size of a longest cycle transversal.  Unlike for longest paths, connectivity alone is not sufficient for the family of longest cycles in a graph to be pairwise intersecting, and $\lct(G)$ can be as large as $|V(G)|/3$ even when $G$ is connected. However, if $G$ is $2$-connected, then the longest cycles in $G$ are pairwise intersecting (see, for example, \Cref{lem:cycle_intersection}).  In many cases, bounds on $\lpt(G)$ for connected $G$ have analogous bounds on $\lct(G)$ for $2$-connected $G$.  Harvey and Payne~\cite{HP23} proved that $\lct(G) \le 2\ceil{\omega(G)/3}$ when $G$ is a $2$-connected chordal graph. Our second main result is the following.

\begin{theorem*}[See \Cref{thm:lct_bound}]
    If $G$ is an $n$-vertex $2$-connected chordal graph with minimal tree representation $T$, then $\lct(G)\le 4(1+\floor{\lg n})$.  
\end{theorem*}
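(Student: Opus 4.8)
The plan is to first reduce the problem to selecting a few vertices from a single clique, and then to run a divide-and-conquer over the host tree $T$ that contributes only a constant number of vertices per level while halving the relevant portion of $T$.

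For the reduction, I would start from the fact that the longest cycles of a $2$-connected graph pairwise intersect (\Cref{lem:cycle_intersection}). For a longest cycle $C$, let $T_C = \bigcup_{v \in V(C)} \subtree{v}$ be the union in $T$ of the host subtrees of the vertices of $C$; since $C$ is connected in $G$, the set $T_C$ is a connected subtree of $T$. If $C$ and $C'$ are longest cycles, then they share a vertex $v$, so $\subtree{v} \ss T_C \cap T_{C'}$ and hence the subtrees $\{T_C\}$ pairwise intersect. By the Helly property for subtrees of a tree, there is a node $x^{\ast} \in T$ lying in every $T_C$; equivalently, every longest cycle contains a vertex $v$ with $x^{\ast} \in \subtree{v}$, so the clique $\bag{x^{\ast}}$ is a longest cycle transversal. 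This already recovers $\lct(G) \le \omega(G)$, and the remaining task is to replace the single large clique by $O(\log n)$ well-chosen vertices.

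For the logarithmic bound, I would set up a recursion over $T$. At each step we work inside a connected region $T' \ss T$ (initially $T' = T$, which has at most $n$ nodes since the maximal cliques of a chordal graph number at most $n$), maintaining the invariant that every not-yet-hit longest cycle $C$ satisfies $T_C \cap T' \neq \emptyset$. Choosing a centroid node $c$ of $T'$, the bag $\bag{c}$ is a clique that separates the components of $T' - c$, each of which has at most $|V(T')|/2$ nodes. The idea is to add at most four vertices of $\bag{c}$ — the natural candidates being, for each of the two heaviest branches at $c$, the vertex whose host subtree reaches furthest into that branch — and to argue that every longest cycle avoiding these four vertices has $T_C$ confined to a single branch of $T' - c$. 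We then recurse into that one branch. Because $|V(T')|$ halves at each step, after at most $1 + \floor{\lg n}$ steps the region is exhausted, and adding four vertices per step yields $\lct(G) \le 4(1 + \floor{\lg n})$.

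The main obstacle is the confinement step, which must accomplish two things at once. First, it must guarantee that the recursion does not branch: all remaining longest cycles must fall into the \emph{same} component of $T' - c$. Here the pairwise intersection of longest cycles is essential — two longest cycles confined to different components can meet only inside the separating clique $\bag{c}$, so a careful choice of the added vertices should force one of the two cycles to be hit. Second, it must show four vertices suffice to catch every cycle that straddles the separator; since $G$ is $2$-connected, a longest cycle entering two different components of $T' - c$ must pass through $\bag{c}$ at least twice, and the extremal-reach vertices are designed to intercept exactly such crossings. Making ``reaches furthest into a branch'' precise — exploiting that all host subtrees meeting $\bag{c}$ contain $c$ and are therefore nested along each branch direction — and verifying that a cycle avoiding the four chosen vertices cannot escape its branch, is where the real work lies; the interval-graph case, in which $T$ is a path and the two branches reduce to ``left of $x^{\ast}$'' and ``right of $x^{\ast}$,'' is a useful guide for the general argument.
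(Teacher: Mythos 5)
Your skeleton matches the paper's (a Helly/clique warm-up, then a centroid-style divide-and-conquer contributing four vertices per round, with halving giving $1+\floor{\lg n}$ rounds; compare \Cref{prop:ccg-bound} and \Cref{thm:lct-upper-game}), but the step you yourself flag as ``where the real work lies'' is precisely the paper's main technical lemma (\Cref{lem:cycle-trans}), and the mechanism you propose for it does not work. Your confinement claim is too strong: you need every longest cycle avoiding your four chosen vertices of $\bag{c}$ to have $T_C$ inside a single branch of $T'-c$, but such a cycle may still pass through the (possibly huge) clique $\bag{c}$ many times via unchosen vertices. The insertion argument that would forbid a crossing needs a chosen vertex $w$ adjacent to \emph{both} endpoints of some crossing edge $uv$ of the cycle, where $u\in \bag{c}$ and $\subtree{v}$ lies in a branch $B$; adjacency to $u$ is free (both subtrees contain $c$), but adjacency to $v$ requires $\subtree{w}\cap\subtree{v}\neq\emptyset$. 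The subtrees of $\bag{c}$-vertices restricted to a branch are nested along each single path out of $c$, but they are \emph{not} nested as subtrees: when the branch itself branches, different members of $\bag{c}$ reach far down different legs, and no four (indeed no constant number) of them cover the union of all of them. Hence ``reaches furthest into the branch'' is not just undefined but unrepairable: a crossing cycle can make all its transitions at edges beyond the reach of whichever four subtrees you pick. This is exactly where the interval-graph intuition (where nestedness makes two extremal intervals dominate everything) breaks down.

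The paper's proof differs in the two places needed to close this gap. First, the invariant carried through the recursion is not ``$T_C$ meets (or is confined to) the current region'' but the \emph{core capture property}: every surviving cycle has an edge $uv$ with $\subtree{u}\cap\subtree{v}$ meeting the region; each round only concludes that all surviving cycles have a core vertex in one common branch, never that they are confined there. Second, the cut is made along a root-to-centroid \emph{path} $Q$ rather than at the single centroid, and one passes to a minimal subpath $Q_0$ of $Q$ retaining core capture; minimality plus Menger's theorem applied to the union of two longest cycles (which is $2$-connected by \Cref{lem:cycle_intersection}) yields two vertices $w_1,w_2$ whose subtrees contain \emph{all} of $Q_0$ (\Cref{lem:spanlongpath}). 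Containing all of $Q_0$ is the correct substitute for ``furthest reach'': it makes $w_1,w_2$ adjacent to every vertex whose subtree meets $Q_0$, so no surviving cycle has a core vertex on $Q_0$. The funneling of all survivors into a \emph{single} branch is then achieved by a separate device: among all paths whose endpoints attach to $Q_0$ and whose interiors have subtrees inside branches, take a longest one $R$; the glue-vertex exchange (\Cref{lem:general-two-way}, detouring through $w_1,R,w_2$) forces every survivor to meet $R$, and $R$'s endpoints $w_3,w_4$ complete the four-vertex set, leaving all remaining cycles with core vertices in the one branch containing $R$'s interior. (The path-cut also keeps every region a rooted subtree with a single boundary vertex, which the minimality/Menger argument needs; a single-vertex cut leaves a component with two boundary vertices. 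Your extremal-reach idea does work exactly when the remaining region is a path --- this is the refinement sketched after \Cref{thm:lct_bound}.)
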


In Section~\ref{sec:prelim}, we cover the necessary preliminaries to our results. In Section~\ref{sec:LCT}, we prove Theorem~\ref{thm:lct_bound} by an iterative divide-and-conquer strategy on the tree representation of $G$. Building on these techniques, in Section~\ref{sec:LPT} we prove Theorem~\ref{thm:lpt_bound}. In Section~\ref{sec:leafage}, we provide upper bounds for $\lpt(G)$ and $\lct(G)$ in terms of the leafage of $G$, a parameter introduced by Lin, McKee, and West \cite{LTW98}. As an application, we show the family of connected chordal graphs admitting a subdivided star tree representation is Gallai. In Section~\ref{sec:conclusion}, we conclude with some open questions.

\section{Preliminaries}\label{sec:prelim}

A \emph{tree representation} for a chordal graph $G$ is a \emph{host tree} $T$ along with a collection of subtrees ${\cal F}$ of $T$ such that the intersection graph on ${\cal F}$ is isomorphic to $G$. Recall that Gavril~\cite{GAVRIL74} characterized the chordal graphs as the graphs which admit a tree representation.  Although formally a tree representation is the intersection graph of subtrees of a host tree $T$, we also use $T$ to denote the tree representation.  For a vertex $u$ in a chordal graph $G$ and a tree representation $T$, we use $\subtree{u}$ to denote the subtree of $T$ corresponding to $u$.  When $H$ is a subgraph of $G$, we define $\subtree{H}$ to be the union, over all $u\in V(H)$, of $S(u)$.  Note that if $H$ is connected, then $\subtree{H}$ is a (connected) subtree of $T$. For each $x\in V(T)$, we define the \emph{bag} corresponding to $x$, denoted $\bag{x}$, to be the set $\{u\in V(G)\st x\in V(\subtree{u})\}$.  Note that for each $x\in V(T)$, the bag $\bag{x}$ is a clique in $G$.  Also, if $A$ is a clique in $G$, then the set $\{\subtree{u}\st u\in A\}$ is a family of pairwise intersecting subtrees of $T$.  Since subtrees of a tree have the Helly property, there is a vertex $x$ belonging to all of these subtrees, yielding $A\subseteq \bag{x}$.  Since $\bag{x}$ is always clique in $G$, it follows that if $A$ is a maximal clique in $G$, then $A=\bag{x}$ for some $x\in V(T)$.  In fact, as we note below, when $T$ is a minimal tree representation, the converse also holds and the bags in $T$ are exactly the maximal cliques in $G$.

A \emph{rooted tree} is a tree with a distinguished vertex $r$, called the \emph{root} of $T$.  If $T$ is a rooted tree with root $r$ and $u\in V(T)$, then the \emph{subtree of $T$ rooted at $u$} is the tree induced by the set of all vertices $w\in V(T)$ such that the $rw$-path in $T$ contains $u$.  Our convention is to use $X$ to name a rooted subtree of $T$ with root vertex $x$.  Often, our arguments produce a sequence of rooted subtrees $X_0, X_1,\ldots, X_t$ where $X_0$ is a tree representation of a chordal graph, each $X_i$ is a smaller rooted subtree of $X_{i-1}$ for $i\ge 1$, and the sequence ends with $X_t$ empty.

Let $T$ be a rooted tree, let $X$ be the subtree rooted at a vertex $x\in V(T)$, and let $Q$ be a subpath of $X$ with endpoint $x$.  For each $y\in V(Q)$, we define \emph{the descendants of $y$ in $X$ relative to $Q$}, denoted $\descendants{y}{Q}{X}$, to be the component of $X-E(Q)$ containing $y$.  For a subpath $Q_0$ of $Q$, we define $\descendants{Q_0}{Q}{X}$ to be the union, over $y\in V(Q_0)$, of $\descendants{y}{Q}{X}$.

A tree representation $T$ for a chordal graph $G$ is \emph{minimal} if there is no tree representation of $G$ with a host tree on fewer than $|V(T)|$ vertices.  If $T$ is a minimal tree representation for $G$, then for all distinct $x$ and $y$ in $V(T)$, we have $\bag{x}\not\subseteq \bag{y}$, as otherwise contracting the edge $e$ incident to $x$ along the $xy$-path simultaneously in $T$ and each subtree $\subtree{v}$ with $v\in V(G)$ and $e\in E(\subtree{v})$ yields a tree representation of $G$ with fewer vertices.  Note that if $\bag{x}$ were properly contained in a larger clique $A$ in $G$, then $A\subseteq \bag{y}$ for some $y\in V(T)$ and we have $\bag{x}\subsetneq A\subseteq \bag{y}$ for distinct $x,y\in V(T)$, contradicting the minimality of $T$. Conversely, for each tree representation $T$ of $G$, every clique in $G$ is contained in some bag of $T$ by the Helly property, and so the number of maximal cliques in $G$ is a lower bound on $|V(T)|$.  It follows that $T$ is minimal if and only if the bags of $T$ are the maximal cliques in $G$.  We often use the following property of minimal tree representations.
\begin{lemma}\label{lem:bigbags}
    Let $G$ be a connected chordal graph with minimal tree representation $T$ such that $|V(G)| \ge 2$.  We have $|\bag{y}| \ge 2$ for all $y \in V(T)$.
\end{lemma}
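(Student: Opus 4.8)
The plan is to rely on the characterization established immediately above the lemma statement: when $T$ is minimal, the bags of $T$ are precisely the maximal cliques of $G$. Consequently, each bag $\bag{y}$ with $y \in V(T)$ is a maximal clique, and the whole lemma reduces to the purely graph-theoretic fact that every maximal clique of a connected graph on at least two vertices has at least two elements. So the work is almost entirely done by the preliminaries, and what remains is to dispose of the two ``small'' cases $|\bag{y}| = 0$ and $|\bag{y}| = 1$.

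First I would rule out empty bags. Since $|V(G)| \ge 2$, the vertex set $V(G)$ is nonempty, so the empty set is not a maximal clique; by the characterization, no bag equals $\emptyset$. If one prefers an argument that does not route through the characterization, I would note that a vertex $y \in V(T)$ with $\bag{y} = \emptyset$ lies in no subtree, so contracting an edge of $T$ incident to $y$ leaves every subtree $\subtree{u}$ connected and preserves all of their pairwise intersections while shrinking the host tree, contradicting minimality.

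Next I would rule out singleton bags. Suppose $\bag{y} = \{u\}$ for some $u \in V(G)$. Because $\bag{y}$ is a maximal clique, $u$ can have no neighbor: any neighbor $v$ of $u$ would produce the clique $\{u,v\} \supsetneq \bag{y}$, contradicting maximality. Hence $u$ is isolated in $G$. But $G$ is connected with $|V(G)| \ge 2$, so $G$ has no isolated vertex, a contradiction. Combining the two cases yields $|\bag{y}| \ge 2$ for every $y \in V(T)$, as desired. No step here presents a genuine obstacle; the only point meriting care is the empty-bag case, and the sole external ingredient is the bag/maximal-clique characterization supplied by the preliminaries.
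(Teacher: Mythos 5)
Your proof is correct and takes essentially the same route as the paper: both invoke the characterization that the bags of a minimal tree representation are exactly the maximal cliques of $G$, and then observe that a maximal clique of size one would be an isolated vertex, contradicting connectivity together with $|V(G)| \ge 2$. Your explicit treatment of the empty-bag case is a minor (and harmless) addition of care that the paper leaves implicit.
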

\begin{proof}
    As we have observed, in a minimal tree representation, each bag in $T$ is a maximal clique in $G$.  If $T$ has a bag of size $1$, then $G$ has a maximal clique of size $1$, which must be an isolated vertex.  Since $G$ is connected, it follows that $|V(G)| = 1$.
\end{proof}

Given a subgraph $H$ of a chordal graph $G$ with tree representation $T$, the \emph{core} of $H$ is the union, over all $uv\in E(H)$, of $V(\subtree{u}\cap \subtree{v})$. Note that the core of $H$ is a subset of $V(T)$. A vertex $v \in V(T)$ is a \emph{core vertex} of $H$ if $v$ belongs to the core of $H$. A set $W \subseteq V(T)$ has the \emph{core capture property} with respect to a family of subgraphs $\HH$ of $G$ if each $H \in \HH$ has a core which intersects $W$.  For a subgraph $X$ of $T$, we say that $X$ has the \emph{core capture property} with respect to $\HH$ if $V(X)$ has the core capture property with respect to $\HH$.

\begin{lemma}\label{lem:boundary-fence}
Let $G$ be a chordal graph with a tree representation $T$. Let $X$ be a rooted subtree of $T$ with root $x$, and let $H$ be a subgraph of $G$ such that $H$ has a core vertex in $X$ but $x$ is not a core vertex of $H$.  There is a vertex $w\in V(H)$ such that $S(w)\subseteq V(X)-x$.
\end{lemma}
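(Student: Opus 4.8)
The plan is to extract an explicit witness vertex $w$ directly from the edge of $H$ that certifies the core vertex inside $X$. By hypothesis $H$ has a core vertex $c$ lying in $X$; since $x$ is \emph{not} a core vertex of $H$, we must have $c\neq x$, so in fact $c\in V(X)-x$. By the definition of the core, such a $c$ is witnessed by some edge $uv\in E(H)$ with $c\in V(\subtree{u}\cap\subtree{v})$. I claim that one of $u,v$ already serves as the desired $w$, so no global search over $V(H)$ is needed.

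The key structural fact I would isolate first is that, because $X$ is the rooted subtree of $T$ at $x$, the vertex $x$ separates $V(X)-x$ from $V(T)-V(X)$ in $T$ (intuitively, the only way out of $X$ is through $x$ to its parent). The operative consequence, which I would state as a one-line observation and then invoke, is that any connected subtree of $T$ meeting both $V(X)-x$ and $V(T)-V(X)$ must contain $x$. This form holds uniformly, including the degenerate case $x=r$ where $V(T)-V(X)=\emptyset$ and the condition is vacuous.

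Now, since $x$ is not a core vertex of $H$ while $uv\in E(H)$, we have $x\notin V(\subtree{u}\cap\subtree{v})$, so $x$ fails to lie in at least one of $\subtree{u},\subtree{v}$; relabel so that $x\notin V(\subtree{u})$. The subtree $\subtree{u}$ is connected and contains $c\in V(X)-x$. If $\subtree{u}$ met $V(T)-V(X)$, then by the separation observation it would have to contain $x$, contradicting the choice of $u$; hence $V(\subtree{u})\subseteq V(X)$. Combined with $x\notin V(\subtree{u})$, this gives $\subtree{u}\subseteq V(X)-x$, so $w=u$ works.

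The argument is short, and the only point requiring care—what I would flag as the crux rather than a genuine obstacle—is the separation property of $x$ in $T$ together with the fact that it pins down the behavior of the connected subtree $\subtree{u}$. I would also be careful to use only that $H$ has a core vertex inside $X$, and not that $H$ is connected or induced, since the single witnessing edge $uv$ supplies everything the proof needs.
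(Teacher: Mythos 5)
Your proof is correct and follows essentially the same route as the paper: both take the edge $uv\in E(H)$ witnessing the core vertex in $X$, observe that one endpoint's subtree must avoid $x$ (else $x$ would be a core vertex of $H$), and then use connectedness of that subtree together with the fact that $x$ separates $V(X)-x$ from the rest of $T$ to trap it inside $V(X)-x$. The paper phrases the final step as the subtree lying in a single component of $T-x$, which is the same separation observation you isolate.
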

\begin{proof}
    Let $y\in V(X)$ be a core vertex of $H$, and obtain $uv\in E(H)$ such that $y \in V(\subtree{u} \cap \subtree{v})$.  Note that for some $w\in\{u,v\}$, we have that $x\not\in V(\subtree{w})$, or else $x$ would also be a core vertex of $H$.  Since $\subtree{w}$ is a subtree of $T$, it must be that $\subtree{w}$ is contained in a component of $T-x$.  Since $y\in V(\subtree{w})$ and $y$ is in a component of $X-x$, it follows that $\subtree{w} \subseteq X-x$.
\end{proof}

  We frequently make use of Jordan's tree separator theorem \cite{J69}.

\begin{lemma}\label{lem:tree_separator}
    Let $T$ be a tree.  There exists a vertex $z \in V(T)$ such that each component of $T - z$ has at most $|V(T)|/2$ vertices.
\end{lemma}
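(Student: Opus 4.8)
The plan is to use a centroid-style minimization argument: I would choose $z$ to be a vertex that minimizes the size of the largest component of $T - z$, and then show that this minimizing choice already forces every component to have at most $|V(T)|/2$ vertices. Write $n = |V(T)|$, and for each vertex $v$ let $m(v)$ denote the maximum number of vertices among the components of $T - v$. Since $V(T)$ is finite, I may select $z$ so that $m(z)$ is as small as possible.

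First I would suppose toward a contradiction that some component $C$ of $T - z$ has more than $n/2$ vertices. Then necessarily $m(z) = |V(C)|$, since the components of $T - z$ partition the $n - 1$ vertices of $T - z$, so at most one of them can exceed $n/2$. Let $z'$ be the neighbor of $z$ lying in $C$; there is exactly one such neighbor because $C$ is connected and $T$ has no cycles. The key step is to compare the components of $T - z'$ with those of $T - z$.

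On one hand, deleting $z'$ severs $z$ from all of $C$ (the edge $zz'$ is the only link from $z$ into $C$), so the component of $T - z'$ containing $z$ consists of $z$ together with every component of $T - z$ other than $C$; this component therefore has exactly $n - |V(C)|$ vertices, which is strictly less than $n/2 < |V(C)|$. On the other hand, the remaining components of $T - z'$ are precisely the components of $C - z'$, each of which is a proper subtree of $C$ and hence has at most $|V(C)| - 1 < |V(C)|$ vertices. Combining the two cases, every component of $T - z'$ has fewer than $|V(C)|$ vertices, so $m(z') < |V(C)| = m(z)$, contradicting the minimality of $m(z)$. Hence no component of $T - z$ exceeds $n/2$, as desired.

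The argument is essentially a discrete descent, and I do not expect a genuine obstacle: finiteness of $V(T)$ guarantees a minimizer, and the local move from $z$ to $z'$ strictly decreases the integer quantity $m(\cdot)$. The only point that requires care is the bookkeeping in the key step, namely verifying that deleting $z'$ truly detaches all of $C$ from $z$ (so the $z$-side shrinks to $n - |V(C)|$ vertices) while splitting $C$ into pieces each of size at most $|V(C)| - 1$; getting these exact sizes right is what makes the contradiction work.
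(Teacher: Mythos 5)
Your proof is correct. Note that the paper itself does not prove this lemma at all: it is stated as Jordan's tree separator theorem with a citation to Jordan's 1869 paper, so there is no in-text argument to compare against. What you have written is the classical centroid argument that underlies that theorem: pick $z$ minimizing $m(z)$, the largest component size of $T-z$, and show that if some component $C$ of $T-z$ had more than $n/2$ vertices, then shifting to the unique neighbor $z'$ of $z$ inside $C$ strictly decreases $m$. Your bookkeeping is right on both sides of the key step: the component of $T-z'$ containing $z$ has exactly $n-|V(C)| < n/2 < |V(C)|$ vertices because $zz'$ is the only edge from $z$ into $C$, and every other component of $T-z'$ is a component of $C-z'$, hence has at most $|V(C)|-1$ vertices; together these give $m(z') < m(z)$, the desired contradiction. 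You also correctly justify the two small points that are easy to gloss over: that at most one component of $T-z$ can exceed $n/2$ (since the components partition only $n-1$ vertices), and that $z$ has exactly one neighbor in $C$ (acyclicity). The argument handles the degenerate case $n=1$ vacuously. In short, your proposal supplies a complete, self-contained proof of a statement the paper only cites, and it is the standard one.
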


A graph $G$ is \emph{$k$-connected} if $|V(G)|>k$ and $G-S$ is connected for all $S\subseteq V(G)$ with $|S| < k$.  The \emph{vertex connectivity} of $G$, denoted $\kappa(G)$, is the maximum $k$ such that $G$ is $k$-connected.  In a path $P$, the vertices of degree $2$ are \emph{interior} or \emph{internal} vertices and the vertices of degree less than $2$ are \emph{endpoints}.  Paths $P$ and $Q$ are \emph{internally disjoint} if their interior vertices are disjoint.  We use Menger's Theorem (see, e.g. \cite{W20}).

\begin{theorem}[Menger's Theorem]\label{thm:MengerSingle}
If $x$ and $y$ are distinct vertices in $G$, then $G$ has $\kappa(G)$ pairwise internally disjoint $xy$-paths.
\end{theorem}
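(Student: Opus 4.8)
The plan is to deduce this connectivity form of Menger's theorem from the classical local (separator) form, which I take as the workhorse: for \emph{non-adjacent} vertices $a$ and $b$, the maximum number of pairwise internally disjoint $ab$-paths equals the minimum size of a set $S \ss V(G) - \set{a,b}$ whose deletion leaves $a$ and $b$ in different components. Write $k = \kappa(G)$. I will also use the standard fact that $\delta(G) \ge \kappa(G)$: if some $v$ had $\deg_G(v) < k$, then its neighborhood (of size $<k$) would separate $v$ from the rest of $G$, contradicting $k$-connectivity. The proof then splits according to whether $x$ and $y$ are adjacent.

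If $x$ and $y$ are non-adjacent, every $xy$-separator $S$ satisfies $|S| \ge k$: were $|S| < k$, then $G - S$ would be connected by $k$-connectivity, contradicting that $S$ separates $x$ from $y$. Applying the local form to $x$ and $y$ therefore produces at least $k$ pairwise internally disjoint $xy$-paths, as required.

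The adjacent case is the main obstacle, since adjacent vertices admit no separator and the local form does not apply directly. My approach is to pass to $G' = G - xy$, in which $x$ and $y$ are non-adjacent, extract $k-1$ internally disjoint $xy$-paths there, and then restore the edge $xy$ as a $k$-th path; since that path has no interior vertices, it is internally disjoint from all the others. Everything reduces to the claim that every $xy$-separator $S$ in $G'$ has $|S| \ge k-1$. Suppose instead $|S| \le k-2$. Consider $G - S$: since $|S| < k$ it is connected, yet in $(G - S) - xy$ the vertices $x$ and $y$ lie in distinct components $C_x \ni x$ and $C_y \ni y$, whose only connecting edge is $xy$. If $C_x = \set{x}$, then all neighbors of $x$ lie in $S \cup \set{y}$, so $\deg_G(x) \le |S| + 1 \le k-1 < k$, contradicting $\delta(G) \ge k$; the case $C_y = \set{y}$ is symmetric. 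Otherwise each side contains a vertex distinct from $x$ and $y$, and deleting $S \cup \set{x}$ (a set of size at most $k-1$) removes the unique cross edge $xy$, leaving some vertex of $C_x - x$ separated from $y$ in $G - (S \cup \set{x})$, again contradicting $k$-connectivity.

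The only delicate point is the case analysis of the preceding paragraph; the non-adjacent case and the degree bound are routine, and the passage from $k-1$ paths in $G'$ to $k$ paths in $G$ is immediate. I would finally sanity-check the boundary situations where $|V(G)|$ is small relative to $k$ (complete graphs, for instance), where the admissible $xy$-paths include the single-edge path and the counting still goes through since $k$-connectivity guarantees $|V(G)| > k$.
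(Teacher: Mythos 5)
Your proof is correct, but there is nothing in the paper to compare it against: the paper does not prove this statement at all. Menger's theorem is quoted there as a classical result with a pointer to the literature (``see, e.g.~\cite{W20}'') and is then used as a black box, for instance in the derivation of \Cref{thm:MengerSets} and in \Cref{lem:spanvertex}. What you have written is the standard textbook derivation of the global form from the local (separator) form. The non-adjacent case is immediate, since $k$-connectivity forces every $xy$-separator to have size at least $k = \kappa(G)$. In the adjacent case you pass to $G' = G - xy$, show that every $xy$-separator $S$ of $G'$ satisfies $|S| \ge k-1$, and restore the edge $xy$ as a $k$-th path, which is internally disjoint from the rest because its interior is empty. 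Your case analysis for the claim $|S| \ge k-1$ is sound: if $C_x = \{x\}$, then $\deg_G(x) \le |S| + 1 \le k-1$ contradicts $\delta(G) \ge \kappa(G)$; otherwise $S \cup \{x\}$ has size at most $k-1$ yet disconnects the nonempty set $C_x - x$ from $y$, because the only edge of $G - S$ joining $C_x$ to $C_y$ is $xy$, which is incident to the deleted vertex $x$. This is essentially the argument in standard references (West, Diestel), so your proof is a legitimate substitute for the paper's citation; it buys self-containedness, at the cost of assuming the local form of Menger's theorem, which is itself the nontrivial core of the result.
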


For subsets $S,T \subseteq V(G)$, an $(S,T)$-path is a path with one endpoint in $S$ and the other endpoint in $T$ with internal vertices outside $S \cup T$.  Adding new vertices $s$ and $t$ to $G$ with $N(s) = S$ and $N(t)=T$ produces a new graph $G'$ with connectivity at least $\min\{|S|,|T|,\kappa(G)\}$.  Applying \Cref{thm:MengerSingle} to $G'$ and $s,t$, and shortening paths with internal vertices in $S\cup T$ gives the following well known consequence of Menger's Theorem.

\begin{theorem}[Menger's Theorem]\label{thm:MengerSets}
    Let $G$ be a graph. For all sets $S,T \subseteq V(G)$, there exists $\min\{|S|,|T|,\kappa(G)\}$ pairwise disjoint $(S,T)$-paths. 
\end{theorem}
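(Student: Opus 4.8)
The plan is to reduce the set version to the two-vertex version in \Cref{thm:MengerSingle}, exactly along the lines sketched in the discussion preceding the statement. Set $k=\min\set{|S|,|T|,\kappa(G)}$. First I would construct the auxiliary graph $G'$ from $G$ by adjoining two new vertices $s$ and $t$ with $N(s)=S$ and $N(t)=T$. Every $st$-path in $G'$ then passes through a vertex of $S$ immediately after $s$ and a vertex of $T$ immediately before $t$, so after deleting $s,t$ it yields an $(S,T)$-path, and conversely any $(S,T)$-path extends to an $st$-path. The entire argument rests on the claim $\kappa(G')\ge k$: granting this, \Cref{thm:MengerSingle} applied to $s$ and $t$ produces $\kappa(G')\ge k$ pairwise internally disjoint $st$-paths, and it remains only to turn these into genuine pairwise disjoint $(S,T)$-paths.

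To prove $\kappa(G')\ge k$ (we may assume $k\ge 1$, the case $k=0$ being vacuous, and note $|V(G')|=|V(G)|+2>k$), I would fix a minimum vertex cut $U$ of $G'$ and show $|U|\ge k$. If $s\in U$, then $U-s\subseteq V(G)$ disconnects $G$ with the vertex $t$ attached to $T$; this forces either a separation of $G$ itself or $T\subseteq U-s$, so $|U|\ge \min\set{|T|,\kappa(G)}+1>k$, and symmetrically if $t\in U$. So assume $s,t\notin U$. If $U$ separates $s$ from $t$ in $G'$, then no vertex of $S-U$ reaches any vertex of $T-U$ in $G-U$; if $S\subseteq U$ or $T\subseteq U$ we get $|U|\ge\min\set{|S|,|T|}\ge k$, and otherwise there are $p\in S-U$ and $q\in T-U$ separated by $U$ in $G$. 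Such $p$ and $q$ are distinct and nonadjacent, since a common vertex or an edge $pq$ would give an $st$-path through $\set{p,q}$ avoiding $U$; hence $|U|\ge\kappa(G)\ge k$. Finally, if $U$ does not separate $s$ from $t$, then some component $C$ of $G'-U$ avoids both $s$ and $t$, and since $s,t$ have no neighbors in $C$ the set $C$ is a component of $G-(U\cap V(G))$ disjoint from $S\cup T$; thus $U\cap V(G)$ separates $G$ and again $|U|\ge\kappa(G)\ge k$.

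With the $k$ internally disjoint $st$-paths in hand, I would extract the $(S,T)$-paths by a routine truncation: each path $P$ has the form $s,p_1,\dots,p_m,t$ with $p_1\in S$ and $p_m\in T$, and replacing $P$ by the segment from its last vertex in $S$ to the first subsequent vertex in $T$ produces a path with one end in $S$, the other in $T$, and all interior vertices outside $S\cup T$, i.e.\ an $(S,T)$-path (when $S\cap T\neq\emptyset$ some of these degenerate to single vertices, which is permitted). Because the original $st$-paths shared only $s$ and $t$, the truncated paths are pairwise vertex-disjoint, yielding the required $k$ disjoint $(S,T)$-paths. I expect the connectivity estimate $\kappa(G')\ge k$ to be the only real obstacle; the truncation is bookkeeping, but the case analysis above is where one must be careful to use the distinctness and nonadjacency of the separated pair and to keep track of whether the terminals lie in the cut.
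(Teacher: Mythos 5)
Your proof is correct and follows exactly the argument the paper sketches in the paragraph preceding \Cref{thm:MengerSets}: form $G'$ by adding $s,t$ with $N(s)=S$, $N(t)=T$, verify $\kappa(G')\ge\min\{|S|,|T|,\kappa(G)\}$, apply \Cref{thm:MengerSingle} to $s,t$, and shorten the resulting paths to $(S,T)$-paths. The only difference is that you carefully prove the connectivity bound on $G'$ (which the paper merely asserts), and your case analysis there is sound.
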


\section{Longest Cycle Transversals}\label{sec:LCT}

In this section, we construct longest cycle transversals of size at most $O(\log n)$ in $2$-connected chordal graphs.  We first prove a folklore lemma.

\begin{lemma}\label{lem:cycle_intersection}
    If $G$ is 2-connected graph and $C_1$ and $C_2$ are longest cycles in $G$, then $C_1 \cup C_2$ is $2$-connected.  In particular, $|V(C_1)\cap V(C_2)|\ge 2$.
\end{lemma}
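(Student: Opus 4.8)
The plan is to argue by contradiction: assuming $C_1 \cup C_2$ is not $2$-connected, I will build a cycle strictly longer than $C_1$ and $C_2$, contradicting their maximality. Write $H = C_1 \cup C_2$ and let $L$ be the common number of edges of $C_1$ and $C_2$. Since each cycle has at least three vertices, $H$ has at least three vertices, so failing to be $2$-connected means $H$ is either disconnected or has a cut vertex. Because $C_1$ and $C_2$ are individually connected, $H$ is disconnected only if $V(C_1) \cap V(C_2) = \emptyset$; and since the union of two connected graphs sharing a vertex is connected, $H - v$ can be disconnected only if $C_1 - v$ and $C_2 - v$ are vertex-disjoint, which (as $H$ is connected) forces $V(C_1) \cap V(C_2) = \{v\}$. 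So it suffices to rule out two situations: (i) $C_1$ and $C_2$ are disjoint, and (ii) they meet in exactly one vertex $v$. The basic tool in both is that for any two vertices $s,t$ on a cycle $C$, the two $s$--$t$ arcs of $C$ partition its edges, so the longer arc has at least half the edges of $C$.

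In case (i), I would apply the sets version of Menger's theorem (\Cref{thm:MengerSets}) to $S = V(C_1)$ and $T = V(C_2)$: since $|V(C_i)| \ge 3$ and $\kappa(G) \ge 2$, there are two disjoint $(V(C_1),V(C_2))$-paths $P_1, P_2$, say with distinct endpoints $a_1,a_2 \in V(C_1)$ and $b_1, b_2 \in V(C_2)$ and internal vertices off both cycles. Concatenating the longer $a_1$--$a_2$ arc of $C_1$, the path $P_2$, the longer $b_2$--$b_1$ arc of $C_2$, and the path $P_1$ yields a cycle (the pieces are internally disjoint) of length at least $L/2 + 1 + L/2 + 1 = L + 2$, a contradiction. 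In case (ii), $v$ lies on both cycles and $G - v$ is connected because $G$ is $2$-connected; choosing a shortest path $R$ in $G - v$ from $V(C_1) - v$ to $V(C_2) - v$, with endpoints $p \in V(C_1)$ and $q \in V(C_2)$ and internal vertices off both cycles, I concatenate the longer $v$--$p$ arc of $C_1$, the path $R$, and the longer $q$--$v$ arc of $C_2$. The two arcs meet only at $v$ (as $V(C_1) \cap V(C_2) = \{v\}$) and $R$ is internally disjoint from both, so this is a cycle of length at least $L/2 + 1 + L/2 = L+1$, again a contradiction.

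These contradictions show $H = C_1 \cup C_2$ is $2$-connected. The ``in particular'' statement then follows immediately: if $|V(C_1) \cap V(C_2)| \le 1$, then $H$ would be disconnected or would have a cut vertex, neither of which is possible for a $2$-connected graph, so $|V(C_1) \cap V(C_2)| \ge 2$. I expect the main obstacle to be bookkeeping rather than a deep idea: one must verify that each concatenation really is a cycle (the arcs, the connecting path or paths, and their endpoints must be genuinely disjoint except where intended) and then confirm the length is strictly larger than $L$. The arc-halving bound combined with the at-least-one-edge contribution of each connecting path is exactly what pushes the new cycle past length $L$, and getting the endpoint identifications right in case (i) is the fiddliest part.
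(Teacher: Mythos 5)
Your proof is correct and takes essentially the same approach as the paper: both arguments rule out an empty intersection by applying \Cref{thm:MengerSets} to $(V(C_1),V(C_2))$ and rule out a single shared vertex $v$ by using connectivity of $G-v$, in each case splicing connecting paths with the longer arcs of $C_1$ and $C_2$ to build a longer cycle. The only organizational difference is that you deduce $2$-connectedness of $C_1\cup C_2$ first (by showing any failure of it forces one of the two bad cases) and read off $|V(C_1)\cap V(C_2)|\ge 2$ as a consequence, whereas the paper establishes the intersection bound first and then cites the standard fact that a union of two $2$-connected graphs sharing at least two vertices is $2$-connected.
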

\begin{proof}
    Let $H=C_1\cup C_2$.  Note that $V(C_1)$ and $V(C_2)$ have nonempty intersection or else applying \Cref{thm:MengerSets} with $(S,T)=(V(C_1), V(C_2))$ gives two disjoint paths in $G$ joining $C_1$ and $C_2$ and hence a longer cycle.  We show that if $V(C_1) \cap V(C_2)$ contains a single vertex $z$, then $G$ has a longer cycle.  Since $G-z$ is connected, there is a path $P$ in $G$ joining $V(C_1 - z)$ and $V(C_2 - z)$.  Let $x$ be the endpoint of $P$ in $C_1 - z$ and let $y$ be the endpoint of $P$ in $C_2 - z$.  We obtain a longer cycle by combining $P$ with the longer $xz$-subpath of $C_1$ and the longer $yz$-subpath of $C_2$.  Therefore $|V(C_1) \cap V(C_2)| \ge 2$.  Since $C_1$ and $C_2$ are $2$-connected, it follows that $H$ is also $2$-connected.  
\end{proof}

Let $G$ be a chordal graph with tree representation $T$, and let $P$ be a path in $T$.  The components of $T-E(P)$ containing an endpoint of $P$ are \emph{exterior components}, and the components containing an interior vertex of $P$ are \emph{interior components}.  Note that if $P$ has no interior vertices, then $T-E(P)$ has no interior components and either one or two exterior components according to whether $|V(P)| = 1$ or $|V(P)| = 2$.

\begin{lemma}\label{lem:spanvertex}
Let $G$ be a chordal graph with a tree representation $T$, let $x$ and $y$ be distinct vertices in $T$, and let $P$ be the $xy$-path in $T$.  If $G$ has a subgraph $H$ such that $H$ has a core vertex in each exterior component of $T-E(P)$ but no core vertex in any interior component, then $H$ contains $\kappa(H)$ vertices $v$ such that $S(v)$ contains $P$. 
\end{lemma}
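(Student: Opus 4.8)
The plan is to identify the set $W$ of vertices $v\in V(H)$ with $P\ss \subtree{v}$ and to show that $W$ is a vertex cut of $H$ of size at least $\kappa(H)$. The first observation is that, since $\subtree{v}$ is a subtree of $T$ and $P$ is the unique $xy$-path in $T$, we have $P\ss \subtree{v}$ precisely when $\subtree{v}$ meets both exterior components. Writing $A$ and $B$ for the exterior components containing $x$ and $y$, I call $v$ \emph{spanning} (equivalently $v\in W$) when $\subtree{v}$ meets both $A$ and $B$; I call $v$ an \emph{$A$-vertex} if $\subtree v$ meets $A$ but not $B$, and a \emph{$B$-vertex} symmetrically. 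If $\kappa(H)=0$ there is nothing to prove, so I assume $H$ is connected with at least two vertices.

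First I would rule out vertices meeting neither $A$ nor $B$. If $\subtree{v}$ meets neither exterior component, then $\subtree{v}$ lies entirely within the interior components, so for any neighbor $u$ of $v$ the nonempty set $V(\subtree{u}\cap\subtree{v})$ lies in the interior components, placing a core vertex of $H$ in an interior component and contradicting the hypothesis; hence $v$ is isolated, which is impossible in a connected graph on at least two vertices. Thus every vertex meets $A$ or $B$. The same bookkeeping shows there is no edge of $H$ joining an $A$-vertex $u$ to a $B$-vertex $v$: a vertex of $V(\subtree{u}\cap\subtree{v})$ can lie neither in $A$ (else $v$ would meet $A$) nor in $B$ (else $u$ would meet $B$), and so would lie in an interior component. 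Consequently the $A$-vertices, $B$-vertices, and $W$ partition $V(H)$ with no edge between the $A$-vertices and the $B$-vertices, so $W$ separates them. Once both classes are known to be nonempty, applying \Cref{thm:MengerSingle} to an $A$-vertex and a $B$-vertex yields $\kappa(H)$ internally disjoint paths, each of which (having its endpoints outside $W$) meets $W$ in an internal vertex, and these vertices are distinct; hence $|W|\ge\kappa(H)$, as desired.

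The main obstacle is guaranteeing that neither class of $W$-free vertices is empty, and this is exactly where the ``no interior core'' hypothesis earns its keep a second time. A core vertex of $H$ in $A$ arises from an edge $uu'\in E(H)$ with $V(\subtree{u}\cap\subtree{u'})$ meeting $A$, so both $u$ and $u'$ meet $A$; if there were no $A$-vertices, then $u,u'\in W$ and $uu'$ would be an edge inside $W$. But whenever $P$ has an interior vertex $p$, every spanning vertex contains $p$ in its subtree, so an edge between two spanning vertices would place the core vertex $p$ in an interior component --- a contradiction showing $W$ is independent. This forces the $A$-vertices to be nonempty, and symmetrically the $B$-vertices. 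The delicate remaining point, which I would treat separately, is the case in which $P$ has no interior vertex (so $x$ and $y$ are adjacent in $T$ and $A\cup B=V(T)$): here the interior-component hypothesis is vacuous and $W$ need not be independent, so the nonemptiness of both classes must instead be extracted directly from the two exterior core edges. The crux throughout is the interior-component hypothesis, used first to eliminate stray vertices and cross edges and then, via the independence of $W$, to prevent either side of the separator from collapsing.
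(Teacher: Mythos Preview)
Your argument is essentially the paper's, repackaged as a separator statement. The paper directly extracts $u_x\in V(H)$ with $\subtree{u_x}\subseteq T_x$ from the core edge $e_x$ in $T_x$ (this is your $A$-vertex), symmetrically obtains $u_y$, applies \Cref{thm:MengerSingle} to get $\kappa(H)$ internally disjoint $u_xu_y$-paths, and shows each contains a spanning vertex via the same ``otherwise some edge of the path has its core in an interior component'' reasoning you use to forbid $A$--$B$ edges. Your detour through ``$W$ is independent, hence the core edge in $A$ cannot have both endpoints in $W$'' is logically equivalent to the paper's extraction of $u_x$ from $e_x$. For $|V(P)|\ge 3$ your proof is correct and matches the paper's.

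You are right to flag $|V(P)|=2$ as delicate, but your proposed patch---extracting an $A$-vertex and a $B$-vertex ``directly from the two exterior core edges''---cannot work in general, because the lemma as stated is \emph{false} in that case. Take $T$ to be the single edge $xy$ (so $T_x=\{x\}$, $T_y=\{y\}$, and there are no interior components) and let $H=K_5$ on $v_1,v_2,w_1,w_2,w_3$ with $\subtree{v_i}=\{x,y\}$ and $\subtree{w_j}=\{y\}$; the edge $v_1v_2$ supplies a core vertex in $T_x$ and every edge supplies one in $T_y$, yet only $v_1,v_2$ span $P$ while $\kappa(H)=4$. The paper's proof has the identical gap: its assertion that some endpoint of $e_x$ has subtree contained in $T_x$ relies on the no-interior-core hypothesis, which is vacuous here and fails for $e_x=v_1v_2$. (The paper's downstream uses only ever need two spanning vertices, and one can check $|W|\ge 2$ whenever the hypotheses hold, so the main results are unaffected; but neither argument establishes the lemma as stated when $|V(P)|=2$.)
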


\begin{proof}
Let $T_x$ and $T_y$ be the exterior components of $T-E(P)$ containing $x$ and $y$ respectively.  Let $e_x$ and $e_y$ be edges in $H$ having a core vertex in $T_x$ and $T_y$ respectively.  Both endpoints of $e_x$ have subtrees intersecting $T_x$, and since $H$ has no core vertex in an interior component of $T-E(P)$, at least one of these endpoints $u_x$ has a subtree $S(u_x)$ that is contained in $T_x$.  Similarly, let $u_y$ be an endpoint of $e_y$ with $S(u_y)\subseteq T_y$.  Let $k=\kappa(H)$.  By \Cref{thm:MengerSingle}, it follows that $H$ has $k$ internally disjoint $u_xu_y$-paths $Q_1,\ldots,Q_k$.

We claim that each $Q_i$ contains a vertex $v_i$ with $P\subseteq S(v_i)$.  If not, then each vertex $v$ in $Q_i$ has a subtree $S(v)$ that is either disjoint from $T_y$ or disjoint from $T_x$. Since $Q_i$ has endpoints $u_x$ and $u_y$ which are disjoint from $T_y$ and $T_x$ respectively, it follows that $Q_i$ has adjacent vertices $ww'$ such that $S(w)$ is disjoint from $T_y$ and $S(w')$ is disjoint from $T_x$.  It follows that $S(w)\cap S(w')$ is disjoint from $T_x \cup T_y$, and so $S(w)\cap S(w')$ has all its vertices contained in the interior components of $T-E(P)$, contradicting that the core of $H$ is disjoint from those components.
\end{proof}

A \emph{rooted tree representation} of a chordal graph $G$ is a tree representation $T$ such that $T$ is a rooted tree.  Let $G$ be a chordal graph with a rooted tree representation $T$, let $X$ be the subtree of $T$ rooted at $x$, and suppose that $X$ has the core capture property for a collection $\HH$ of longest paths or longest cycles in $G$.  We often make progress by finding a proper subtree $X'$ of $X$ and a small set of vertices $A$ in $G$ such that for each $H\in\HH$, either $H$ intersects $A$ or $H$ has a core vertex in $X'$.  Let $z\in V(X)$ and let $Q$ be the $xz$-path in $X$.  Recall that $\descendants{Q}{Q}{X}$ equals the subgraph $X-E(Q)$, and therefore $X$ and $\descendants{Q}{Q}{X}$ have the same vertex set.  It follows that $\descendants{Q}{Q}{X}$ also has the core capture property for $\HH$.  It turns out that looking at a minimal subpath $Q_0$ of $Q$ such that $\descendants{Q_0}{Q}{X}$ has the core capture property for $\HH$ is useful in constructing $A$.

\begin{lemma}\label{lem:spanlongpath}
    Let $G$ be a chordal graph with rooted tree representation $T$, let $X$ be a subtree of $T$ with root $x$, and let $Q$ be a subpath of $X$ such that $x$ is an endpoint of $Q$.  Let $\HH$ be a nonempty family of subgraphs of $G$ such that $X$ has the core capture property for $\HH$ and let $k=\min\{\kappa(H_1\cup H_2)\st H_1,H_2\in\HH\}$.  Let $Q_0$ be a minimal subpath of $Q$ such that $\descendants{Q_0}{Q}{X}$ has the core capture property for $\HH$.  If $|V(Q_0)|\ge 2$, then $G$ has $k$ vertices $v$ such that $S(v)$ contains $Q_0$.
\end{lemma}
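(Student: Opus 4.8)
The plan is to produce two members $H_a, H_b \in \HH$ from the minimality of $Q_0$ and then apply \Cref{lem:spanvertex} to the subgraph $H = H_a \cup H_b$ with the path $P = Q_0$. Write $a$ and $b$ for the two endpoints of $Q_0$, labeled so that $a$ lies between $x$ and $b$ along $Q$. Since $|V(Q_0)| \ge 2$, deleting the endpoint $a$ yields a strictly shorter subpath $Q_0 - a$ of $Q$; by the minimality of $Q_0$, the set $\descendants{Q_0-a}{Q}{X}$ fails the core capture property, so there is some $H_b \in \HH$ with no core vertex in $\descendants{Q_0-a}{Q}{X}$. As $\descendants{Q_0}{Q}{X}$ does capture the core of $H_b$ and $\descendants{Q_0}{Q}{X} = \descendants{a}{Q}{X} \cup \descendants{Q_0-a}{Q}{X}$, every core vertex of $H_b$ in $\descendants{Q_0}{Q}{X}$ must lie in $\descendants{a}{Q}{X}$. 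Symmetrically, deleting $b$ produces $H_a \in \HH$ whose core vertices in $\descendants{Q_0}{Q}{X}$ all lie in $\descendants{b}{Q}{X}$, with no core vertex in any branch $\descendants{y}{Q}{X}$ for $y \ne b$.

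Next I would identify the components of $T - E(Q_0)$ with the branches hanging off $Q_0$. Because $X$ is rooted at $x$ and $x$ is an endpoint of $Q$, the path $Q$ descends from the root, so consecutive vertices of $Q$ are in a parent--child relation and each vertex $y$ of $Q_0$ sits in its own component $\descendants{y}{Q}{X}$ of $X - E(Q)$. The key observation is that for an \emph{interior} vertex $y$ of $Q_0$, the component of the full host tree $T - E(Q_0)$ containing $y$ is exactly $\descendants{y}{Q}{X}$: from $y$ one cannot reach the rest of $Q$ without crossing a deleted edge of $Q_0$, and one cannot leave $X$ without first passing through $a$ and then the root $x$, again crossing $Q_0$. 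Thus the interior components of $T - E(Q_0)$ are precisely the branches $\descendants{y}{Q}{X}$ with $y$ interior to $Q_0$, while the two exterior components (those containing $a$ and $b$) contain $\descendants{a}{Q}{X}$ and $\descendants{b}{Q}{X}$ respectively.

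With this correspondence in hand, set $H = H_a \cup H_b$. Since the core of each of $H_a$ and $H_b$ is contained in the core of $H$, the exterior component at $a$ contains the core vertex of $H_b$ guaranteed in $\descendants{a}{Q}{X}$, and the exterior component at $b$ contains the core vertex of $H_a$ in $\descendants{b}{Q}{X}$; hence $H$ has a core vertex in each exterior component. For the interior components, neither $H_a$ nor $H_b$ has a core vertex in any branch $\descendants{y}{Q}{X}$ with $y$ interior to $Q_0$, since every such $y$ differs from both $a$ and $b$; so $H$ has no core vertex in any interior component. Applying \Cref{lem:spanvertex} with $P = Q_0$ then yields $\kappa(H)$ vertices $v$ with $\subtree{v}$ containing $Q_0$, and since $H_a, H_b \in \HH$ we have $\kappa(H) = \kappa(H_a \cup H_b) \ge k$, giving the desired $k$ vertices.

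I expect the main obstacle to be the second paragraph: justifying cleanly that the interior components of $T - E(Q_0)$, computed in the entire host tree $T$, coincide with the branches $\descendants{y}{Q}{X}$ computed inside the rooted subtree $X$, and that $\descendants{a}{Q}{X}$ and $\descendants{b}{Q}{X}$ are confined to the two exterior components. Once the rootedness of $Q$ is exploited to pin down this correspondence, the minimality argument and the appeal to \Cref{lem:spanvertex} are routine.
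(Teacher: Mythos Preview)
Your proposal is correct and follows essentially the same approach as the paper: use minimality of $Q_0$ at each endpoint to produce $H_a,H_b\in\HH$ whose cores avoid the interior branches of $Q_0$, then apply \Cref{lem:spanvertex} to $H=H_a\cup H_b$. Your second paragraph, spelling out why the interior components of $T-E(Q_0)$ coincide with the branches $\descendants{y}{Q}{X}$ for interior $y$ while the exterior components contain $\descendants{a}{Q}{X}$ and $\descendants{b}{Q}{X}$, is in fact more careful than the paper's one-line assertion; the only cosmetic slip is the phrase ``no core vertex in any branch $\descendants{y}{Q}{X}$ for $y\ne b$,'' which should read $y\in V(Q_0)\setminus\{b\}$, but this does not affect the argument.
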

\begin{proof}
Let $y_1$ and $y_2$ be the endpoints of $Q_0$.  By minimality of $Q_0$, for $i\in \{1,2\}$, there exists $H_i\in \HH$ such that $H_i$ has a core vertex in $\descendants{y_i}{Q}{X}$ but no core vertex in $\descendants{Q_0 - y_i}{Q}{X}$. Let $H=H_1 \cup H_2$ and note that $H$ has a core vertex in each exterior component of $X-E(Q_0)$ but no core vertex in an interior component of $T-E(Q_0)$.  It follows from~\Cref{lem:spanvertex} that $G$ has $\kappa(H)$ vertices $v$ such that $S(v)$ contains $Q_0$, and $\kappa(H)\ge k$.
\end{proof}

In our lemma below, we make use of a pair of vertices $W$ which have appropriate neighbors for detouring segments of certain paths to obtain longer paths.  For convenience, we refer to the vertices in $W$ as \emph{glue vertices}.  An \emph{attachment point} is a vertex $u\in V(G)$ such that $W\subseteq N(u)$. 

\begin{lemma}\label{lem:general-two-way}
    Let $G$ be a graph, let $W \subseteq V(G)$ be a pair of  vertices, and let $\HH$ be a family of subgraphs of $G$, where each $H\in\HH$ is a longest path or longest cycle in $G$ and $V(H)\cap W=\emptyset$.  Let $\RR$ be a nonempty family of paths in $G$ such that each $R\in \RR$ is disjoint from $W$ and the endpoints of $R$ are distinct attachment points in $G$.  If each $H\in \HH$ contains a subpath in $\RR$, then each longest path in $\RR$ intersects every $H$ in $\HH$.
\end{lemma}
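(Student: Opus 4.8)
The plan is to argue by contradiction. Let $R^*$ be a longest path in $\RR$, fix $H\in\HH$, and suppose toward a contradiction that $V(R^*)\cap V(H)=\emptyset$. By hypothesis $H$ contains some subpath $R\in\RR$, and since $R^*$ is a longest path in $\RR$ we have $|V(R^*)|\ge|V(R)|$. Write $W=\set{w_1,w_2}$, let $a,b$ be the endpoints of $R$, and let $a^*,b^*$ be the endpoints of $R^*$. All four of these vertices are attachment points, so each is adjacent to both $w_1$ and $w_2$. The idea is to splice $R^*$ into $H$ in place of $R$, using $w_1$ and $w_2$ as the two glue connections: we join $a$ to $a^*$ through $w_1$ and $b$ to $b^*$ through $w_2$, producing a strictly longer path or cycle in $G$ and contradicting the maximality of $H$.

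First I would handle the case that $H$ is a longest path. Since $R$ is a subpath, I can write $H$ as $A\cdot R\cdot B$, where $A$ is the portion of $H$ ending at $a$ and $B$ is the portion starting at $b$ (either may be a single vertex). I then form the walk that traverses $A$, crosses the edges $aw_1$ and $w_1a^*$, traverses $R^*$ from $a^*$ to $b^*$, crosses $b^*w_2$ and $w_2b$, and finishes along $B$. The crucial point is that this walk is in fact a simple path: the pieces $A$ and $B$ (contained in $V(H)$), $R^*$, and $\set{w_1,w_2}$ are pairwise vertex-disjoint because $V(H)\cap W=\emptyset$, each member of $\RR$ is disjoint from $W$ (so in particular $V(R^*)\cap W=\emptyset$), and by assumption $V(R^*)\cap V(H)=\emptyset$. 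Counting vertices, replacing the interior of $R$ by $w_1$, $R^*$, and $w_2$ yields a path with exactly $|V(R^*)|-|V(R)|+4\ge 4$ more vertices than $H$, the desired contradiction.

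The case that $H$ is a longest cycle is analogous: deleting the interior of $R$ from $H$ leaves the complementary arc $B$ from $b$ to $a$ (which still contains $a$ and $b$), and I close the cycle by routing from $b$ through $w_2$ and $b^*$, along $R^*$ to $a^*$, and through $w_1$ back to $a$. The same three disjointness facts guarantee that this object is a genuine single cycle rather than two, and the same bookkeeping shows it has at least $4$ more vertices than $H$. I expect the only real care needed is the verification that the spliced object is simple (and, in the cycle case, connected into one cycle); this is exactly where the hypotheses $V(H)\cap W=\emptyset$, $V(R)\cap W=\emptyset$ for $R\in\RR$, and the contradiction hypothesis $V(R^*)\cap V(H)=\emptyset$ are each invoked, while the vertex counts themselves are routine.
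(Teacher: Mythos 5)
Your proof is correct and is essentially the paper's own argument: both take a longest path in $\RR$, assume it misses some $H\in\HH$, and splice it into $H$ in place of $H$'s subpath from $\RR$ via the two glue vertices, contradicting the maximality of $H$. The only difference is that you spell out the path/cycle case split, the simplicity check, and the vertex count, which the paper leaves implicit in its one-line replacement $xw_1Rw_2y$.
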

\begin{proof}
Let $R$ be a longest path in $\RR$, and suppose for a contradiction that $R$ is disjoint from some $H\in\HH$.  Let $R_0$ be a subpath of $H$ in $\RR$, and let $x$ and $y$ be the endpoints of $R_0$.  We have $|V(R_0)| \le |V(R)|$.  Let $W=\{w_1,w_2\}$.  We modify $H$ to obtain a longer path or cycle in $G$ by replacing the subpath $R_0$ with the path $xw_1Rw_2y$. 
\end{proof}

We are now ready to prove the main technical lemma for finding a small longest cycle transversal in $2$-connected chordal graphs.

\begin{lemma}\label{lem:cycle-trans}
    Let $G$ be a $2$-connected chordal graph with minimal rooted tree representation $T$.  Let $X$ be a subtree of $T$ rooted at $x$, let $z\in V(X)$, and let $Q$ be the $xz$-path in $X$.  Let $\CC$ be a family of longest cycles in $G$ such that $X$ has the core capture property for $\CC$.  There is a set $A \subseteq V(G)$ with $|A| \le 4$ such that the family $\CC'$ of cycles in $\CC$ that are disjoint from $A$ is either empty, or there is a component $X'$ of $X - V(Q)$ such that $X'$ has the core capture property for $\CC'$.
\end{lemma}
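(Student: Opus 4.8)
The plan is to first extract, from the path $Q$, a pair of vertices whose subtrees contain a critical segment. Let $Q_0$ be the minimal subpath of $Q$ such that $\descendants{Q_0}{Q}{X}$ retains the core capture property for $\CC$; this exists since $\descendants{Q}{Q}{X}=X-E(Q)$ has the same vertex set as $X$, and hence inherits the property from $X$. Let $y_1,y_2$ be the endpoints of $Q_0$. When $|V(Q_0)|\ge 2$ I would apply \Cref{lem:spanlongpath}: since $C_1\cup C_2$ is $2$-connected for any two longest cycles by \Cref{lem:cycle_intersection}, the parameter $k$ there is at least $2$, so $G$ has two vertices $v_1,v_2$ with $Q_0\ss \subtree{v_1}\cap\subtree{v_2}$. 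When $|V(Q_0)|=1$, say $Q_0=\{y\}$, there are no span vertices to harvest, so I would instead use \Cref{lem:bigbags} to get $|\bag{y}|\ge 2$ and take any two vertices of the clique $\bag{y}$, whose subtrees each contain $y=Q_0$. Either way I obtain adjacent $v_1,v_2$ with $Q_0\ss\subtree{v_1}\cap\subtree{v_2}$.

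Next I set $W=\{v_1,v_2\}$ as glue vertices. Every $u$ with $\subtree{u}\cap Q_0\neq\emptyset$ then has $W\ss N(u)$ and so is an attachment point. The first main step is to eliminate every cycle of $\CC$ with a core vertex on $Q_0$: if $q\in V(Q_0)$ is a core vertex of $C$, witnessed by $uv\in E(C)$ with $q\in V(\subtree{u}\cap\subtree{v})$, then $u,v\in\bag{q}$ are both attachment points, so $uv$ is an edge of $C$ with distinct attachment endpoints. Taking $\RR$ to be the family of such attachment edges (two-vertex paths disjoint from $W$) and $\HH$ to be the cycles of $\CC$ disjoint from $W$ that contain an attachment edge, \Cref{lem:general-two-way} shows a single longest member $R^*$ of $\RR$ meets every cycle in $\HH$. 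I then set $A=\{v_1,v_2\}\cup V(R^*)$, so $|A|\le 4$ (omitting $V(R^*)$ if $\RR$ is empty). Any cycle disjoint from $A$ avoids $v_1,v_2$, and were it to contain an attachment edge it would lie in $\HH$ and meet $R^*\ss A$, a contradiction; hence every cycle in $\CC'$ has no core vertex on $Q_0$.

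It then remains to show $\CC'=\emptyset$, or that a single component $X'$ of $X-V(Q)$ captures the cores of all cycles in $\CC'$. Each $C\in\CC'$ still has a core vertex in $\descendants{Q_0}{Q}{X}$, and since this vertex is not on $Q_0$, it lies in a component of $X-V(Q)$ hanging off $Q_0$. The key leverage is that the cycles in $\CC'$ are longest cycles, hence pairwise intersect in at least two vertices by \Cref{lem:cycle_intersection}: two cycles whose cores were confined to disjoint components of $X-V(Q)$ would have subtrees in disjoint regions of $T$ and so be vertex-disjoint, which is impossible. I would combine this with \Cref{lem:boundary-fence}, which forces a cycle reaching into a rooted component to own a vertex buried strictly inside it, to single out one component $X'$ met by every cycle of $\CC'$.

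I expect this final localization to be the main obstacle. The difficulty is that a single cycle of $\CC'$ need not confine its core to one component: it can thread across $Q_0$ through an isolated attachment vertex whose two neighbours on the cycle are non-attachment and lie on opposite sides of $Q_0$, so its core can surface in several components without ever producing an attachment edge. Ruling out these ``weaving'' configurations, or more robustly showing that the pairwise-intersection constraint still forces one component used by every cycle of $\CC'$ while keeping $|A|$ at most $4$, is where the real work lies: the naive fix of detouring the arc of a weaving cycle between two attachment vertices through $W$ only yields a transversal as large as that arc, which would blow the budget of $4$.
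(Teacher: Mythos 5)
Your first half --- the minimal subpath $Q_0$ and the spanning pair $W=\{v_1,v_2\}$ obtained from \Cref{lem:spanlongpath} (via \Cref{lem:cycle_intersection}) or from \Cref{lem:bigbags} --- coincides exactly with the paper's proof, and your attachment-edge step is sound as far as it goes: any single attachment edge $R^*$ does meet every $W$-avoiding cycle containing an attachment edge, so cycles disjoint from $A=W\cup V(R^*)$ have no core vertex on $Q_0$. But this is where the genuine gap lies, and you have correctly diagnosed it yourself: after spending all four vertices of the budget you have only pushed the cores off $Q_0$, with no mechanism left to force the surviving cores into a \emph{single} component of $X-V(Q)$. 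The leverage you hope for is also not real: two cycles whose cores lie in distinct components need not be vertex-disjoint, since a shared vertex may have a subtree that spans $Q_0$ and dips into both components --- the core is a property of the \emph{edges} of a cycle, not its vertices --- so \Cref{lem:cycle_intersection} does not by itself rule out cores scattering across components, and your ``weaving'' cycles genuinely exist.

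The paper's resolution is to make the hub path itself do the localizing, by enlarging $\RR$ from attachment edges to paths on at least $3$ vertices whose endpoints have subtrees meeting $Q_0$ but whose \emph{interior} vertices have subtrees contained in a single component of $\descendants{Q_0}{Q}{X}-V(Q_0)$. One first shows that every cycle in $\CC_1$ (avoiding $W$) has a vertex $u$ with $\subtree{u}$ inside some component: a core vertex in $\descendants{Q_0}{Q}{X}$ is witnessed by an edge of the cycle, and if both endpoints of that edge had subtrees meeting $Q_0$, inserting a vertex of $W$ between them would lengthen the cycle. After discarding the easy case in which some cycle has all but at most one vertex with subtree inside one component $Y$ (then $Y$ captures all of $\CC_1$ by pairwise intersection, and $A=W$ suffices), every cycle in $\CC_1$ contains a maximal such subpath, which lies in $\RR$ and has at least $3$ vertices. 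Now take a \emph{longest} $R\in\RR$: by \Cref{lem:general-two-way} it meets every cycle in $\CC_1$, and --- this is the accounting trick you are missing --- only its two \emph{endpoints} $w_3,w_4$ go into $A$. Any cycle avoiding $A=\{v_1,v_2,w_3,w_4\}$ must then meet $R$ in an interior vertex, whose subtree lies in the one component $Y$ housing the interior of $R$; that pins a core vertex of every surviving cycle, weaving or not, inside the single component $X'=Y$, all within the budget of $4$.
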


\begin{proof}
    Note that if $\CC = \emptyset$, then the lemma is trivially satisfied with $A=\emptyset$.  So we assume that $\CC$ contains a cycle, implying $|V(G)| \ge 3$.  It follows from \Cref{lem:bigbags} that $|\bag{y}| \ge 2$ for each $y\in V(T)$.
        
    Let $z \in V(X)$ and let $Q$ be the $xz$-path in $X$.  Note that $V(\descendants{Q}{Q}{X}) = V(X)$, and hence $\descendants{Q}{Q}{X}$ has the core capture property for $\CC$.  Let $Q_0$ be a minimal subpath of $Q$ such that $\descendants{Q_0}{Q}{X}$ also has the core capture property for $\CC$.  We claim that $G$ has a pair of vertices $\{w_1,w_2\}$ with each $\subtree{w_i}$ containing $Q_0$.  If $|V(Q_0)|\ge 2$, then this follows from \Cref{lem:spanlongpath} since by \Cref{lem:cycle_intersection}, we have that $\kappa(C_1\cup C_2)\ge 2$ for all $C_1,C_2\in\CC$.  Otherwise, if $Q_0$ consists of a single vertex $y$, then we choose $\{w_1,w_2\}$ to be a pair of vertices from $\bag{y}$ arbitrarily.  Let $W=\{w_1,w_2\}$.

    Let $\CC_1$ be the set of all cycles $C\in\CC$ such that $C$ is disjoint from $W$. We may suppose $\CC_1\ne\emptyset$, or else the lemma is satisfied with $A=W$.  We claim that each $C\in\CC_1$ contains a vertex whose subtree is contained in a component of $\descendants{Q_0}{Q}{X} - V(Q_0)$.  Since $C$ has a core vertex in $\descendants{Q_0}{Q}{X}$, it follows that $C$ contains adjacent vertices $u_1$ and $u_2$ such that $\subtree{u_1}\cap\subtree{u_2}$ intersects $\descendants{Q_0}{Q}{X}$.  Since $C$ is a longest cycle and $w_1\not\in V(C)$, at least one of $\{\subtree{u_1},\subtree{u_2}\}$ is disjoint from $V(Q_0)$, or else $C$ would extend to a longer cycle by inserting $w_1$ between $u_1$ and $u_2$.  Hence one of $\{\subtree{u_1},\subtree{u_2}\}$ is contained in a component of $\descendants{Q_0}{Q}{X}-V(Q_0)$.  
    
    Suppose that there exists $C\in\CC_1$ and a component $Y$ of $\descendants{Q_0}{Q}{X} - V(Q_0)$ such that all but at most one vertex $v\in V(C)$ satisfies $\subtree{v} \subseteq Y$.  It follows from \Cref{lem:cycle_intersection} that each cycle in $\CC_1$ intersects $C$ in at least one vertex whose subtree is contained in $Y$.  Therefore each cycle in $\CC_1$ has a vertex whose subtree is contained in $Y$ and it follows that $Y$ has the core capture property for $\CC_1$.  Hence the lemma is satisfied with $A=W$ and $X'=Y$.  So we may assume that each $C\in\CC_1$ contains distinct vertices $u,v_1,v_2$ such that $\subtree{u}\subseteq Y$ for some component $Y$ of $\descendants{Q_0}{Q}{X}-V(Q_0)$ and $\subtree{v_i}$ intersects $Q_0$ for $i\in\{1,2\}$.

    Let $\RR$ be the family of paths $R$ in $G-W$ such that $|V(R)| \ge 3$, the endpoints of $R$ have subtrees intersecting $Q_0$, and each interior vertex $v$ of $R$ satisfies $\subtree{v}\subseteq Y$ for some component $Y$ of $\descendants{Q_0}{Q}{X} - V(Q_0)$.  For each $C\in\CC_1$, a subpath of $C$ which is maximal subject to internal vertices having subtrees contained in a component of $\descendants{Q_0}{Q}{X}-V(Q_0)$ is in the family $\RR$.  Since $\CC_1$ is nonempty, so is $\RR$.  Let $R$ be a longest path in $\RR$.  It follows from \Cref{lem:general-two-way} with $W$ playing the role of glue vertices that each $C\in\CC_1$ intersects $R$.  Let $w_3$ and $w_4$ be the endpoints of $R$, and let $\CC_2$ be the set of all $C\in\CC_1$ that are disjoint from $\{w_3,w_4\}$.  Let $Y$ be the component of $\descendants{Q_0}{Q}{X}-V(Q_0)$ such that each internal vertex $v$ of $R$ satisfies $\subtree{v}\subseteq Y$.  Note that $Y$ is also a component of $\descendants{Q}{Q}{X} - V(Q)$. Furthermore, each $C\in\CC_2$ must intersect $R$ in an interior vertex of $R$ and hence each $C\in\CC_2$ has a core vertex in $Y$.  We set $A=\{w_1,w_2,w_3,w_4\}$ and $X'=Y$.  Since each $C\in\CC$ either intersects $A$ or has a core vertex in $Y$, the lemma is satisfied.  
\end{proof}

For a $2$-connected chordal graph $G$ with host tree $T$, we give an upper bound on $\lct(G)$ in terms of the value of a 2-player \emph{Cutter-Chooser game}.  In the game, the play by Cutter represents our freedom in constructing the transversal to divide the host tree $T$ in an advantageous way, and the play by Chooser represents the worst-case scenario for the location of longest cycles not yet covered by our transversal.  The name is borrowed from the classic cake-cutting literature, see for example \cite{BT96}. The game proceeds in rounds, as follows.  

At the start of each round, Cutter is presented with a rooted subtree $X$ of $T$ with root vertex $x$; in the initial round, $X=T$ and $x$ is the root of $T$.  Cutter selects a vertex $z\in V(X)$, and the subtree $X$ is then cut along the $xz$-path $Q$.  The game ends if $X=Q$.  Otherwise, Chooser responds by selecting a component $X'$ of $X-V(Q)$.  Note that $X'$ is a rooted subtree of $X$.  Let $x'$ be the root of $X'$.  The subtree $X'$ with root $x'$ is presented to Cutter at the start of the next round.  Cutter tries to minimize the number of rounds, and Chooser tries to maximize the number of rounds.  Let $\ccg{T,r}$ be the number of rounds in the Cutter-Chooser game on $T$ with root $r$ under optimal play.  For a (non-rooted) tree $T$, we define $\ccg{T} = \min_{r\in V(T)} \ccg{T,r}$.  One may view the parameter $\ccg{T}$ as a slight variant of the Cutter-Chooser game in which Cutter may choose the root vertex $r$ before the initial round.

In a graph $G$, the \emph{subdivision of an edge} $uv$ replaces $uv$ with a path of length $2$ through a new vertex.  We say that $G'$ is a \emph{subdivision} of $G$ if $G'$ can be obtained from $G$ by a sequence of zero or more edge subdivisions.  Note that $\ccg{T} = 1$ if and only if $T$ is path.  A \emph{caterpillar} is a tree $T$ containing a \emph{spine subpath} $Q$ such that each leaf in $T$ has its neighbor on $Q$.  If $T$ is a subdivision of a caterpillar, then Cutter may select the spine path in the first round, forcing Chooser to select some path to start the second round.  Cutter then cuts along this path, ending the game.  Hence, if $T$ is a subdivided caterpillar, then $\ccg{T}\le 2$.  The converse also holds, and so $\ccg{T}\le 2$ if and only if $T$ is a subdivided caterpillar.  We will need a simple monotonicity property for $\ccg{T,r}$.

\begin{prop}\label{prop:ccg-mono}
Let $T$ be a tree with root $r$, and let $Y$ be the subtree of $T$ rooted at $y$.  We have $\ccg{Y,y}\le \ccg{T,r}$.
\end{prop}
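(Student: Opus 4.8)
The plan is to prove the statement by strong induction on $|V(T)|$, since it is already phrased for an arbitrary rooted tree $(T,r)$ and an arbitrary vertex $y$. The engine is the recursive description of the game value. Writing $Q_z$ for the $rz$-path in $T$, optimal play gives
\[
\ccg{T,r}=\min_{z\in V(T)}\Bigl(1+\max_{X'}\ccg{X',x'}\Bigr),
\]
where the inner maximum ranges over the components $X'$ of $T-V(Q_z)$, each rooted at the vertex $x'$ of $X'$ nearest $r$, and is read as $0$ when $T=Q_z$ (the round that ends the game). To bound $\ccg{Y,y}$ it suffices to exhibit a single legal first move for Cutter on $(Y,y)$ that forces termination within $\ccg{T,r}$ rounds against every Chooser response, and this move I will build from Cutter's optimal move on $(T,r)$.

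First I would dispose of $y=r$, where $Y=T$ and equality is trivial, and of the base case $|V(T)|=1$, which is immediate. When $y\ne r$, I fix an optimal first move $z^*$ for Cutter on $(T,r)$ and set $Q^*=Q_{z^*}$; by the displayed recursion every component of $T-V(Q^*)$ has game value at most $\ccg{T,r}-1$. The argument then splits according to whether $y$ lies on $Q^*$. If $y\notin V(Q^*)$, then $y$ lies in some component $X_1$ of $T-V(Q^*)$, rooted at a vertex $x_1$, and one checks that the subtree of $T$ rooted at $y$ coincides, as a rooted tree, with the subtree of $X_1$ rooted at $y$ (the descendants of $y$ all lie below the cut, in $X_1$). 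Since $|V(X_1)|<|V(T)|$, applying the induction hypothesis to $(X_1,x_1)$ gives $\ccg{Y,y}\le\ccg{X_1,x_1}\le\ccg{T,r}-1$, which suffices.

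The remaining case $y\in V(Q^*)$ is where the only real care is needed, and it is handled without the induction hypothesis. Here $z^*$ is a descendant of $y$, so Cutter may legally play $z'=z^*$ on $(Y,y)$; the cut path produced is $Q'=Q^*\cap Y$, the portion of $Q^*$ running from $y$ to $z^*$. I expect the main obstacle to be purely this bookkeeping step: verifying that $Y-V(Q')=Y-V(Q^*)$ and that each component of this graph is literally a component of $T-V(Q^*)$ that happens to lie in $Y$, carrying the \emph{same} nearest-vertex root. Granting this, each component $Y'$ of $Y-V(Q')$ satisfies $\ccg{Y',y'}\le\ccg{T,r}-1$, so $\ccg{Y,y}\le 1+\max_{Y'}\ccg{Y',y'}\le\ccg{T,r}$, completing the induction. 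The delicate point is thus confirming that component roots are preserved when passing from the cut of $T$ to the restricted cut of $Y$, so that the game values transfer verbatim; the monotonicity of the maxima and the degenerate subcase $Y=Q'$ are then routine.
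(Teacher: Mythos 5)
Your proof is correct, but it takes a genuinely different route from the paper's. The paper argues from Chooser's side, with no induction: it gives a two-phase Chooser strategy in the game on $(T,r)$ in which Chooser selects the component containing $Y$ every round until Cutter's cut path finally meets $Y$, and from that point on simulates an optimal Chooser for the game on $(Y,y)$ (the cut path restricted to $Y$ is the $yz$-subpath, a legal Cutter move there), forcing at least $\ccg{Y,y}$ rounds in the second phase and hence $\ccg{T,r}\ge \ccg{Y,y}$. You argue from Cutter's side by strong induction on $|V(T)|$, taking as your engine the backward-induction recursion for the game value: if Cutter's optimal cut path $Q^*$ misses $y$, the induction hypothesis applied to the component of $T-V(Q^*)$ containing $y$ (which does contain all of $Y$, rooted compatibly) gives $\ccg{Y,y}\le \ccg{T,r}-1$; if $Q^*$ contains $y$, then $z^*$ is a descendant of $y$, Cutter on $(Y,y)$ copies the move $z^*$, and the components of $Y-V(Q')$ are literally components of $T-V(Q^*)$ with the same roots, so the recursion closes the case in one step. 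Both arguments rest on the same structural facts about how a root-to-vertex cut path of $T$ restricts to the rooted subtree $Y$; what differs is the mechanism. The paper's simulation never needs to formalize the value recursion $\ccg{T,r}=\min_z\bigl(1+\max_{X'}\ccg{X',x'}\bigr)$ (a standard fact for finite games, but one you do invoke as an unproved ingredient), while your induction avoids any multi-round simulation and concentrates all the work into the one-round bookkeeping you explicitly flag, which is indeed true and routine to verify.
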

\begin{proof}
We show that $\ccg{Y,y} \le \ccg{T,r}$ by giving a strategy for Chooser in the Cutter--Chooser game on $T$ with root $r$.  The strategy has two phases.  In the first phase, the rounds begin with a subtree $X$ of $T$ with root $x$ such that $Y\subseteq X$.  Let $z$ be the vertex selected by Cutter, and let $Q$ be the $xz$-path in $X$.  If $Q$ and $Y$ are disjoint, then Chooser selects the component of $X-V(Q)$ that contains $Y$ and the first phase continues.  Otherwise $Q$ intersects $Y$ and the second phase has begun.  Chooser simulates a Cutter--Chooser game on $Y$ with root $y$, where Cutter initially cuts along the $yz$-subpath of $Q$, after which Chooser uses an optimal strategy on $Y$.  Since Chooser forces at least $\ccg{Y,y}$ rounds in the second phase, the lower bound on $\ccg{T,r}$ follows.
\end{proof}

Our next proposition uses a divide and conquer strategy for Cutter to show that $\ccg{T} = O(\log n)$ when $T$ has $n$ vertices.

\begin{proposition}\label{prop:ccg-bound}
    If $T$ is an $n$-vertex tree with root $r$, then $\ccg{T}\le \ccg{T,r}\le 1+\floor{\lg n}$.
\end{proposition}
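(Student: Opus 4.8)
The plan is to exhibit an explicit divide-and-conquer strategy for Cutter and to bound the number of rounds it guarantees. The first inequality $\ccg{T}\le\ccg{T,r}$ is immediate from the definition $\ccg{T}=\min_{r'\in V(T)}\ccg{T,r'}$, so essentially all of the work lies in the second inequality.

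For the strategy, in each round Cutter is handed a rooted subtree $X$ with root $x$. Cutter applies Jordan's tree separator theorem (\Cref{lem:tree_separator}) to $X$ to obtain a vertex $z\in V(X)$ such that every component of $X-z$ has at most $|V(X)|/2$ vertices, and plays this $z$. Let $Q$ be the resulting $xz$-path. The key observation is that cutting along the whole path $Q$ is at least as destructive as deleting the single vertex $z$: since $z\in V(Q)$, the forest $X-V(Q)$ is a subgraph of $X-z$, so each component $X'$ of $X-V(Q)$ is contained in a component of $X-z$ and therefore satisfies $|V(X')|\le |V(X)|/2$. Consequently, whatever component Chooser selects, the subtree presented to Cutter at the start of the next round has at most half as many vertices as the current one.

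It then remains to run the counting. Writing $n_i$ for the number of vertices in the subtree at the start of round $i$, we have $n_1=n$ and $n_{i+1}\le\floor{n_i/2}$, so $n_i\le\floor{n/2^{i-1}}$. At round $i=1+\floor{\lg n}$ this yields $n_i\le\floor{n/2^{\floor{\lg n}}}=1$, since $2^{\floor{\lg n}}\le n<2^{1+\floor{\lg n}}$ forces $1\le n/2^{\floor{\lg n}}<2$. A subtree on a single vertex $x$ is a path, so Cutter selects $z=x$, forcing $X=Q$ and ending the game in that round. Hence Cutter finishes in at most $1+\floor{\lg n}$ rounds, giving $\ccg{T,r}\le 1+\floor{\lg n}$.

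I do not expect a serious obstacle here; the one point that deserves care is the link between the game mechanic and \Cref{lem:tree_separator}. Cutter cuts along an entire path $Q$ rather than deleting a single vertex, so one must confirm that cutting along $Q$ inherits the halving guarantee attached to $z$ — which it does precisely because $z$ lies on $Q$, as noted above. The only other thing worth verifying is that the floor arithmetic lands exactly at $1+\floor{\lg n}$ rather than one round more, and the strict inequality $n/2^{\floor{\lg n}}<2$ settles this.
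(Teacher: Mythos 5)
Your proof is correct and follows essentially the same route as the paper: Cutter repeatedly plays the vertex given by Jordan's tree separator theorem (\Cref{lem:tree_separator}), the halving guarantee transfers to the game because every component of $X-V(Q)$ lies inside a component of $X-z$, and the floor arithmetic closes the count at $1+\floor{\lg n}$ rounds. The paper leaves the containment $X-V(Q)\subseteq X-z$ implicit, but your explicit justification of it is exactly the step the paper relies on, so there is nothing further to fix.
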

\begin{proof}
    We give a strategy for Cutter that shows that $\ccg{T,r}\le 1+\floor{\lg n}$.  Initially, we set $X_0=T$.  For $i\ge 1$, at the start of the $i$th round, Cutter uses \Cref{lem:tree_separator} to pick a vertex $z\in V(X_{i-1})$ such that each component of $X_{i-1}-z$ has size at most $|V(X_{i-1})|/2$.  Let $Q$ be the $xz$-path in $X_{i-1}$, and let $X_i$ be the component of $X-V(Q)$ which Chooser selects.  The game ends in round $t$, when Chooser has no remaining components to choose. Since $|V(X_i)| \le |V(X_{i-1})|/2$ for $i\ge 1$, we have that $|V(X_i)| \le n/2^{i}$.  It follows that the total number of rounds $t$ is at most $1+\floor{\lg n}$.
\end{proof}

When $T$ is an $n$-vertex balanced binary tree, it is easy to see that $\ccg{T}=\Theta(\log n)$, and so our upper bound on $\ccg{T}$ in \Cref{prop:ccg-bound} is sharp up to multiplicative constants.  By using an optimal strategy for Cutter to iterate \Cref{lem:cycle-trans}, we obtain an upper bound on $\lct(G)$.

\begin{theorem}\label{thm:lct-upper-game}
    If $G$ is a $2$-connected chordal graph with minimal tree representation $T$, then $\lct(G)\le 4(\ccg{T})$.
\end{theorem}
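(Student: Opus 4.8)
The plan is to run the Cutter--Chooser game on $T$ using Cutter's optimal strategy, and to use each round of the game as the engine for one application of \Cref{lem:cycle-trans}, accumulating the transversal vertices produced along the way. Concretely, I would maintain throughout the process a family $\CC$ of longest cycles which are still \emph{uncovered} (disjoint from the transversal built so far) together with a rooted subtree $X$ that has the core capture property for $\CC$. Initially, take $\CC$ to be the set of \emph{all} longest cycles and $X=T$ with its root; that $X=T$ has the core capture property for $\CC$ is immediate, since every subgraph of $G$ has all of its core vertices inside $V(T)$. I would root $T$ at the vertex $r$ achieving $\ccg{T}=\ccg{T,r}$, so that the number of rounds is exactly $\ccg{T}$.

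In each round, I would let Cutter select the vertex $z\in V(X)$ prescribed by the optimal strategy, let $Q$ be the $xz$-path, and apply \Cref{lem:cycle-trans}. This yields a set $A$ with $|A|\le 4$ and, unless the remaining uncovered family $\CC'$ (the cycles in $\CC$ disjoint from $A$) is empty, a component $X'$ of $X-V(Q)$ with the core capture property for $\CC'$. I would add $A$ to the transversal, replace $\CC$ by $\CC'$, and feed $X'$ back to Cutter as the subtree for the next round. This is exactly the move that Chooser makes in the game: Chooser responds to Cutter's cut by selecting the component $X'$. Thus the sequence of subtrees $X_0=T, X_1, X_2,\ldots$ produced by these applications of \Cref{lem:cycle-trans} traces out a legal play of the Cutter--Chooser game, in which Cutter uses the optimal strategy and Chooser's responses are dictated by the components $X'$ that \Cref{lem:cycle-trans} hands back. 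Since Cutter plays optimally, the number of rounds is at most $\ccg{T,r}=\ccg{T}$, and the game terminates either because $\CC'$ becomes empty (all longest cycles are now covered) or because the presented subtree degenerates to the cut path $Q$, at which point \Cref{lem:cycle-trans} must return an empty $\CC'$, since a component $X'$ of $X-V(Q)$ no longer exists. Each round contributes at most $4$ vertices to the transversal, giving $\lct(G)\le 4\,\ccg{T}$.

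The step I expect to require the most care is verifying that the process genuinely halts in at most $\ccg{T}$ rounds and that, upon halting, \emph{every} longest cycle is covered. The first point rests on the fact that \Cref{lem:cycle-trans}'s output $X'$ is always a proper subtree of $X$ (it is a component of $X-V(Q)$, hence omits all of $V(Q)\supseteq\{x\}$), so the subtrees strictly shrink and the correspondence with legal Chooser moves is faithful; the optimality of Cutter then bounds the length by $\ccg{T}$. The second point is the crux: I must argue that when the game ends, the uncovered family is empty. This is exactly guaranteed by the dichotomy in \Cref{lem:cycle-trans} --- in the final round the lemma cannot produce a nonempty $\CC'$ with an accompanying component $X'$, because either $\CC'$ is empty, or the presented subtree equals $Q$ so that $X-V(Q)$ has no component at all; in the latter situation the lemma's conclusion forces $\CC'=\emptyset$. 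Assembling these observations, the union of all the sets $A$ across the rounds is a longest cycle transversal of $G$ of size at most $4\,\ccg{T}$, which is the desired bound.
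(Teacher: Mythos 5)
Your proposal is correct and follows essentially the same argument as the paper: simulate the Cutter--Chooser game with an optimal Cutter strategy, invoke \Cref{lem:cycle-trans} once per round with Chooser's moves dictated by the component $X'$ the lemma returns, and accumulate the at most $4$ vertices per round over at most $\ccg{T}$ rounds. Your explicit observation that a final round with $X=Q$ forces $\CC'=\emptyset$ (since no component of $X-V(Q)$ exists) is exactly the termination argument the paper leaves implicit.
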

\begin{proof}
Let $x$ be the initial root vertex in $V(T)$ chosen by Cutter under optimal play. Let $\CC$ be the family of longest cycles in $G$.  We initially set $(\CC_0,X_0) = (\CC,T)$.  We apply \Cref{lem:cycle-trans} once per round of a simulated Cutter-Chooser game to obtain $(\CC_0,X_0),\ldots,(\CC_t,X_t)$ and $A_1,\ldots,A_t$ such that $\CC = \CC_0 \supseteq \cdots \supseteq \CC_t$, $T=X_0 \supseteq \cdots \supseteq X_t$, the subtree $X_i$ has the core capture property for $\CC_i$ for $i \ge 0$, and each cycle in $\CC_{i-1} \setminus \CC_{i}$ intersects $A_i$ for all $i \ge 1$.  In our simulated game, we use an optimal strategy for Cutter, and we have Chooser play according to the choices made by \Cref{lem:cycle-trans}. 

Let $i\ge 1$.  Suppose that $X_{i-1}$ has the core capture property for $\CC_{i-1}$ and that our simulated game is at the start of stage $i$.  Cutter is presented with the rooted subtree $X_{i-1}$.  Let $x_{i-1}$ be the root of $X_{i-1}$.  Let $z\in V(X_{i-1})$ be the vertex chosen by Cutter in the simulated Cutter-Chooser game, and let $Q$ be the $x_{i-1}z$-path in $X_{i-1}$.  By \Cref{lem:cycle-trans}, we obtain a set of vertices $A_i\subseteq V(G)$ with $|A_i|\le 4$ such that the family $\CC_i$ of cycles in $\CC_{i-1}$ that are disjoint from $A_i$ is either empty, or there exists a subtree $X_i$ of $X_{i-1} - V(Q)$ such that $X_i$ has the core capture property for $\CC_i$.  If $\CC_i = \emptyset$, then the iteration ends with $t=i$ (the choice of $X_t$ is arbitrary).  Otherwise, we update our simulated game to have Chooser pick $X_i$ and we proceed to stage $i+1$ with the pair $(\CC_i, X_i)$.

The iteration ends with a pair $(\CC_t,X_t)$ such that $\CC_t$ is empty.  Let $A=\bigcup_{i=1}^t A_i$, and note that $A$ is a longest cycle transversal for $G$ since each $C\in\CC$ intersects $A_i$, where $i$ is the unique index such that $C\in \CC_{i-1} - \CC_i$.  Since each $A_i$ has size at most $4$ and $t\le\ccg{T}$, the bound follows.
\end{proof}

We are now prepared to prove our main result on longest cycle transversals. 

\begin{theorem}\label{thm:lct_bound}
If $G$ is an $n$-vertex $2$-connected chordal graph, then $\lct(G)\le 4(1+\floor{\lg n})$. 
\end{theorem}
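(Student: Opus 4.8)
The plan is to package together the two main ingredients already in hand: the game-theoretic bound $\lct(G)\le 4\,\ccg{T}$ from \Cref{thm:lct-upper-game}, and the divide-and-conquer estimate $\ccg{T}\le 1+\floor{\lg |V(T)|}$ from \Cref{prop:ccg-bound}. The only real gap to bridge is that these are phrased in terms of the number of host-tree vertices $|V(T)|$, whereas the target is phrased in terms of $n=|V(G)|$. So the heart of the argument is to show $|V(T)|\le n$.

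First I would fix a \emph{minimal} tree representation $T$ of $G$, which exists since $G$ is chordal. Applying \Cref{thm:lct-upper-game} gives $\lct(G)\le 4\,\ccg{T}$, and applying \Cref{prop:ccg-bound} gives $\ccg{T}\le\ccg{T,r}\le 1+\floor{\lg |V(T)|}$ for any root $r$. These two inequalities are essentially black boxes at this point, so no further work is needed on them.

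The remaining step, and the only place calling for a genuine argument, is to bound $|V(T)|$ by $n$. As established in the preliminaries, in a minimal tree representation the bags $\{\bag{x}\st x\in V(T)\}$ are exactly the maximal cliques of $G$, and distinct host-tree vertices give distinct (indeed, pairwise incomparable) bags; hence $|V(T)|$ equals the number of maximal cliques of $G$. I would then invoke the classical fact that a chordal graph on $n$ vertices has at most $n$ maximal cliques — for instance via a perfect elimination ordering, charging each maximal clique to the first of its vertices eliminated — to conclude $|V(T)|\le n$. Chaining the inequalities and using monotonicity of $\floor{\lg\cdot}$ then yields $\lct(G)\le 4\,\ccg{T}\le 4(1+\floor{\lg |V(T)|})\le 4(1+\floor{\lg n})$.

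I do not anticipate any substantive obstacle here: the theorem is a repackaging of \Cref{thm:lct-upper-game} and \Cref{prop:ccg-bound}, with the single nontrivial input being the standard bound on the number of maximal cliques in a chordal graph, which supplies $|V(T)|\le n$.
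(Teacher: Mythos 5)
Your proposal is correct and follows essentially the same route as the paper: fix a minimal tree representation $T$, use the fact that its bags are exactly the maximal cliques of $G$ (so $|V(T)|\le n$ by the classical bound on maximal cliques in chordal graphs, which the paper cites rather than reproves), and then chain \Cref{thm:lct-upper-game} with \Cref{prop:ccg-bound}. No gaps; this matches the paper's proof step for step.
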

\begin{proof}
    Since $G$ is an $n$-vertex chordal graph, $G$ has at most $n$ maximal cliques (see, for example, \cite{G72}).  Let $T$ be a minimal tree representation for $G$.  Since bags in $T$ correspond to maximal cliques in $G$ (see the discussion preceding \Cref{lem:bigbags}), it follows that $|V(T)|\le n$.  By \Cref{prop:ccg-bound} and \Cref{thm:lct-upper-game}, we have $\lct(G)\le 4(\ccg{T}) \le 4(1+\floor{\lg n})$. 
\end{proof}

A more careful analysis shows that $\lct(G)\le 4(\ccg{T}-1)$ when $G$ is a $2$-connected chordal graph with a host tree $T$ satisfying $\ccg{T}>1$.  We sketch the details.  If the iteration lasts at most $\ccg{T}-1$ rounds, then the bound is clear.  Suppose that $t=\ccg{T}$.  At the start of round $t-1$, when Cutter is presented with $(\CC_{t-2}, X_{t-2})$, Cutter selects a path $Q$ in $X_{t-2}$ and obtains a set $A_{t-1}$ of at most $4$ vertices and Chooser selects a component $X_{t-1}$ of $X_{t-2} - V(Q)$ such that $X_{t-1}$ has the core capture property for $\CC_{t-1}$, where $\CC_{t-1}$ is the set of cycles in $\CC_{t-2}$ that are disjoint from $A_{t-1}$. Note that $X_{t-1}$ is necessarily a path, as the game terminates on round $t$ and $t=\ccg{T}$. If some cycle $C\in\CC_{t-1}$ satisfies $\subtree{v}\subseteq X_{t-1}$ for all but at most one vertex $v\in V(C)$, then every longest cycle in $G$ intersects $C$ at least twice and hence $X_{t-1}$ has the core capture property for all of $\CC$, let alone $\CC_{t-2}$.  We may then take $A_{t-1} = \emptyset$ and $\CC_{t-1} = \CC_{t-2}$.  Otherwise, since $X_{t-1}$ is a path, we may choose $w_3$ and $w_4$ in \Cref{lem:cycle-trans} such that $\subtree{w_3}$ and $\subtree{w_4}$ extend maximally into $X_{t-1}$ subject to intersecting $Q$.  In this case, $A_{t-1}$ intersects every cycle in $\CC_{t-2}$ and so $\CC_{t-1} = \emptyset$.  With $\CC_{t-1}$ already empty, it is unnecessary to include $A_t$ in our longest cycle transversal.  In both cases, we save $4$ vertices in our transversal.  It follows that $\lct(G)\le 4$ when $G$ is a $2$-connected chordal graph admitting a subdivided caterpillar host tree.  It would be interesting to obtain better upper bounds on $\lct(G)$ for $2$-connected chordal graphs that admit subdivided caterpillar host trees.

\section{Longest Path Transversals}\label{sec:LPT}

In this section, we prove that each connected chordal graph $G$ with $n$ vertices has a longest path transversal of size $O(\log^2 n)$.  The main ideas are similar to those in \Cref{sec:LCT}, but the asymmetry of the endpoints of paths presents some additional complications.  

Suppose that $G$ is a chordal graph with minimal rooted tree representation $T$, let $X$ be a subtree of $T$ rooted at $x$, and let $\PP$ be a family of longest paths such that $X$ has the core capture property for $\PP$.  A $uv$-path $P\in \PP$ is a \emph{round trip path} if both $\subtree{u}$ and $\subtree{v}$ have a vertex outside $X$.  Although $P$ may have interior vertices with subtrees contained deep inside $X$, we have that $P$ starts and ends with subtrees that are not contained in $X$.  A $uv$-path $P\in \PP$ is a \emph{one way path} if at least one of $\subtree{u}$ and $\subtree{v}$ is contained in $X$.  Let $\PP_1$ and $\PP_2$ respectively be the set of round trip paths and one way paths in $\PP$.  As with longest cycle transversals, we would like to find a small set $A$ of vertices in $G$ such that the family $\PP'$ of paths in $\PP$ that are disjoint from $A$ is either empty or there is a proper subtree $X'$ of $X$ having the core capture property for $\PP'$.  Unfortunately, we may need distinct subtrees $X'_1$ and $X'_2$ such that $X'_{\ell}$ has the core capture property for $\PP' \cap \PP_\ell$ for $\ell\in\{1,2\}$.  Since round trip paths with respect to $X$ are also round trip paths with respect to subtrees of $X$, we find transversals for the round trip paths first.  

\begin{lemma}\label{lem:roundpath}
Let $G$ be a connected chordal graph with minimal rooted tree representation $T$.  Let $X$ be a subtree of $T$ rooted at $x$, let $z\in V(X)$, and let $Q$ be the $xz$-path in $X$.  Let $\PP$ be a family of longest paths in $G$ such that $X$ has the core capture property for $\PP$ and the endpoints of each $P\in\PP$ have subtrees that contain a vertex outside $X$.  There is a set $A\subseteq V(G)$ with $|A| \le 4$ such that the family $\PP'$ of paths in $\PP$ that are disjoint from $A$ is either empty, or there is a component $X'$ of $X-V(Q)$ such that $X'$ has the core capture property for $\PP'$.
\end{lemma}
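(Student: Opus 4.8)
The plan is to mirror the proof of \Cref{lem:cycle-trans}, using the round trip hypothesis as the substitute for the $2$-connectivity of $C_1\cup C_2$ that \Cref{lem:cycle_intersection} supplied in the cycle setting. As there, I may assume $\PP\ne\emptyset$, so that $|\bag{y}|\ge 2$ for all $y$ by \Cref{lem:bigbags}, and (since some endpoint subtree leaves $X$) that $X\ne T$. Since $\descendants{Q}{Q}{X}$ has the same vertex set as $X$, it has the core capture property for $\PP$; let $Q_0=q_a\cdots q_b$, with $q_a$ nearest the root, be a minimal subpath of $Q$ for which $\descendants{Q_0}{Q}{X}$ retains it. The goal is to produce two \emph{distinct} glue vertices $w_1,w_2$ whose subtrees both contain a common subpath of $Q_0$, after which the argument of \Cref{lem:cycle-trans} carries over essentially verbatim.

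The step I expect to be the main obstacle is producing even a \emph{single} spanning vertex. Here \Cref{lem:spanlongpath} only guarantees $\min\{\kappa(P_1\cup P_2)\}$ such vertices, and two longest paths may meet in a single cut vertex, so this minimum can be $1$. The round trip hypothesis rescues the situation for a single well-chosen path. Shrinking $Q_0$ from its deep end, minimality yields $P_2\in\PP$ with a core vertex in $\descendants{q_b}{Q}{X}$ but none in any $\descendants{q_j}{Q}{X}$ with $a\le j\le b-1$; this already places a core vertex of $P_2$ in the lower exterior component of $T-E(Q_0)$. To find one in the upper exterior component I will use that both endpoints of $P_2$ have subtrees leaving $X$, hence reaching strictly above $q_a$. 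Considering the tree edge $e$ immediately above $q_a$ (the parent edge of the root if $q_a$ is the root, which exists as $X\ne T$), the vertices of $P_2$ whose subtree contains $e$ form a clique in $G$ and so at most two consecutive vertices of the path $P_2$; since $P_2$ reaches strictly above $e$ (its escaping endpoints) and strictly below $e$ (its deep core vertex), a short case analysis on the pieces of $P_2$ after deleting such a vertex forces \emph{exactly two} consecutive vertices $p,p'$ with the upper endpoint of $e$ in $\subtree{p}\cap\subtree{p'}$. That intersection lies above $q_a$, supplying the missing core vertex. With core vertices in both exterior components of $T-E(Q_0)$ and none in an interior component, \Cref{lem:spanvertex} produces a vertex $w_1$ with $\subtree{w_1}\supseteq Q_0$.

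To obtain a second glue vertex distinct from $w_1$, I restrict to $\PP'=\{P\in\PP\st w_1\notin V(P)\}$, since the paths meeting $w_1$ are already captured. If $\PP'=\emptyset$ the lemma holds with $A=\{w_1\}$. Otherwise $\descendants{Q_0}{Q}{X}$ still has the core capture property for $\PP'$, so I re-minimize to a subpath $Q_0'\ss Q_0$ capturing $\PP'$. Applying the argument of the previous paragraph to $\PP'$ and $Q_0'$ (or, when $|V(Q_0')|=1$, simply taking a second vertex of the corresponding bag, which has size at least $2$) yields a spanning vertex $w_2$ lying on a path of $\PP'$, whence $w_2\ne w_1$. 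As $w_1$ spans $Q_0\supseteq Q_0'$, both subtrees contain $Q_0'$; set $W=\{w_1,w_2\}$.

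Finally I run the remainder of \Cref{lem:cycle-trans} with $Q_0'$ in place of $Q_0$. Let $\PP_1$ be the paths of $\PP'$ disjoint from $W$. For each $P\in\PP_1$ the insertion argument (inserting $w_1$ between an adjacent pair whose subtrees meet $Q_0'$ would lengthen $P$) shows $P$ has a vertex whose subtree lies in a component $Y$ of $\descendants{Q_0'}{Q}{X}-V(Q_0')$; here the round trip hypothesis makes the troublesome ``almost all of $P$ inside $Y$'' case vacuous, since the two escaping endpoints of $P$ already lie outside $Y$. Letting $R$ be a longest member of the family $\RR$ of excursions into such components (endpoints meeting $Q_0'$, interior in a single component), \Cref{lem:general-two-way} with glue vertices $W$ forces every $P\in\PP_1$ to meet $R$; those avoiding the endpoints $w_3,w_4$ of $R$ meet it in its interior and hence have a core vertex in the component $Y$ containing $R$'s interior, which is a component of $X-V(Q)$. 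Setting $A=\{w_1,w_2,w_3,w_4\}$ and $X'=Y$ completes the proof, with $|A|\le 4$.
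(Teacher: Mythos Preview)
Your argument is correct and matches the paper's overall structure: secure glue vertices $w_1,w_2$ whose subtrees span a minimal capturing subpath of $Q$, then apply \Cref{lem:general-two-way} to the family $\RR$ of excursions to obtain $w_3,w_4$. The one place you diverge is in producing $w_1$, and there you work harder than the paper does. You observe that \Cref{lem:spanlongpath} yields $k=\min_{P_1,P_2\in\PP}\kappa(P_1\cup P_2)$ spanning vertices and that this minimum may be only $1$---but $k=1$ is exactly what is needed for a \emph{single} $w_1$, since longest paths in a connected graph pairwise intersect and hence their union is connected with $\kappa\ge 1$ (once $|V(G)|\ge 2$). The paper simply invokes \Cref{lem:spanlongpath} with $k\ge 1$ to get $w_1$, restricts to $\PP_1=\{P\in\PP:w_1\notin V(P)\}$, re-minimizes to $Q_2\subseteq Q_1$, and invokes the lemma again; the resulting $w_2$ lies in the union of two paths from $\PP_1$ and so is automatically distinct from $w_1$. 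Your detour through the edge $e$ above $q_a$ and the case analysis forcing two consecutive vertices of $P_2$ to span $e$ is valid but unnecessary: the round-trip hypothesis is not needed to build the glue vertices, only later (as in your last paragraph and in the paper) to guarantee that the deep vertex of each $P$ avoiding $W$ is an \emph{interior} vertex of $P$, so that $P$ contributes a genuine subpath to $\RR$.
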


\begin{proof}
If $|V(G)| = 1$, then we may take $A=V(G)$ with $\PP'=\emptyset$.  Hence by \Cref{lem:bigbags}, we may assume that each bag in $T$ has size at least $2$. 

Let $z$ be a vertex in $X$ and let $Q$ be the $xz$-path in $X$.  Recall that $V(\descendants{Q}{Q}{X} ) = V(X)$, and so $\descendants{Q}{Q}{X}$ has the core capture property for $\PP$.  Let $Q_1$ be a minimal subpath of $Q$ such that $\descendants{Q_1}{Q}{X}$ has the core capture property for $\PP$.  We claim that $G$ has a vertex $w_1$ such that $S(w_1)$ spans $Q_1$.  If $|V(Q_1)|=1$ and $Q_1 = y$, then we may take $w_1$ to be any vertex in $B(y)$.  Otherwise, if $|V(Q_1)|\ge 2$, then we apply \Cref{lem:spanlongpath} to obtain $w_1$.

Let $\PP_1$ be the set of all paths $P\in \PP$ that do not contain $w_1$; note that if $\PP_1$ is empty, then the lemma is satisfied with $A=\{w_1\}$ and $\PP'$ empty.  So assume that $\PP_1$ is nonempty and let $Q_2$ be a minimal subpath of $Q_1$ such that $\descendants{Q_2}{Q}{X}$ has the core capture property with respect to $\PP_1$.  Our next aim is to obtain $w_2\in V(G)$ such that $w_2\ne w_1$ and $S(w_2)$ spans $Q_2$.  Indeed, if $|V(Q_2)| = 1$ with $Q_2 = y$, then we may use $|B(y)|\ge 2$ to choose $w_2\in B(y)$ distinct from $w_1$.  Otherwise, if $|V(Q_2)| \ge 2$, then we obtain $w_2$ by applying \Cref{lem:spanlongpath} with $\HH=\PP_1$, and observing that $w_2$ is chosen from the union of two paths in $\PP_1$, neither of which contains $w_1$.

Let $\PP_2$ be the set of all paths in $\PP_1$ which do not contain $w_2$.  Again, we may assume $\PP_2$ is nonempty, or else the lemma is satisfied with $A=\{w_1,w_2\}$ and $\PP'$ empty.  Let $\RR$ be the family of paths in $G - \{w_1,w_2\}$ of size at least $3$ whose endpoints have subtrees intersecting $Q_2$ and whose interior vertices $u$ satisfy $S(u)\subseteq \descendants{Q_2}{Q}{X}- V(Q_2)$.  We claim that each $P\in\PP_2$ has a subpath in $\RR$.  Indeed, since $\descendants{Q_2}{Q}{X}$ has the core capture property for $\PP_1$ and $P\in \PP_2\subseteq \PP_1$, it follows that $P$ has a core vertex in $\descendants{Q_2}{Q}{X}$.  Since $P$ is a longest path, $V(Q_2)\subseteq S(w_2)$, and $w_2\not\in V(P)$, it follows that $P$ has no core vertex in $V(Q_2)$, as if $y\in V(Q_2)$ and $y\in S(u)\cap S(v)$ for some $uv\in E(P)$, then we obtain a longer path by inserting $w_2$ between $u$ and $v$.  Since $P$ has a core vertex in $\descendants{Q_2}{Q}{X}$ but no core vertex in $V(Q_2)$, it follows that there exists $u\in V(P)$ with $S(u)\subseteq \descendants{Q_2}{Q}{X}-V(Q_2)$.  Since the endpoints of $P$ have subtrees intersecting $T-V(X)$, it follows that $u$ is an interior vertex in a subpath of $P$ in $\RR$.  

Let $W=\{w_1,w_2\}$, and let $R$ be a path in $\RR$ of maximum length.  Applying \Cref{lem:general-two-way} with $\HH=\PP_2$ to $W$ and $\RR$, it follows that $V(R)$ intersects each path in $\PP_2$.  Let $w_3$ and $w_4$ be the endpoints of $R$.  Note that $R-\{w_3,w_4\}$ is a nonempty connected subgraph of $G$, all of whose subtrees are contained in a single component of $\descendants{Q_2}{Q}{X} - V(Q_2)$, which happens to be a rooted subtree $X'$ of $X$.

Let $A=\{w_1,w_2,w_3,w_4\}$.  Let $P$ be a path in $\PP$ disjoint from $A$, and note that $P\in \PP_2$.  Recall that $P$ and $R$ intersect, and since the endpoints of $R$ are contained in $A$, it follows that $P$ contains a vertex $u$ in the interior of $R$.  Since $S(u)\subseteq X'$ and since $|V(P)| > 1$, it follows that $P$ has a core vertex in $X'$.  
\end{proof}

We now prove a corollary analogous to Theorem \ref{thm:lct-upper-game} but for Lemma \ref{lem:roundpath}.

\begin{cor}\label{cor:roundpath}
Let $G$ be a connected chordal graph with minimal rooted tree representation $T$, and let $X$ be a rooted subtree of $T$ with root $x$. Let $\PP$ be a family of longest paths in $G$ such that $X$ has the core capture property for $\PP$ and the endpoints of each $P\in\PP$ have subtrees that contain a vertex outside $X$.  There is a set $A\subseteq V(G)$ that intersects each path in $\PP$ with $|A| \le 4(\ccg{X,x})$.
\end{cor}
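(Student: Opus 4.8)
The plan is to iterate \Cref{lem:roundpath} once per round of a simulated Cutter--Chooser game played on $X$ with root $x$, mirroring the proof of \Cref{thm:lct-upper-game}. I would initialize $(\PP_0,X_0)=(\PP,X)$, have Cutter play an optimal strategy, and have Chooser respond with whatever subtree \Cref{lem:roundpath} produces. At the start of round $i$, Cutter is presented with the rooted subtree $X_{i-1}$ with root $x_{i-1}$, selects a vertex $z\in V(X_{i-1})$, and we let $Q$ be the $x_{i-1}z$-path in $X_{i-1}$. Applying \Cref{lem:roundpath} yields a set $A_i\subseteq V(G)$ with $|A_i|\le 4$ such that the family $\PP_i$ of paths in $\PP_{i-1}$ disjoint from $A_i$ is either empty or admits a component $X_i$ of $X_{i-1}-V(Q)$ with the core capture property for $\PP_i$. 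If $\PP_i=\emptyset$ the iteration terminates; otherwise Chooser selects $X_i$ and we proceed to round $i+1$ with the pair $(\PP_i,X_i)$.

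Before invoking \Cref{lem:roundpath} in round $i$, I would check that all of its hypotheses hold for $(\PP_{i-1},X_{i-1})$. By construction $X_{i-1}$ has the core capture property for $\PP_{i-1}$, and $\PP_{i-1}\subseteq\PP$ consists of longest paths. The remaining hypothesis is that both endpoints of each $P\in\PP_{i-1}$ have subtrees meeting $V(T)-V(X_{i-1})$, and this is precisely where the round-trip structure is needed. Each $X_i$ is a subtree of $X$, so $X_{i-1}\subseteq X$, and any vertex of $T$ outside $X$ is also outside $X_{i-1}$. Since every $P\in\PP$ has endpoints whose subtrees contain a vertex outside $X$, those subtrees also contain a vertex outside $X_{i-1}$, so the round-trip hypothesis is automatically preserved down the nested chain $X=X_0\supseteq X_1\supseteq\cdots$. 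Verifying this invariant is the only real subtlety; everything else is bookkeeping.

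Finally, the iteration ends at some round $t$ with $\PP_t=\emptyset$. Setting $A=\bigcup_{i=1}^t A_i$, each $P\in\PP$ lies in $\PP_{i-1}-\PP_i$ for a unique index $i$ and hence intersects $A_i\subseteq A$, so $A$ meets every path in $\PP$. Since each $|A_i|\le 4$, it remains only to bound $t$: because Cutter plays optimally and Chooser's moves in the simulated game are exactly the components $X_i$ returned by \Cref{lem:roundpath}, the number of rounds is at most the game value, so $t\le\ccg{X,x}$ and therefore $|A|\le 4\,\ccg{X,x}$.
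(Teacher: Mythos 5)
Your proposal is correct and follows essentially the same argument as the paper: simulate a Cutter--Chooser game on $X$ with root $x$, use an optimal Cutter strategy, let \Cref{lem:roundpath} dictate Chooser's choices, and take $A$ to be the union of the at most $\ccg{X,x}$ sets of size at most $4$. The invariant you single out---that the round-trip hypothesis persists because $X_{i-1}\subseteq X$, so endpoints with subtrees leaving $X$ also leave $X_{i-1}$---is exactly the observation the paper makes in the discussion preceding \Cref{lem:roundpath} (``round trip paths with respect to $X$ are also round trip paths with respect to subtrees of $X$''), and making it explicit inside the proof is a sound choice.
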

\begin{proof}
We simulate a Cutter-Chooser game on $X$ with root $x$.  In our simulation, we use an optimal strategy for Cutter and we have \Cref{lem:roundpath} make decisions for Chooser.  

Let $(\PP_0,X_0) = (\PP,X)$.  We iteratively apply \Cref{lem:roundpath} to obtain a sequence $(\PP_0, X_0), \ldots, (\PP_t,X_t)$ with $\PP = \PP_0 \supseteq \ldots \supseteq \PP_t$ and $X=X_0 \supseteq \ldots \supseteq X_t$ and sets $A_1,\ldots,A_t$ such that $X_i$ has the core capture property for $\PP_i$ for $i \ge 0$, each path $P\in \PP_{i-1} \setminus \PP_i$ intersects $A_i$, and $t\le \ccg{X,x}$. 

Let $i\ge 1$.  At the start of stage $i$ in our simulated game, Cutter is presented with the subtree $X_{i-1}$.  Let $x_{i-1}$ be the root of $X_{i-1}$, let $z\in V(X_{i-1})$ be the vertex selected by the optimal strategy for Cutter, and let $Q$ be the $xz$-path in $X_{i-1}$.  By \Cref{lem:roundpath}, there exists a set $A_i\subseteq V(G)$ with $|A_i| \le 4$ such that the set of paths $\PP_i$ which are disjoint from $A_i$ is either empty or some component $X_i$ of $X_{i-1} - V(Q)$ has the core capture property for $\PP_i$.  If $\PP_i$ is empty, then we set $t=i$ and the iteration ends (and the choice of $X_t$ is arbitrary). Otherwise, Chooser selects the component $X_i$ and the game proceeds to the next round. Since Cutter plays optimally and the simulated game ends after $t$ rounds, we have that $t\le \ccg{X,x}$ and $\PP_t=\emptyset$.  Therefore $\bigcup_{i=1}^t A_i$ is a transversal for $\PP$ of size at most $4t$.
\end{proof}

\Cref{lem:general-two-way} considers a set of paths $\RR$ such that each $R\in\RR$ has endpoints that are adjacent to the glue vertices; these paths start in and return to the neighborhood of the glue vertices.  Our next lemma is an analogue for ``one way'' paths that start in the neighborhood of a glue vertex and need not return.  A \emph{suffix} of a path $P$ is a subpath of $P$ containing an endpoint of $P$.

\begin{lemma}\label{lem:one-way}
    Let $G$ be a connected graph, let $w$ be a glue vertex, and let $\PP$ be a family of longest paths in $G$ with each $P\in\PP$ avoiding $w$.  An attachment point is a vertex $u\in N(w)$.  Let $\RR$ be a nonempty family of paths in $G$ such that each $R\in\RR$ avoids $w$ and has at least one attachment endpoint.  If every path $P\in\PP$ contains a suffix in $\RR$, then a longest path in $\RR$ intersects every path in $\PP$.
\end{lemma}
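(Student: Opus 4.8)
The plan is to mimic the detour argument used in \Cref{lem:general-two-way}, adapted to a single glue vertex and to the asymmetry of a path's two endpoints. The guiding intuition is that whereas the two-way lemma splices $R$ into the \emph{middle} of a path by gluing both endpoints of $R$ to the pair $W$, here we instead \emph{append} $R$ to a suitable truncation of $P$ through the single glue vertex $w$. Because we are extending a path at a free end rather than rerouting through its interior, one attachment point suffices.

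First I would fix a longest path $R$ in $\RR$ together with an attachment endpoint $a$ of $R$, so $a\in N(w)$. Suppose for contradiction that $R$ is disjoint from some $P\in\PP$. By hypothesis $P$ has a suffix $R_0\in\RR$; write the endpoints of $R_0$ as $p$ and $q$, where $p$ is the endpoint of $P$ contained in the suffix, and let $p'$ denote the other endpoint of $P$. Since $R_0\in\RR$, it has an attachment endpoint $b\in\{p,q\}$, so $b\in N(w)$, and since $R_0$ is a subpath of $P$ we have $b\in V(P)$.

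The key step is the rerouting. Let $P_b$ be the subpath of $P$ from $p'$ to $b$, and form the walk that follows $P_b$ to $b$, then the edge $bw$, then the edge $wa$, and finally all of $R$. Since every member of $\PP$ and of $\RR$ avoids $w$ and since $R$ is disjoint from $P$, the three pieces $V(P_b)$, $\{w\}$, and $V(R)$ are pairwise disjoint, so this walk is a genuine (simple) path. I would then compare lengths. Because $b$ is an endpoint of $R_0$, we have $|V(P_b)|\ge |V(P)|-|V(R_0)|+1$, and because $R$ is a longest path in $\RR$ while $R_0\in\RR$, we have $|V(R_0)|\le|V(R)|$. The new path therefore has $|V(P_b)|+1+|V(R)|\ge |V(P)|+2+\bigl(|V(R)|-|V(R_0)|\bigr)\ge |V(P)|+2$ vertices, strictly more than $P$, contradicting that $P$ is a longest path in $G$.

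I expect the one genuine subtlety to be that the hypothesis only guarantees \emph{some} attachment endpoint of $R_0$, and we do not get to choose whether it is the $P$-endpoint $p$ or the interior cut vertex $q$. The clean remedy is to phrase the construction uniformly in terms of $b$: when $b=q$ the truncation $P_b$ keeps $P$ up to $q$ and discards the rest of the suffix $R_0$, while when $b=p$ the truncation keeps all of $P$ and we simply append $w$ and $R$ at $p$; the degenerate case $R_0=P$ (so $q=p'$) is also covered. In every case the displayed bound $|V(P_b)|\ge|V(P)|-|V(R_0)|+1$ holds, so no real case analysis is needed once the rerouting is set up carefully. Confirming that $P_b$ is a well-defined subpath and that this length inequality is uniform across the possibilities for $b$ is the only place where care is required.
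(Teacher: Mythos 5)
Your proposal is correct and follows essentially the same argument as the paper: truncate $P$ at the attachment endpoint of its suffix $R_0$, route through $w$, append the longest path $R\in\RR$, and use $|V(R)|\ge|V(R_0)|$ to contradict the maximality of $P$. The only cosmetic difference is that the paper rules out the case where the attachment endpoint of $R_0$ is the endpoint of $P$ itself (since appending $w$ there already yields a longer path), whereas you absorb that case uniformly into the bound $|V(P_b)|\ge|V(P)|-|V(R_0)|+1$; both resolutions are equally valid.
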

\begin{proof}
    Let $R$ be a longest path in $\RR$ and suppose that some $P\in\PP$ is disjoint from $R$.  Let $R_0$ be a suffix of $P$ with $R_0\in\RR$.  Let $x$ and $y$ be the endpoints of $R_0$, with $y$ also serving as an endpoint of $P$.  Note that $y$ is not an attachment point, or else we may extend $P$ by appending $w$ at $y$.  Therefore $x$ is an attachment point.  Since $|V(R)| \ge |V(R_0)|$, we obtain a longer path by replacing $R_0$ with the path $xwR$ (with $R$ oriented appropriately).
\end{proof}

Our next lemma applies \Cref{cor:roundpath} to find a transversal for round trip paths; the one way paths are treated by adding a small set of vertices to the transversal and obtaining a subtree $X'$ with the core capture property for the remaining one way paths.   

\begin{lemma}\label{lem:allpaths}
Let $G$ be a connected chordal graph with minimal rooted tree representation $T$. Let $X$ be a subtree of $T$ rooted at $x$, let $z\in V(X)$, and let $Q$ be the $xz$-path in $X$. Let $\PP$ be a family of longest paths in $G$ such that $X$ has the core capture property for $\PP$. There is a set $A\subseteq V(G)$ with $|A| \le 4(\ccg{X,x}) + 5$ such that the family $\PP'$ of paths in $\PP$ that are disjoint from $A$ is either empty, or there is a component $X'$ of $X-V(Q)$ such that $X'$ has the core capture property for $\PP'$. 
\end{lemma}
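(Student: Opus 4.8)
The plan is to split $\PP$ into the round trip paths $\PP_1$ and the one way paths $\PP_2$ with respect to $X$, and to dispose of $\PP_1$ in a single application of \Cref{cor:roundpath}. The reason this split is useful is that ``round trip with respect to $X$'' is inherited by every subtree of $X$, so the hypotheses of \Cref{cor:roundpath} hold for $\PP_1$ and $X$; applying it produces a set $A_1\subseteq V(G)$ with $|A_1|\le 4\ccg{X,x}$ meeting every round trip path. Once $A_1$ is deleted, every surviving path of $\PP$ is a one way path, and it remains to adjoin at most five further vertices and exhibit a single component $X'$ of $X-V(Q)$ with the core capture property for the surviving one way paths.

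For the one way phase I would reuse the glue-vertex construction from the proof of \Cref{lem:roundpath}. Using the core capture property of $X$ for $\PP_2$, I would choose a minimal subpath $Q_1$ of $Q$ for which $\descendants{Q_1}{Q}{X}$ still captures $\PP_2$ and extract, via \Cref{lem:spanlongpath} (or \Cref{lem:bigbags} when $Q_1$ is a single vertex), a glue vertex $w_1$ whose subtree spans $Q_1$; restricting to the paths that avoid $w_1$ and repeating yields a second glue vertex $w_2\ne w_1$ spanning a minimal subpath $Q_2\subseteq Q_1$. The purpose of $w_1$ and $w_2$ is that, for a twice-surviving one way path $P$, no core vertex of $P$ and (appending instead of inserting the glue vertex) no endpoint subtree of $P$ may meet $V(Q_2)$, lest $P$ extend and contradict maximality; consequently the core of $P$ falls into a hanging component of $\descendants{Q_2}{Q}{X}-V(Q_2)$, which is a genuine component of $X-V(Q)$.

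The new ingredient, absent in the round trip argument, is that the suffix of $P$ ending at its free inside endpoint is always available for rerouting, so I would drive the one way phase with suffix detours rather than interior-segment detours. Concretely, I would take $\RR$ to be the family of suffixes of surviving one way paths whose non-attachment vertices lie in a single component of $X-V(Q)$ and whose other endpoint is an attachment point, that is, a vertex whose subtree meets $Q_2$ and so is adjacent to a glue vertex. Provided every surviving one way path contributes such a suffix, \Cref{lem:one-way} (with glue vertex in $\{w_1,w_2\}$) shows that a longest $R\in\RR$ meets every surviving one way path. Letting $X'$ be the component of $X-V(Q)$ carrying the non-attachment vertices of $R$ and marking the attachment endpoint of $R$, any surviving path disjoint from the resulting $A$ must meet $R$ at a non-attachment vertex whose subtree lies in $X'$, and hence has a core vertex in $X'$. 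Counting $A_1$, the two glue vertices, and the (at most three) endpoints coming from the detours then yields $|A|\le 4\ccg{X,x}+5$.

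The hard part will be the proviso in the previous paragraph: guaranteeing that every surviving one way path really does contribute a suffix lying in a single component of $X-V(Q)$, and not merely of $X-V(Q_2)$. The difficulty is that an inside endpoint whose subtree meets the part of $Q$ outside $Q_2$ escapes the control of the glue vertices, so its terminal suffix can run along $Q$ instead of descending into one hanging component. I would handle these recalcitrant paths with an interior-segment detour via \Cref{lem:general-two-way} (their cores still lie in hanging components, reached through interior segments), reconciling the two detours into a common component by taking the longest detour across both families; it is precisely this reconciliation, together with the endpoints of the auxiliary interior-segment detour, that accounts for the additive $5$ rather than $4$. Making this confinement and reconciliation rigorous is the main obstacle I anticipate.
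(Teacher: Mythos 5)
Your first phase is sound---\Cref{cor:roundpath} does apply to $X$ and the round trip paths with respect to $X$---but the one way phase breaks down exactly at the reconciliation you flag, and the failure is structural rather than technical. Taking the longest detour across both families only works in one direction. If the longest detour $R$ is an interior-segment detour (both endpoints attachment points), then a confined path carrying a suffix detour $R_0$ can indeed be rerouted: delete $R_0$ minus its attachment endpoint and append $w_2R$, a net gain of at least two vertices. But if the longest detour $R$ is a suffix detour, the exchange for a recalcitrant path carrying an interior-segment detour $R_0$ fails: to replace the interior of $R_0$ you must splice in a segment both of whose ends are adjacent to glue vertices, and $R$ has only one attachment end; its other end is a non-attachment vertex buried inside a hanging component, so the rerouted walk cannot be closed back into a path. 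In that case the recalcitrant paths need not meet $R$ at all, and after deleting your five vertices their surviving cores may lie in a component of $X-V(Q)$ different from the component serving the confined paths, so no single $X'$ works.

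The deeper obstruction is budgetary, and it forces the paper's different decomposition. A recalcitrant one way path has its core in a hanging component $Y'$ of $\descendants{Q_2}{Q}{X}-V(Q_2)$ while both of its endpoint subtrees leave $Y'$: the outside endpoint's subtree leaves $X$, and the inside endpoint's subtree is by definition not confined and cannot meet $Q_2$. Relative to $Y'$ these paths are therefore round trip paths, and disposing of them seems to genuinely require an application of \Cref{cor:roundpath} to $(Y',y')$, i.e., a second set of size up to $4\,\ccg{Y',y'}$. Since you already spent $4\,\ccg{X,x}$ on the round trip paths with respect to $X$, the honest patch of your argument proves only $|A|\le 8\,\ccg{X,x}+5$, not the claimed bound. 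The paper avoids this double payment by never splitting at the level of $X$: it finds the glue vertices $w_1,w_2$ for all of $\PP$ first, and only then splits the survivors according to whether some endpoint subtree is confined in $\descendants{Q_2}{Q}{X}-V(Q_2)$. The unconfined family $\PP_4$---which contains both your round trip paths and your recalcitrant paths---is handled as one unit: a single interior-segment detour via \Cref{lem:general-two-way} supplies $w_3,w_4$ and places the cores of all its survivors in one component $Y'$ out of which all of their endpoint subtrees stick, and then one application of \Cref{cor:roundpath} to $(Y',y')$, bounded by $4\,\ccg{X,x}$ via \Cref{prop:ccg-mono}, eliminates $\PP_4$ entirely. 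Only the confined family is left to the suffix-detour argument of \Cref{lem:one-way}, and it alone determines the surviving component $X'$. Bundling the two problematic families so that a single $4\,\ccg{X,x}$ term serves both is the idea missing from your proposal.
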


\begin{proof}
If $|V(G)|=1$, then we may take $A=V(G)$ with $\PP'$ empty.  Hence, by \Cref{lem:bigbags}, we may assume each bag in $T$ has size at least $2$.  If $\PP$ is empty, then we may take $A=\emptyset$ with $\PP'$ also empty.

Let $Q_1$ be a minimal subpath of $Q$ such that $\descendants{Q_1}{Q}{X}$ has the core capture property for $\PP$.  We claim some vertex $w_1\in V(G)$ has a subtree $S(w_1)$ that contains $Q_1$.  If $Q_1$ is a single vertex $y$, then we may take $w_1$ to be any vertex in $B(y)$.  Otherwise, we apply \Cref{lem:spanlongpath} with $\HH=\PP$ to obtain $w_1$.

Let $\PP_1$ be the set of all $P\in\PP$ such that $w_1\not\in V(P)$.  We may assume that $\PP_1$ is nonempty, or else the lemma is satisfied with $A=\{w_1\}$ and $\PP'$ empty.  Let $Q_2$ be a minimal subpath of $Q_1$ such that $\descendants{Q_2}{Q}{X}$ has the core capture property for $\PP_1$.  We claim there is a vertex $w_2\in V(G)$ such that $w_2\ne w_1$ and $Q_2\subseteq S(w_2)$.  Indeed, if $Q_2$ is a single vertex $y$, then since $|B(y)| \ge 2$ and we may choose $w_2\in B(y)$ distinct from $w_1$.  Otherwise, we apply \Cref{lem:spanlongpath} with $\HH=\PP_1$ to obtain $w_2$.  Since $w_2$ is chosen from the union of two paths in $\PP_1$, neither of which contains $w_1$, we have $w_2\ne w_1$.  Let $\PP_2$ be the set of paths in $\PP_1$ that do not contain $w_2$.  Since $S(w_2)$ contains $Q_2$ and all paths in $\PP_2$ avoid $w_2$, it follows that the endpoints of each path in $\PP_2$ have subtrees that are disjoint from $V(Q_2)$ (or else appending $w_2$ would extend the path).

Let $\PP_3$ be the set of paths $P\in \PP_2$ that have an endpoint $v$ such that $S(v)\subseteq \descendants{Q_2}{Q}{X} - V(Q_2)$ and let $\PP_4 = \PP_2 - \PP_3$.  Our goal is to apply \Cref{cor:roundpath} to obtain a small set of vertices $B$ such that every path in $\PP_4$ intersects $B$.  If $\PP_4=\emptyset$, then we may simply take $B=\emptyset$.  Suppose $\PP_4$ is nonempty.  Let $\RR$ be the family of paths $R$ in $G - \{w_1,w_2\}$ of size at least $3$ such that each endpoint of $R$ has a subtree intersecting $Q_2$ and each interior vertex $u$ of $R$ satisfies $S(u)\subseteq \descendants{Q_2}{Q}{X} - V(Q_2)$.  We claim that if $P\in\PP_4$, then $P$ has a subpath in $\RR$.  Since $P$ has a core vertex in $V(\descendants{Q_2}{Q}{X})$ but no core vertex in $V(Q_2)$, it follows from \Cref{lem:boundary-fence} that $P$ contains a vertex $u$ such that $S(u)\subseteq \descendants{Q_2}{Q}{X} - V(Q_2)$.  Since $P\not\in \PP_3$, it follows that $u$ is an interior vertex in a subpath of $P$ contained in $\RR$.  Let $R$ be a longest path in $\RR$.  Applying \Cref{lem:general-two-way} to $\PP_4$ with glue vertices $W=\{w_1,w_2\}$, it follows that each path in $\PP_4$ intersects $R$.  Let $Y'$ be the component of $\descendants{Q_2}{Q}{X} - V(Q_2)$ such that each interior vertex $u$ of $R$ has a subtree $S(u)$ contained in $Y'$. Let $w_3$ and $w_4$ be the endpoints of $R$, and observe that each $P\in \PP_4$ that avoids $w_3$ and $w_4$ must contain an interior vertex in $R$ and hence have a core vertex in $Y'$.  Let $y'$ be the root of $Y'$.  Note that if $u$ is an endpoint of a path $P\in \PP_4$, then $S(u)$ contains a vertex outside $\descendants{Q_2}{Q}{X} - V(Q_2)$.  If $S(u)$ intersects $Q_2$, then we may append $w_2$ to $u$ to obtain a longer path.  Hence $S(u)$ contains a vertex outside $Y'$.  By \Cref{cor:roundpath} and \Cref{prop:ccg-mono}, there exists $B\subseteq V(G)$ such that $|B|\le 4(\ccg{Y',y'}) \le 4(\ccg{X,x})$ and each path in $\PP_4$ contains a vertex in $B\cup \{w_3,w_4\}$.

Our next goal is to process $\PP_3$.  If $\PP_3 = \emptyset$, then the lemma is satisfied with $A=\{w_1,w_2,w_3,w_4\}\cup B$ and $\PP'$ empty.  So we may assume that $\PP_3$ is nonempty.  If $\PP_3$ contains a path $P$ such that $S(u)\subseteq \descendants{Q_2}{Q}{X}-V(Q_2)$ for each vertex $u$ in $P$, then let $X'$ be the component $\descendants{Q_2}{Q}{X}-V(Q_2)$ containing the subtree of each vertex in $P$.  In this case, the lemma is satisfied with $A=\{w_1,w_2,w_3,w_4\}\cup B$ and $\PP' = \PP_3$.  Otherwise each path $P\in\PP_3$ has an endpoint $u$ with $S(u) \subseteq \descendants{Q_2}{Q}{X}-V(Q_2)$ and some other vertex $v$ with $S(v)$ intersecting $V(Q_2)$.

Let $\RR'$ be the set of paths $R$ in $G - w_2$ of size at least $2$ such that some endpoint $u$ has a subtree $S(u)$ that intersects $V(Q_2)$ but all other vertices $v$ in $R$ satisfy $S(v)\subseteq \descendants{Q_2}{Q}{X}-V(Q_2)$.  Note that each path in $\PP_3$ has a subpath in $\RR'$, and it follows from \Cref{lem:one-way} with $w=w_2$ that a longest path $R\in\RR'$ intersects each path in $\PP_3$.  Let $X'$ be the component of $\descendants{x}{Q_2}{X} - V(Q_2)$ such that all vertices in $R$ except one endpoint have a subtree contained in $X'$.  Let $w_5$ be the endpoint of $R$ such that $S(w_5)$ intersects $V(Q_2)$, and note that each path in $\PP_3$ that avoids $w_5$ contains a vertex from $R$ whose subtree is contained in $X'$.  Let $\PP'$ be the set of paths $P\in \PP_3$ that avoid $w_5$, and note that $X'$ has the core capture property for $\PP'$.

Let $A=\{w_1,w_2,w_3,w_4,w_5\}\cup B$ and let $P\in\PP$.  If $P$ avoids $w_1$ and $w_2$, then $P\in \PP_2 = \PP_3\cup \PP_4$.  If $P\in\PP_4$, then $P$ contains a vertex in $B$.  If $P\in \PP_3$, then either $P$ contains $w_5$ or $P\in\PP'$.
\end{proof}

Next, we give our upper bound on $\lpt(G)$ for a connected chordal graph $G$ in terms of the Cutter--Chooser game.

\begin{theorem}\label{thm:lpt-game}
If $G$ is a connected chordal graph with minimal tree representation $T$, then $\lpt(G) \le (4\ccg{T}+5)(\ccg{T})$.
\end{theorem}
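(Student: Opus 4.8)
The plan is to mirror the proof of \Cref{thm:lct-upper-game} almost verbatim, substituting \Cref{lem:allpaths} for \Cref{lem:cycle-trans} as the per-round engine. We may assume $|V(G)|\ge 2$, as otherwise $\lpt(G)=1$ and the bound is immediate. First I would let $x$ be the root vertex chosen by Cutter under optimal play, so that the Cutter--Chooser game on $T$ with root $x$ lasts exactly $\ccg{T}$ rounds, and I would take $\PP$ to be the family of all longest paths in $G$. Since $G$ is connected with at least two vertices, every longest path has at least one edge $uv$, whose endpoints satisfy $V(\subtree{u}\cap\subtree{v})\ne\emptyset$, giving a nonempty core contained in $V(T)$; hence $T=X_0$ has the core capture property for $\PP_0=\PP$, which seeds the iteration.

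Next I would simulate the game, using the optimal Cutter strategy and letting the component returned by \Cref{lem:allpaths} serve as Chooser's move. Maintaining the invariant that at the start of round $i$ the subtree $X_{i-1}$ (rooted at some $x_{i-1}$) has the core capture property for $\PP_{i-1}$, I apply \Cref{lem:allpaths} to the path $Q$ along which Cutter cuts. This produces a set $A_i\subseteq V(G)$ with $|A_i|\le 4(\ccg{X_{i-1},x_{i-1}})+5$ together with either $\PP_i=\emptyset$ or a component $X_i$ of $X_{i-1}-V(Q)$ having the core capture property for the family $\PP_i$ of paths in $\PP_{i-1}$ disjoint from $A_i$. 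When $\PP_i=\emptyset$ the iteration halts with $t=i$; otherwise I have Chooser select $X_i$ and proceed to round $i+1$ with $(\PP_i,X_i)$, so the invariant persists. Because Cutter plays optimally, the process terminates after $t\le\ccg{T}$ rounds with $\PP_t=\emptyset$.

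The only point requiring care---though it is not a genuine obstacle---is bounding $|A_i|$ uniformly, since the size guarantee from \Cref{lem:allpaths} involves the local quantity $\ccg{X_{i-1},x_{i-1}}$. Here I would invoke \Cref{prop:ccg-mono}: each $X_{i-1}$ is a rooted subtree of $T$, and because $x$ was chosen to realize $\ccg{T,x}=\ccg{T}$, monotonicity gives $\ccg{X_{i-1},x_{i-1}}\le\ccg{T}$, so $|A_i|\le 4\ccg{T}+5$ in every round. Finally, setting $A=\bigcup_{i=1}^t A_i$, each longest path $P$ lies in $\PP_{i-1}\setminus\PP_i$ for a unique $i$ and therefore meets $A_i\subseteq A$, so $A$ is a longest path transversal; summing yields $|A|\le t(4\ccg{T}+5)\le(4\ccg{T}+5)(\ccg{T})$. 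In short, the entire difficulty is concentrated in \Cref{lem:allpaths}, and the theorem is just the divide-and-conquer iteration of that lemma driven by the Cutter--Chooser game, exactly as in the longest-cycle case, with the extra factor of $\ccg{T}$ arising because each round now contributes a transversal of size $O(\ccg{T})$ rather than a constant.
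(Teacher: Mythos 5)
Your proposal is correct and follows essentially the same argument as the paper: root $T$ at a vertex realizing $\ccg{T}$, note that nonempty cores give $T$ the core capture property for the longest paths, iterate \Cref{lem:allpaths} as Chooser's moves in a simulated Cutter--Chooser game against an optimal Cutter, and bound each $|A_i|$ uniformly by $4\ccg{T}+5$ via \Cref{prop:ccg-mono}. The only cosmetic difference is that you make explicit the per-round invariant and the uniqueness of the index $i$ with $P\in\PP_{i-1}\setminus\PP_i$, which the paper leaves implicit.
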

\begin{proof}        
    Let $r$ be the vertex in $T$ that minimizes $\ccg{T,r}$; we view $T$ as a rooted tree with root $r$.  Let $\PP$ be the family of longest paths in $G$.  We may assume that each $P\in\PP$ has an edge, or else $G$ is a single vertex and $\lpt(G)=1$.  Since each $P\in\PP$ has an edge and therefore a nonempty core, it follows that $T$ has the core capture property for $\PP$.  
    
    We apply \Cref{lem:allpaths} once per round of a simulated Cutter--Chooser game to obtain the sequence $(\PP_0,X_0), \ldots, (\PP_t,X_t)$ and $A_1,\ldots,A_t$ such that $\PP = \PP_0 \supseteq \cdots \supseteq \PP_t$, $T=X_0 \supseteq \cdots \supseteq X_t$, the subtree $X_i$ has the core capture property for $\PP_i$ for $i \ge 0$, and each path in $\PP_{i-1} \setminus \PP_{i}$ intersects $A_i$ for $i\ge 1$.  In our simulated game, we use an optimal strategy for Cutter, and we have Chooser play according to the choice of $X'$ made by \Cref{lem:allpaths}.  Initially, we set $(\PP_0,X_0) = (\PP, X)$.  
    
    At the start of round $i$, Cutter is presented with the subtree $X_{i-1}$ rooted at $x_{i-1}$.  Let $z\in X_{i-1}$ be the vertex selected by Cutter in the simulated game, and let $Q$ be the $xz$-path in $X_{i-1}$. By \Cref{lem:allpaths}, there is a set $A_i\subseteq V(G)$ with $|A_i| \le 4\ccg{X_{i-1},x_{i-1}} + 5$ such that the family $\PP_i$ of paths in $\PP_{i-1}$ that are disjoint from $A_i$ is empty or there is a component $X_i$ of $X_{i-1} - V(Q)$ that has the core capture property for $\PP_i$.  Note that by \Cref{prop:ccg-mono}, we have $|A_i| \le 4\ccg{X_{i-1},x_{i-1}}+ 5 \le 4\ccg{T,r}+5 \le 4\ccg{T} + 5$.  If $\PP_i = \emptyset$, then the iteration ends with $t=i$ (and the choice of $X_t$ is arbitrary).  Otherwise, we update our game to have Chooser pick the component $X_i$ of $X_{i-1} - V(Q)$ and begin the next round.
    
    The iteration ends with a pair $(\PP_t,X_t)$ such that $\PP_t$ is empty.  Let $A=\bigcup_{i=1}^t A_i$ and note that $A$ is a longest path transversal for $G$.  Since each $A_i$ has size at most $4\ccg{T}+5$ and $t\le \ccg{T}$, the bound follows.  
\end{proof}

When $\ccg{T}$ is bounded by a constant, \Cref{thm:lpt-game} gives constant longest path transversals, as in the following corollary.

\begin{cor}\label{cor:lpt_subdiv_caterpillar}
If $G$ is a connected chordal graph admitting a host tree $T$ such that $T$ is a subdivided caterpillar, then $\lpt(G)\le 26$.
\end{cor}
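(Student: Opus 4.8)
The plan is to combine the subdivided caterpillar structure with \Cref{thm:lpt-game}. The key observation is that a subdivided caterpillar has a small Cutter--Chooser value: as the paragraph following \Cref{thm:lct-upper-game} notes, $\ccg{T}\le 2$ whenever $T$ is a subdivided caterpillar. So first I would establish $\ccg{T}\le 2$ for our host tree $T$, which reduces the whole problem to plugging a constant into the bound from \Cref{thm:lpt-game}.

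There is one subtlety to address before applying \Cref{thm:lpt-game}: that theorem requires a \emph{minimal} tree representation, whereas the corollary only assumes $G$ admits \emph{some} subdivided caterpillar host tree $T$. I would handle this by passing to a minimal tree representation $T'$ of $G$. The point is that $\ccg{\cdot}$ behaves well under the operations relating a tree representation to its minimal version: a minimal tree representation is obtained by contracting edges whose bags are nested, and contracting edges cannot increase the Cutter--Chooser value of a tree (an edge contraction only makes the host tree ``more caterpillar-like,'' and contracting edges of a subdivided caterpillar yields another subdivided caterpillar). Hence if $G$ has a subdivided caterpillar host tree, its minimal tree representation $T'$ is also a subdivided caterpillar, so $\ccg{T'}\le 2$. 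Then \Cref{thm:lpt-game} gives
\[
\lpt(G)\le (4\ccg{T'}+5)(\ccg{T'}) \le (4\cdot 2 + 5)(2) = 26.
\]

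The main obstacle I anticipate is justifying the passage to the minimal representation cleanly, namely verifying that contracting an edge of a subdivided caterpillar (as happens when forming $T'$) again produces a subdivided caterpillar, so that the Cutter--Chooser bound $\ccg{T'}\le 2$ is available. This is intuitively clear since contraction preserves the spine-plus-pendant-paths structure, but it is the one place where the ``some host tree'' versus ``minimal host tree'' gap must be bridged carefully. Everything else is a direct substitution into \Cref{thm:lpt-game}, using the fact that $\ccg{T}\le 2$ for subdivided caterpillars established earlier, so once the minimality issue is dispatched the arithmetic $(4\cdot 2+5)\cdot 2 = 26$ completes the proof.
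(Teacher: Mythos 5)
Your proposal is correct and follows essentially the same route as the paper: the paper's own proof of \Cref{cor:lpt_subdiv_caterpillar} is precisely the observation that $\ccg{T}\le 2$ when $T$ is a subdivided caterpillar, plugged into \Cref{thm:lpt-game} to give $(4\cdot 2+5)\cdot 2=26$. Your additional step---passing to a minimal tree representation via the nested-bag edge contractions described in \Cref{sec:prelim} and checking that contracting an edge of a subdivided caterpillar yields a subdivided caterpillar---correctly fills in a detail that the paper's one-line proof leaves implicit, since \Cref{thm:lpt-game} is stated only for minimal tree representations while the corollary assumes merely that some subdivided caterpillar host tree exists.
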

\begin{proof}
    When $T$ is a subdivided caterpillar, we have $\ccg{T}\le 2$, and the bound follows.
\end{proof}

We are finally able to give our proof for \Cref{thm:lpt_bound}. We make no attempt to optimize the multiplicative constant $4$ on the leading $\lg^2 n$ term.  

\begin{theorem}\label{thm:lpt_bound}
    If $G$ is an connected $n$-vertex chordal graph, then $\lpt(G)\le 4\lg^2 n + O(\log n)$.  
\end{theorem}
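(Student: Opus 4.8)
The plan is to combine the two previous results in this section, namely \Cref{thm:lpt-game} and \Cref{prop:ccg-bound}, in essentially the same way that \Cref{thm:lct_bound} was derived from \Cref{thm:lct-upper-game} and \Cref{prop:ccg-bound}. First I would reduce to bounding $|V(T)|$ for a minimal tree representation $T$ of $G$. Since $G$ is an $n$-vertex chordal graph, it has at most $n$ maximal cliques (as cited earlier, e.g.\ \cite{G72}), and in a minimal tree representation the bags of $T$ are exactly the maximal cliques of $G$ (established in the discussion preceding \Cref{lem:bigbags}). Hence $|V(T)|\le n$, and consequently \Cref{prop:ccg-bound} gives $\ccg{T}\le 1+\floor{\lg n}$.

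Next I would simply substitute this bound on $\ccg{T}$ into the estimate from \Cref{thm:lpt-game}, which states $\lpt(G)\le(4\ccg{T}+5)(\ccg{T})$. Writing $c=\ccg{T}\le 1+\floor{\lg n}$, this yields
\[
\lpt(G)\le (4c+5)c = 4c^2 + 5c.
\]
Since $c\le 1+\lg n$, the leading term is $4c^2\le 4(1+\lg n)^2 = 4\lg^2 n + 8\lg n + 4$, and the lower-order term $5c$ is $O(\log n)$. Collecting everything, $\lpt(G)\le 4\lg^2 n + O(\log n)$, as claimed. The single explicit constant $4$ on the leading $\lg^2 n$ term comes directly from the product $(4c)(c)$, matching the remark in the statement that no effort is made to optimize this constant.

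There is essentially no hard step here: all the substantive work has already been done in \Cref{thm:lpt-game}, where the divide-and-conquer iteration of \Cref{lem:allpaths} along an optimal Cutter strategy was carried out, and in \Cref{prop:ccg-bound}, where the logarithmic bound on the number of game rounds was proved. The only point requiring a small amount of care is confirming that the $n$-vertex hypothesis on $G$ translates correctly into an $n$-vertex bound on the host tree $T$; this rests on the maximal-clique count for chordal graphs and the identification of bags with maximal cliques in minimal representations, both already available in the excerpt. Thus the proof is a short two-line deduction assembling the earlier machinery.
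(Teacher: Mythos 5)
Your proposal is correct and follows exactly the paper's own proof: a minimal tree representation $T$ has at most $n$ vertices (via the maximal-clique count and the bag/clique correspondence), so \Cref{prop:ccg-bound} gives $\ccg{T}\le 1+\floor{\lg n}$, and substituting into \Cref{thm:lpt-game} yields the bound. Your expansion of $(4c+5)c$ is just a slightly more explicit writing of the paper's final inequality.
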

\begin{proof}
    Let $T$ be a minimal tree representation for $G$.  Since $T$ is a minimal tree representation, the vertices in $T$ correspond to maximal cliques in $G$, and $G$ has at most $n$ maximal cliques (see  \cite{G72}).  Hence $|V(T)|\le n$.  It follows from \Cref{prop:ccg-bound} and \Cref{thm:lpt-game} that $\lpt(G)\le (4(1+\floor{\lg n}) + 5)(1+\floor{\lg n}) \le 4\lg^2 n + O(\log n)$.
\end{proof}

\section{Leafage Bounds} \label{sec:leafage}

A \emph{leaf} in a graph is a vertex of degree $1$.  The \emph{leafage} of a chordal graph $G$, denoted $\leafage{G}$, is the minimum number of leaves in a tree representation of $G$. Clearly, $\leafage{G} = 0$ if and only if $G$ is complete, and $\leafage{G}\le 2$ if and only if $G$ is an interval graph.  The notion of leafage was introduced by Lin, McKee, and West~\cite{LTW98} as a measure on how far a chordal graph is from being an interval graph. Habib and Stacho~\cite{HS09} gave a polynomial time algorithm for computing $\leafage{G}$.  Since each connected interval graph $G$ satisfies $\lpt(G)=1$, it is natural to expect that connected chordal graphs with small leafage have small longest path transversals.

Let $T$ be a tree with root $r$.  If $uv\in E(T)$ and $u$ is on the $rv$-path in $T$, then we say that $v$ is a \emph{child} of $u$.  A vertex with no children is \emph{childless}.  Note that for each vertex $u\in V(T)$ except $r$, we have that $u$ is a leaf in $T$ if and only if $u$ is childless.  It follows that the number of leaves and the number of childless vertices differs by at most $1$.

\begin{lemma}\label{lem:tree_path_divide_leaves}
    Let $T$ be a tree with root $r$ and $k$ childless vertices.  There exists a path $P$ such that $r$ is an endpoint of $P$ and every component of $T - V(P)$ has at most $k/2$ childless vertices.
\end{lemma}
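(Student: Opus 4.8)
The plan is to choose $P$ greedily as a ``heavy path'' from the root, descending at each step into the child subtree containing the most childless vertices, and then to verify that this single path already handles both the subtrees hanging off of $P$ and the possibility that $P$ itself fails to be long enough. First I would set up the descent: starting at $r=v_0$, having reached $v_i$ I look at the children of $v_i$; if $v_i$ is childless I stop, and otherwise I let $v_{i+1}$ be the child whose rooted subtree contains the maximum number of childless vertices (breaking ties arbitrarily). This produces a path $P=v_0 v_1 \cdots v_m$ from $r$ to a childless vertex $v_m$.

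The key step is the accounting on the components of $T - V(P)$. Each such component is a rooted subtree hanging off some $v_i$ at a child $u$ that was \emph{not} selected, i.e.\ $u\neq v_{i+1}$. I would write $c(Z)$ for the number of childless vertices in a rooted subtree $Z$. By the greedy choice, the subtree rooted at $v_{i+1}$ has at least as many childless vertices as the subtree rooted at any competing child $u$ of $v_i$. The crucial inequality is that the childless vertices of the subtree rooted at $v_{i+1}$ together with those in the discarded component rooted at $u$ are disjoint subsets of the childless vertices of the subtree rooted at $v_i$, so $c(\text{comp at }u)+c(\text{subtree at }v_{i+1})\le c(\text{subtree at }v_i)$. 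Combined with $c(\text{comp at }u)\le c(\text{subtree at }v_{i+1})$ from the greedy choice, this gives $2\,c(\text{comp at }u)\le c(\text{subtree at }v_i)\le k$, hence $c(\text{comp at }u)\le k/2$, which is exactly the desired bound for every component.

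The subtle point — and where I expect the main obstacle — is bookkeeping at the top, near the root, to be sure the bound $c(\text{subtree at }v_i)\le k$ is legitimate at every level: the subtree rooted at $v_0=r$ is all of $T$, which has $k$ childless vertices, so the inequality holds at level $0$, and for $i\ge 1$ we only ever need $c(\text{subtree at }v_i)\le k$, which is automatic since it is a subtree of $T$. One must also be careful that a ``childless'' vertex of $T$ may fail to be childless within a smaller rooted subtree only if it is that subtree's root — but since every subtree we consider is rooted at a vertex of $T$ that does have children (except the terminal $v_m$), the childless vertices of any subtree $Z$ we count are exactly the childless vertices of $T$ lying in $Z$, so the counts compose cleanly. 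With these two observations the disjointness-plus-greedy inequality applies uniformly, and the path $P$ from $r$ to $v_m$ witnesses the lemma.
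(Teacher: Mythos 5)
Your proof is correct and takes essentially the same approach as the paper: both track, for each vertex $v$, the number $c_v$ of childless vertices of $T$ in the subtree rooted at $v$, and both exploit the fact that the $c$-values of distinct children of a vertex count disjoint sets, so a non-selected (resp.\ non-heavy) child has $c$-value at most $k/2$. The only cosmetic difference is that the paper takes $P$ to be the set $\{v : c_v > k/2\}$, which is automatically a path starting at $r$, while you extend a greedy heaviest-child descent all the way to a childless vertex; both paths satisfy the lemma.
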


\begin{proof}
    For each vertex $v\in V(T)$, let $c_v$ be the number of childless vertices in $T$ that belong to the subtree rooted at $v$.  The set of vertices $v$ such that $c_v > k/2$ induces a path $P$ with $r$ as an endpoint, and each component of $T-V(P)$ has at most $k/2$ childless vertices.
\end{proof}

As Cutter can use Lemma~\ref{lem:tree_path_divide_leaves} to select a path in the Cutter-Chooser game, the following is an immediate corollary to Theorem~\ref{thm:lct-upper-game} and Theorem~\ref{thm:lpt-game}. We omit the details.

\begin{cor}\label{cor:ccg_leafage_bound}
    Let $G$ be a chordal graph and let $m = \ell(G)$.  If $G$ is connected, then $\lpt(G) \le O(\log^2 m)$.  If $G$ is $2$-connected, then $\lct(G) \le O(\log m)$.
\end{cor}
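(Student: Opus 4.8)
The plan is to reduce the leafage bound to the Cutter--Chooser game bound, just as \Cref{thm:lpt_bound} and \Cref{thm:lct_bound} reduced the $\log n$ bounds to \Cref{prop:ccg-bound}. The key observation is that \Cref{thm:lct-upper-game} and \Cref{thm:lpt-game} do not actually care about the number of vertices in $T$; they only care about the number of rounds in the Cutter--Chooser game. So I would take a tree representation $T$ of $G$ with exactly $m = \ell(G)$ leaves, and prove that $\ccg{T} = O(\log m)$. Feeding this into \Cref{thm:lpt-game} gives $\lpt(G) \le (4\ccg{T}+5)(\ccg{T}) = O(\log^2 m)$, and feeding it into \Cref{thm:lct-upper-game} gives $\lct(G) \le 4\ccg{T} = O(\log m)$, which are exactly the two claimed bounds.

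The heart of the matter is therefore to bound $\ccg{T}$ in terms of the number of leaves rather than the number of vertices, and this is precisely what \Cref{lem:tree_path_divide_leaves} is set up to deliver. First I would fix a root $r$ for $T$, so that $T$ has $k$ childless vertices with $k \le m+1$ (the number of childless vertices and the number of leaves differ by at most $1$). Then I would describe Cutter's strategy: at the start of each round, when presented with a rooted subtree $X$ with root $x$, Cutter applies \Cref{lem:tree_path_divide_leaves} to $X$ (viewed as rooted at $x$) to select a path $P$ with endpoint $x$ so that every component of $X - V(P)$ has at most half as many childless vertices as $X$ does. Cutter cuts along this path, and whatever component $X'$ Chooser selects has at most $\lceil k/2 \rceil$ childless vertices. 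Since the number of childless vertices is halved each round and a subtree with a single childless vertex is a path (ending the game in one more move), the number of rounds is $O(\log k) = O(\log m)$.

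One subtlety I would be careful about is that \Cref{lem:tree_path_divide_leaves} produces a path $P$ with $r$ as an \emph{endpoint}, which is exactly the form required by the Cutter--Chooser game (Cutter picks a vertex $z$ and cuts along the root-to-$z$ path $Q$). So the path selected by \Cref{lem:tree_path_divide_leaves} corresponds to a legal Cutter move with $z$ taken to be the other endpoint of $P$. A second point is that childless vertices in a rooted subtree $X$ of $T$ are also childless in $T$, so the count of childless vertices can only decrease as we pass to subtrees; this guarantees the halving recursion is valid round to round. Since the tree representation here need not be minimal (a minimal representation need not minimize the number of leaves), I would emphasize that \Cref{thm:lct-upper-game} and \Cref{thm:lpt-game} only use minimality through \Cref{lem:bigbags}, and I would either verify that the chosen leafage-minimizing representation can be taken minimal as well, or note that the argument goes through with any tree representation of $G$. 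This bookkeeping between ``minimal'' and ``leafage-minimal'' representations is the main obstacle I anticipate; the combinatorial core, the $O(\log m)$ bound on the game via \Cref{lem:tree_path_divide_leaves}, is routine once the halving is set up.
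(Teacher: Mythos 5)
Your proposal is correct and is essentially the paper's own (omitted) argument: Cutter plays the path supplied by \Cref{lem:tree_path_divide_leaves} in each round, giving $\ccg{T} = O(\log m)$, and the two bounds then follow from \Cref{thm:lct-upper-game} and \Cref{thm:lpt-game}. On the minimality subtlety you flag, the first of your two options is the right one---contracting edges to turn a leafage-optimal representation into a minimal one never increases the number of leaves---while the second option does not work as stated, since \Cref{lem:cycle-trans}, \Cref{lem:roundpath}, and \Cref{lem:allpaths} all invoke \Cref{lem:bigbags}, which requires minimality.
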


We now proceed with a slightly more delicate analysis regarding leafage. For a tree $T$ on at least $2$ vertices, let $\mmf{T}$ be the maximum, over all $e\in E(T)$, of the minimum number of leaves of $T$ contained in a component of $T-e$.  For a connected chordal graph $G$ with tree representation $T$, we show that $\lpt(G)\le \mmf{T}$. Balister, Gy\"ori, Lehel, and Schelp \cite{BGLS04} observed that tree representations of connected chordal graphs contain longest path transversal bags; we include a proof for completeness. 
\begin{proposition}[Balister--Gy\"ori--Lehel--Schelp \cite{BGLS04}]\label{prop:LPTBag}
    Let $T$ be a tree representation of a connected chordal graph $G$.  There is a vertex $x\in V(T)$ such that $\bag{x}$ is a longest path transversal of $G$.
\end{proposition}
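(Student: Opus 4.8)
The plan is to exploit the Helly property for subtrees of $T$ together with the fact that the longest paths of a connected graph pairwise intersect. Concretely, I want to find a single vertex $x \in V(T)$ whose bag $\bag{x}$ meets every longest path. Since $\bag{x}$ is exactly the set of $u \in V(G)$ with $x \in V(\subtree{u})$, asking that $\bag{x}$ intersects a path $P$ is the same as asking that some vertex of $P$ has its subtree passing through $x$, i.e. that $x \in V(\subtree{P})$ where $\subtree{P} = \bigcup_{u \in V(P)} S(u)$. Because each $P$ is connected (it is a path in $G$), $\subtree{P}$ is a connected subtree of $T$. So the goal reduces to finding a vertex $x$ common to all the subtrees $\{\subtree{P} \st P \text{ a longest path in } G\}$.

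The key step is then to verify that these subtrees $\subtree{P}$ pairwise intersect, after which the Helly property (stated in the excerpt) immediately yields the desired common vertex $x$. To see pairwise intersection, let $P$ and $P'$ be two longest paths in the connected graph $G$. Since longest paths in a connected graph pairwise intersect (as noted in the introduction), there is a vertex $v \in V(P) \cap V(P')$. Then $S(v) \subseteq \subtree{P}$ and $S(v) \subseteq \subtree{P'}$, and $S(v)$ is nonempty, so $\subtree{P} \cap \subtree{P'} \neq \emptyset$. Thus $\{\subtree{P}\}$ is a family of pairwise-intersecting subtrees of $T$, and by Helly there exists $x \in V(T)$ with $x \in V(\subtree{P})$ for every longest path $P$. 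For each such $P$, the containment $x \in V(\subtree{P})$ means $x \in V(\subtree{u})$ for some $u \in V(P)$, which gives $u \in \bag{x} \cap V(P)$. Hence $\bag{x}$ is a longest path transversal.

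I expect the argument to be essentially immediate once the reduction is set up, so the only real care is in the degenerate cases. If $G$ is a single vertex, then the unique longest path is trivial and any bag works; I should note (or silently invoke) that for $|V(G)| \ge 2$ every longest path has an edge and hence a nonempty subtree, so $\subtree{P}$ is genuinely a nonempty connected subtree. The main (mild) obstacle is simply confirming that the pairwise intersection of the $P$'s transfers to pairwise intersection of the $\subtree{P}$'s, which is clear because a shared \emph{vertex} $v$ contributes the shared nonempty subtree $S(v)$; no shared edge is needed. No minimality of $T$ is required for this proposition — it holds for an arbitrary tree representation — so I would phrase it for a general host tree exactly as stated.
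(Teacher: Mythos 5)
Your proof is correct and follows essentially the same route as the paper's: both arguments note that longest paths in a connected graph pairwise intersect, deduce that the subtrees $\subtree{P}$ pairwise intersect (via the subtree $S(v)$ of a shared vertex $v$), and then apply the Helly property for subtrees of a tree to produce the common vertex $x$ whose bag is the transversal. Your additional remarks on degenerate cases and the non-necessity of minimality of $T$ are accurate but not needed beyond what the paper's proof already covers.
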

\begin{proof}
    Let $\PP$ be the family of longest paths in a connected chordal graph $G$. As longest paths are pairwise intersecting, for $P,Q\in \PP$, there exists $w \in V(P) \cap V(Q)$.  It follows that $\subtree{P}$ and $\subtree{Q}$ both contain $\subtree{w}$.  Therefore the set $\{\subtree{P}\st P\in\PP\}$ is a pairwise intersecting family of subtrees of $T$.  Since subtrees of a tree have the Helly property \cite{G04}, there exists $x\in V(T)$ such that for each $P\in\PP$, the vertex $x$ belongs to $\subtree{P}$.  It follows that $\bag{x}$ is a longest path transversal for $G$. 
\end{proof}

Let $T$ be a tree with $k$ leaves.  For $x,y\in V(T)$, we let $T[x,y]$ denote the path in $T$ with endpoints $x$ and $y$.  A \emph{finger} of $x$ is a path of the form $T[x,z]$, where $z$ is a leaf in $T$.  Note that every vertex of $T$ has $k$ fingers. Let $G$ be a connected chordal graph, $T$ be a minimal tree representation of $G$, and $x \in V(T)$.  A path $u_1\ldots u_t$ of $G$ is \emph{handy} with respect to $x$ if $u_1 \in \bag{x}$ but $u_j \not\in \bag{x}$ for $j>1$.  Note that if $Q$ is handy with respect to $x$ and $Q$ has initial vertex $u$, then $\subtree{Q-u}$ is contained in a component of $T-x$.  

\begin{proposition}\label{prop:Replace}
    Let $T$ be a tree representation for a connected chordal graph $G$, and let $x \in V(T)$ such that $\bag{x}$ is a longest path transversal of $G$.  If $Q$ is a handy path with respect to $x$ of maximum size, then $V(Q)$ is a longest path transversal in $G$.
\end{proposition}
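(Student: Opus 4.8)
The plan is to argue by contradiction with a single path-exchange. Write $Q = u_1 u_2 \cdots u_t$, so that $u_1 \in \bag{x}$ and $u_j \notin \bag{x}$ for $j \ge 2$, and suppose toward a contradiction that some longest path $P = p_1 p_2 \cdots p_m$ satisfies $V(P) \cap V(Q) = \emptyset$. Since $\bag{x}$ is a longest path transversal by hypothesis, $P$ must meet $\bag{x}$; I would let $v = p_a$ be the \emph{first} vertex of $P$ lying in $\bag{x}$, so that $p_1, \ldots, p_{a-1} \notin \bag{x}$. The goal is to use $Q$ to build a path strictly longer than $P$.

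The key observation is that the reversed prefix $p_a p_{a-1} \cdots p_1$ is itself a handy path with respect to $x$: its initial vertex $v = p_a$ lies in $\bag{x}$, while every later vertex lies outside $\bag{x}$ by the choice of $a$. This handy path has $a$ vertices, so the maximality of $Q$ forces $|V(Q)| = t \ge a$. I would then splice $Q$ onto the suffix of $P$ at $v$: because $u_1 \in V(Q)$ while $v \in V(P)$ and $V(P) \cap V(Q) = \emptyset$, we have $u_1 \ne v$, and as $u_1$ and $v$ both lie in the clique $\bag{x}$ the edge $u_1 v$ is present in $G$. Thus $R = u_t u_{t-1} \cdots u_1\, v\, p_{a+1} \cdots p_m$ is a walk in $G$ whose two pieces $\{u_1,\ldots,u_t\}$ and $\{p_a,\ldots,p_m\}$ are disjoint precisely because $P$ avoids $V(Q)$, so $R$ is a genuine path. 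A vertex count would give $|V(R)| = t + (m - a + 1) \ge m + 1 > |V(P)|$, contradicting that $P$ is longest; hence no longest path avoids $V(Q)$, and $V(Q)$ is a longest path transversal.

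I do not expect a serious obstacle, since the estimate is short; the only points that need care are structural rather than computational. First, $v$ must be chosen as the \emph{first} bag vertex along $P$ — an arbitrary bag vertex would leave earlier bag vertices in the prefix and spoil handiness, which is exactly where the maximality of $Q$ is applied. Second, one must check that $R$ is a simple path and not merely a closed-up walk: simplicity is guaranteed by $V(P) \cap V(Q) = \emptyset$, and the joining edge $u_1v$ exists because $u_1 \ne v$ and bags are cliques. The bookkeeping stays valid in the extreme cases $a = 1$ (where $v$ is an endpoint of $P$ and $Q$ is simply prepended) and $a = m$. A symmetric argument using the suffix of $P$ after its last bag vertex would work equally well but is not needed.
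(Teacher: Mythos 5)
Your proof is correct and is essentially the paper's own argument: both proceed by taking the extreme vertex of $P$ in $\bag{x}$, noting that the remaining piece of $P$ is itself handy so maximality of $Q$ bounds its length, and then splicing $Q$ in via the clique edge inside $\bag{x}$ to get a longer path. The only difference is cosmetic -- you cut $P$ at its \emph{first} bag vertex and replace the prefix, while the paper cuts at the \emph{last} bag vertex and replaces the suffix.
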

\begin{proof}
Let $Q$ be a handy path with respect to $x$ of maximum size, with initial vertex $u$.  Suppose for a contradiction that $P$ is a longest path in $G$ that is disjoint from $Q$.  Since $\bag{x}$ is a longest path transversal, it follows that $P$ contains a vertex in $\bag{x}$.  Let $P_0$ be the suffix of $P$ whose initial vertex $w$ is in $\bag{x}$ but no other vertex in $P_0$ is in $\bag{x}$.  Since $P_0$ is handy with respect to $x$, we have that $|V(P_0)| \le |V(Q)|$ and so $Q$ has more vertices than $P_0 - w$.  Since $w,u \in \bag{x}$, we have that $wu\in E(G)$.  Replacing the suffix $P_0 - w$ of $P$ with $Q$ gives a longer path in $G$, contradicting the maximality of $P$.
\end{proof}

Let $T$ be a tree representation for a chordal graph $G$ and let $x,y\in V(T)$.  We say that $u\in V(G)$ is \emph{maximal from $x$ toward $y$} if $u \in \bag{x}$ and among all vertices $w\in \bag{x}$, the vertex $u$ maximizes $|V(S(w)) \cap V(T[x,y])|$. 

\begin{lemma}\label{lem:maxtri}
Let $G$ be a connected chordal graph with tree representation $T$, and let $ww'\in E(G)$.  Let $x\in V(T)$ and suppose that $w\in \bag{x}$ but $w'\not\in\bag{x}$.  Let $X$ be the component of $T-x$ containing $\subtree{w'}$.  There is a leaf $y$ in $T$ belonging to $X$ such that every vertex in $G$ distinct from $w$ which is maximal from $x$ toward $y$ completes a triangle with $ww'$.
\end{lemma}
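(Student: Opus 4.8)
The plan is to reduce the lemma to finding a single ``gateway'' vertex $p$ of $T$ with three properties: every path from $x$ into $\subtree{w'}$ passes through $p$, the subtree $\subtree{w}$ contains $p$, and $p$ lies on $T[x,y]$ for an appropriately chosen leaf $y$. Once $p$ is pinned down, the choice of $y$ will be any leaf of $T$ lying beyond $p$, and the defining maximality of ``maximal from $x$ toward $y$'' will force the relevant subtree to reach at least as far as $p$.

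First I would set up the gateway. Since $w'\notin \bag{x}$ we have $x\notin V(\subtree{w'})$, so the subtree $\subtree{w'}$ avoids $x$ and is therefore contained in the single component $X$ of $T-x$. Rooting $T$ at $x$, let $p$ be the vertex of $\subtree{w'}$ nearest to $x$; because $\subtree{w'}$ is a connected subtree avoiding the root, $p$ is well defined and every vertex of $\subtree{w'}$ is a descendant of $p$, so the $x$-to-$a$ path passes through $p$ for every $a\in V(\subtree{w'})$. Next I would note that $\subtree{w}$ reaches $p$: as $ww'\in E(G)$, the subtrees $\subtree{w}$ and $\subtree{w'}$ share a vertex $a\in V(X)$, and since $w\in\bag{x}$ gives $x\in V(\subtree{w})$, the connected subtree $\subtree{w}$ contains both $x$ and $a$, hence the entire $x$-to-$a$ path, which runs through $p$; thus $p\in V(\subtree{w})$. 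Finally, the subtree of $T$ rooted at $p$ is nonempty, so it contains a childless vertex, which is a leaf of $T$ distinct from the root $x$; I take $y$ to be such a leaf. By construction $y$ is a descendant of $p$, so $y$ belongs to $X$ and the path $T[x,y]$ contains $p$.

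It then remains to verify the triangle for any $u\neq w$ that is maximal from $x$ toward $y$. Because $u,w\in\bag{x}$ and bags are cliques, $uw\in E(G)$; and $u\neq w'$ since $x\in V(\subtree{u})$ while $x\notin V(\subtree{w'})$, so $u,w,w'$ are distinct. For the edge $uw'$, I observe that $\subtree{u}$ contains $x$, so its intersection with $T[x,y]$ is a prefix of that path. Maximality gives $|V(\subtree{u})\cap V(T[x,y])|\ge |V(\subtree{w})\cap V(T[x,y])|$, and since $p$ lies on $T[x,y]$ and in $\subtree{w}$, this prefix for $w$ reaches at least $p$; hence the (at least as long) prefix for $u$ also contains $p$, giving $p\in V(\subtree{u})$. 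As $p\in V(\subtree{w'})$, we conclude $\subtree{u}\cap\subtree{w'}\neq\emptyset$, so $uw'\in E(G)$ and $\{u,w,w'\}$ is a triangle.

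The step I expect to be the main obstacle is the gateway/projection argument: establishing cleanly that one vertex $p$ simultaneously lies on every $x$-to-$\subtree{w'}$ path, inside $\subtree{w}$, and on the chosen $T[x,y]$. This is where the tree structure and the nearest-vertex projection must be handled carefully; after that, the maximality comparison is just a short prefix-length inequality, and the clique property of $\bag{x}$ supplies the edge $uw$ for free.
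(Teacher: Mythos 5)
Your proof is correct and takes essentially the same approach as the paper's: both arguments produce a vertex of $T$ common to $\subtree{w}$ and $\subtree{w'}$, choose a leaf $y$ so that this vertex lies on $T[x,y]$, and then use the fact that $\subtree{u}\cap T[x,y]$ and $\subtree{w}\cap T[x,y]$ are prefixes of $T[x,y]$ together with the maximality of $u$ to conclude that $\subtree{u}$ also contains that vertex, giving the triangle. The only difference is that your ``gateway'' vertex $p$ (the point of $\subtree{w'}$ nearest to $x$) is an unnecessary refinement: the paper simply takes an arbitrary vertex $z\in V(\subtree{w}\cap\subtree{w'})$, which already suffices for the same prefix-length comparison.
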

\begin{proof}
    Since $ww'\in E(G)$, there is a vertex $z$ in $\subtree{w}\cap\subtree{w'}$, and note that $z\in V(X)$ also.  Let $y$ be a leaf in $T$ such that $z$ is on $T[x,y]$, and let $u$ be a vertex in $G$ such that $u\ne w$ but $u$ is maximal from $x$ toward $y$.  Since $z \in V(\subtree{w}) \cap V(T[x,y])$ and $u$ is maximal from $x$ to $y$, it follows that $z\in V(\subtree{u})$ also.  As $z$ is common to $\subtree{w}$, $\subtree{w'}$, and $\subtree{u}$, it follows that $\{w,w',u\}$ is a triangle in $G$.
\end{proof}

\begin{lemma}{\label{lem:Hand}} Let $G$ be a connected chordal graph with tree representation $T$ and let $x \in V(T)$ such that $\bag{x}$ is a longest path transversal of $G$. Let $Q$ be a handy path with respect to $x$ of maximum size with initial vertex $v$, and suppose that $|V(Q)| \ge 2$.  Let $X$ be the component of $T-x$ containing $\subtree{Q-v}$, and let $y_1,\ldots,y_k$ be the leaf vertices of $T$ in $X$.  For each $i$, let $u_i$ be a vertex in $\bag{x}$ that is maximal from $x$ toward $y_i$.  The set $\set{u_1,\ldots, u_k}$ is a longest path transversal for $G$.
\end{lemma}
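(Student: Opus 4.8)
The plan is to fix an arbitrary longest path $P$ and show that $P$ meets $\{u_1,\ldots,u_k\}$; so suppose for contradiction that $P$ is disjoint from this set. Since $\bag{x}$ is a longest path transversal, $P$ contains a vertex of $\bag{x}$. Write $Q=v\,q_1\cdots q_{t-1}$ with $t=|V(Q)|$, so that $q_1,\ldots,q_{t-1}$ are the vertices of $Q-v$ and each $\subtree{q_j}\subseteq X$. I would split into two cases according to whether $P$ descends into $X$, that is, whether $P$ has a vertex $u$ with $\subtree{u}\subseteq X$.

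First suppose such a $u$ exists. Walking along $P$ through a maximal run of vertices whose subtrees lie in $X$, the run cannot be all of $P$, as otherwise $P$ would avoid $\bag{x}$; hence at one end of the run I obtain an edge $ww'$ of $P$ with $w\in\bag{x}$, $w'\notin\bag{x}$, and $\subtree{w'}\subseteq X$, since the only way to leave a subtree contained in the component $X$ of $T-x$ is through a vertex whose subtree contains $x$. Applying \Cref{lem:maxtri} to $ww'$ produces a leaf of $X$, which is one of $y_1,\ldots,y_k$, say $y_i$; then $u_i$, being maximal from $x$ toward $y_i$, is either equal to $w$ or forms a triangle with $ww'$. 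In the first case $u_i=w\in V(P)$, contradicting disjointness; in the second case $u_i\notin V(P)$ is adjacent to both $w$ and $w'$, so inserting $u_i$ between $w$ and $w'$ yields a longer path, contradicting that $P$ is longest.

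Now suppose $P$ has no vertex with subtree contained in $X$. Then $P$ avoids every vertex of $Q-v$. Applying \Cref{lem:maxtri} to the first edge $v q_1$ of $Q$ gives a leaf $y_{i_0}$ of $X$ with $u_{i_0}$ adjacent to $q_1$ (the degenerate case $u_{i_0}=v$ causes no trouble, as $v q_1\in E(G)$); we may assume $u_{i_0}\notin V(P)$, since otherwise $P$ already meets the set. I would then take an endpoint $e$ of $P$ and let $c$ be the last vertex of $\bag{x}$ on $P$ on the way to $e$, which exists because $P$ meets $\bag{x}$. The subpath of $P$ from $c$ to $e$ is handy with respect to $x$, so by maximality of $Q$ it has at most $|V(Q)|$ vertices, hence at most $|V(Q)|-1$ vertices strictly after $c$. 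Replacing this suffix by $c\,u_{i_0}\,q_1\cdots q_{t-1}$ (using the clique edge $c u_{i_0}$, the edge $u_{i_0}q_1$, and the edges of $Q$) appends the $|V(Q)|$ new vertices $u_{i_0},q_1,\ldots,q_{t-1}$, all disjoint from the retained portion of $P$ since $u_{i_0}\notin V(P)$ and $P$ avoids $Q-v$; this produces a strictly longer path, a contradiction.

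The main obstacle is the second case: a longest path can pass through $\bag{x}$ and graze the boundary of $X$ without ever descending into it, so no crossing edge is available for the insertion trick of the first case. The key is to exploit the maximality of the handy path $Q$ to bound the handy suffix of $P$, and then reroute that suffix into $X$ through the attachment vertex $u_{i_0}$, gaining a vertex. Verifying that the rerouted segment is genuinely vertex-disjoint from the kept portion of $P$ — which relies on $P$ avoiding both $Q-v$ and all the $u_i$ — is the step that requires the most care.
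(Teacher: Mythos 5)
Your proof is correct and is essentially the paper's argument, reorganized: your Case 1 (finding a crossing edge $ww'$ into $X$ and inserting a triangle-completing vertex via \Cref{lem:maxtri}) is exactly the paper's main step. Your Case 2, which reroutes the handy suffix of $P$ through $c\,u_{i_0}\,q_1\cdots q_{t-1}$ using the maximality of $Q$, inlines what the paper accomplishes by first normalizing $Q$ so its initial vertex lies in $\{u_1,\ldots,u_k\}$ (again via \Cref{lem:maxtri}) and then invoking \Cref{prop:Replace}.
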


\begin{proof}
   Let $A=\set{u_1,\ldots,u_k}$, and let $Q=v_1\ldots v_t$, where $v=v_1 \in \bag{x}$.  We show that we may assume that $v_1\in A$.  Otherwise, since $v_1v_2 \in E(G)$ with $v_1\in \bag{x}$ and $v_2\not\in\bag{x}$, it follows from \Cref{lem:maxtri} that some $u\in A$ (distinct from $v_1$ since $v_1\not\in A$) completes a triangle with $v_1v_2$.  Therefore $uv_2\ldots u_k$ is also a handy path with respect to $x$ of maximum size whose initial vertex is in $A$.  
   
  Let $P$ be a longest path in $G$ and suppose for a contradiction that $V(P) \cap A = \emptyset$.  By \Cref{prop:Replace}, we have $V(P) \cap V(Q) \ne \emptyset$, and so $P$ contains a vertex $v_i$ in $Q$.  Since $v_1\in A$ and $P$ is disjoint from $A$, it must be that $i\ge 2$ and so $\subtree{v_i}\subseteq X$.  Additionally, since $\bag{x}$ is a longest path transversal, it follows that $P$ contains adjacent vertices $ww'$ such that $w\in\bag{x}$ and $\subtree{w
  '} \ss X$.  By \Cref{lem:maxtri}, some vertex in $A$ completes a triangle with $w$ and $w'$, and we obtain a longer path by inserting this vertex in $P$ between $w$ and $w'$.    
\end{proof}

Recall that $\mmf{T}$ is the maximum, over all $e\in E(T)$, of the minimum number of leaves of $T$ contained in a component of $T-e$. 

\begin{theorem}{\label{thm:LeafageBound}} If $G$ is a connected chordal graph with minimal tree representation $T$, then either $G$ is complete or $\lpt(G) \le \mmf{T}$.
\end{theorem}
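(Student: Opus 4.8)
The plan is to apply \Cref{lem:Hand} for a well-chosen transversal bag vertex $x$ and then show the resulting transversal has size at most $\mmf{T}$. First I would dispose of the complete case, so that $|V(T)|\ge 2$ and $\mmf{T}$ is defined. Using \Cref{prop:LPTBag}, fix a vertex $x\in V(T)$ with $\bag{x}$ a longest path transversal (call such $x$ \emph{good}). Since $G$ is connected but not complete, $\bag{x}$ is a proper maximal clique, so some vertex of $\bag{x}$ has a neighbor outside $\bag{x}$ and the longest handy path $Q$ with respect to $x$ has $|V(Q)|\ge 2$. Let $X$ be the component of $T-x$ entered by $Q$ and let $x'$ be its root. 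Then \Cref{lem:Hand} produces a longest path transversal whose size is the number $k$ of leaves of $T$ lying in $X$. Writing $L$ for the total number of leaves of $T$, the edge $xx'$ splits $T$ into $X$ (with $k$ leaves) and its complement (with $L-k$ leaves), so $\mmf{T}\ge\min\{k,L-k\}$; hence it suffices to find a good $x$ for which $X$ contains at most $L/2$ leaves.

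The main structural tool I would isolate is that, for a good $x$, the root $x'$ of the entered component $X$ is again good. The observation making this clean is that \emph{every} vertex of $Q$ has a subtree meeting $X$: the vertices of $Q$ after the initial one have subtrees contained in $X$ by construction, while the initial vertex $v$ satisfies $\subtree{v}\cap\subtree{w}\ne\emptyset\subseteq X$ for its successor $w$ on $Q$, and a subtree that contains $x$ and meets $X$ must contain the gateway $x'$. Now if $x'$ were not good, some longest path $P$ would omit $x'$, forcing $\subtree{P}$ to be disjoint from $X$ (it is connected, contains $x$, and avoids $x'$); then $P$ would avoid every vertex of $Q$, contradicting that $V(Q)$ is a longest path transversal by \Cref{prop:Replace}. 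Thus $x'$ is good, and in particular $X$ always contains a good vertex.

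With this lemma, the finish is a centroid argument. Let $c$ be a leaf-centroid of $T$, so every component of $T-c$ has at most $L/2$ leaves. If $c$ is good, take $x=c$: the entered component has at most $L/2$ leaves and we are done. Otherwise suppose, for contradiction, that every good $x$ has its entered component $X$ containing more than $L/2$ leaves. Such an $X$ is the unique heavy component of $T-x$, namely the one containing $c$; by the lemma its root---the neighbor of $x$ toward $c$---is good. Walking from any good vertex toward $c$ one step at a time, every vertex encountered is good, so $c$ itself is good; but then its entered component would be heavy, contradicting the centroid property of $c$. Hence some good $x$ has an entered component with at most $L/2$ leaves, and the transversal from \Cref{lem:Hand} has size at most $\mmf{T}$.

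I expect the only genuinely delicate point to be the good-root lemma, and specifically the realization that the initial vertex $v$ of $Q$ already reaches into $X$ through its successor; without this one is tempted to believe that a longest path avoiding $X$ still meets $Q$ at $v$, which is exactly what derails naive rerouting arguments and makes the heavy-component case look intractable. The remaining bookkeeping---existence of a leaf-centroid, that the heavy component of $T-x$ is the one toward $c$, and the leaf-count split across the edge $xx'$---is standard.
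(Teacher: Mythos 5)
Your proof is correct, and it takes a genuinely different route from the paper in its second half. Both arguments run on the same engine: \Cref{prop:LPTBag} supplies a ``good'' vertex (one whose bag is a longest path transversal), and \Cref{lem:Hand} converts a good vertex $x$ into a transversal of size at most the number of leaves of $T$ in the component of $T-x$ entered by a maximum handy path. The divergence is in how the final vertex or edge is located. The paper builds an auxiliary digraph on $V(T)$ in which each good vertex points to the root of its entered component, argues via \Cref{lem:Hand} that the resulting transversal lies in $\bag{x}\cap\bag{y}$ (so out-neighbors of good vertices are good), and then uses the acyclicity of $T$ to extract a pair $x,y$ pointing at each other; applying \Cref{lem:Hand} across that one edge in both directions gives $\lpt(G)\le\min\{m_x,m_y\}\le\mmf{T}$ straight from the definition of $\mmf{T}$. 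You instead prove the gateway-is-good step directly from \Cref{prop:Replace}, via the key observation that the initial vertex of $Q$ already lies in $\bag{x'}$ while all later vertices have subtrees inside $X$, so a longest path avoiding $\bag{x'}$ would have a subtree disjoint from $X$ and hence miss all of $V(Q)$; you then finish with a leaf-centroid and a monotone walk toward it. Both derivations of ``the neighbor toward the entered component is good'' are sound, and yours is arguably cleaner at that step since it bypasses the maximality machinery of \Cref{lem:Hand}. What the paper's two-cycle argument buys is that it needs no centroid: the mutually-pointing edge is exactly the object that $\mmf{T}$ measures. What your argument buys is that \Cref{lem:Hand} is invoked only once; the cost is the additional ingredient that a leaf-centroid exists (a vertex $c$ such that every component of $T-c$ contains at most $L/2$ leaves of $T$). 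That fact is true and standard---for instance, if every vertex had a component of $T-v$ with more than $L/2$ leaves, orienting each such vertex toward that component would force an infinite simple path, since the two sides of an edge cannot both contain more than $L/2$ leaves---but it is not proved in the paper (its \Cref{lem:tree_path_divide_leaves} is a different, path-based statement), so a complete write-up of your argument should include this short proof.
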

\begin{proof}
     If $|V(T)| = 1$, then $G$ is complete.  So we may assume $|V(T)|\ge 2$.  
    
    We show that for each $x\in V(T)$, a handy path with respect to $x$ of maximum size has at least $2$ vertices.  Let $x\in V(T)$ and let $y$ be a neighbor of $x$.  By minimality of $T$, we have that $v_y\in \bag{y}-\bag{x}$ and $v_x\in \bag{x}-\bag{y}$ for some $v_y,v_x\in V(G)$.  Every $v_xv_y$-path in $G$ contains a vertex in $\bag{x}\cap\bag{y}$.  Let $u\in \bag{x}\cap\bag{y}$, and note that $uv_y$ is a handy path with respect to $x$. 
    
    We construct an auxiliary digraph $H$ on $V(T)$ as follows.  For each $x\in V(T)$ such that $\bag{x}$ is a longest path transversal, we select a maximum handy path $Q$ with respect to $x$ with initial vertex $v$, and we direct an edge in $H$ from $x$ to $y$, where $y$ is the neighbor of $x$ belonging to the component of $T-x$ that contains $\subtree{Q-v}$.

    Note that in $H$, the outdegree of $x$ is $1$ if $\bag{x}$ is a longest path transversal of $G$ and $0$ otherwise.  By \Cref{prop:LPTBag}, there exists an edge in $H$.  Suppose $xy\in E(H)$.  By \Cref{lem:Hand}, we obtain a longest path transversal contained in $\bag{x}\cap\bag{y}$.  Hence $xy\in E(H)$ implies that $\bag{y}$ is a longest path transversal in $G$, and therefore $y$ also has outdegree at least $1$.  It follows that $H$ contains a directed cycle.  Since the underlying graph of $H$ is acyclic, it follows that $xy\in E(H)$ and $yx\in E(H)$ for some vertices $x,y\in V(H)$.

    Let $e$ be the edge $xy$ in $T$, let $T_x$ be the component of $T-e$ containing $x$, and let $T_y$ be the component of $T-e$ containing $y$.  Since $xy\in E(H)$, it follows from \Cref{lem:Hand} that maximal vertices from $x$ toward leaves in $T_y$ form a longest path transversal.  Similarly, maximal vertices from $y$ toward leaves in $T_x$ also form a longest path transversal.  Therefore $\lpt(G)\le \min\{m_x,m_y\} \le \mmf{T}$, where $m_x$ and $m_y$ are the number of leaves in $T$ belonging to $T_x$ and $T_y$, respectively.
\end{proof}

\begin{cor}\label{cor:substar}
The family of connected chordal graphs admitting a tree representation $T$ such that $T$ is a subdivided star is Gallai.
\end{cor}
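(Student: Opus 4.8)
The plan is to deduce the corollary directly from \Cref{thm:LeafageBound}. Since that theorem is phrased for a \emph{minimal} tree representation, the first step is to pass from the given subdivided star representation to a minimal one of the same shape. A subdivided star is exactly a tree with at most one vertex of degree $\ge 3$, and I would show this property survives minimalization. Starting from the given subdivided star representation $T$, I repeatedly contract an edge $xy$ whenever $\bag{x}\subseteq\bag{y}$, as in the discussion preceding \Cref{lem:bigbags}, stopping when no two adjacent bags are comparable. At that point no two bags are comparable at all: if $\bag{x}\subseteq\bag{y}$ for nonadjacent $x,y$ and $x'$ is the neighbor of $x$ on the $xy$-path, then every $u\in\bag{x}\subseteq\bag{y}$ has $x,y\in S(u)$, so the subtree $S(u)$ contains the $xy$-path and hence $x'$, giving $\bag{x}\subseteq\bag{x'}$; thus a comparable nonadjacent pair forces a comparable adjacent pair. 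So the result is minimal. Each contraction merges two adjacent vertices of a tree, and since the degree of the merged vertex is $\deg(u)+\deg(v)-2$, contracting an edge of a tree with at most one vertex of degree $\ge 3$ again produces such a tree; hence the minimal representation obtained this way is again a subdivided star.

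The second step is the computation $\mmf{T}=1$ for any subdivided star $T$ with $|V(T)|\ge 2$. Such a $T$ has a single branch vertex $c$ (or is a path), and its leaves are precisely the endpoints of the rays emanating from $c$. For any edge $e\in E(T)$, deleting $e$ separates a terminal segment of one ray—whose only leaf of $T$ is that ray's endpoint, so exactly one leaf—from the remainder, which contains all the other leaves (in the path case, both sides are terminal segments with one leaf each). Hence the minimum number of leaves of $T$ in a component of $T-e$ equals $1$ for every edge $e$, so $\mmf{T}=1$. One may also note that this minimum is never $0$, since every component of $T-e$ contains a leaf of $T$: a component on at least two vertices has a leaf distinct from the endpoint of $e$, while a single-vertex component is itself a leaf of $T$.

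Finally I would combine these. If $G$ is complete, then a longest path is a Hamiltonian path, so every vertex lies on every longest path and $\lpt(G)=1$. Otherwise $G$ has at least two maximal cliques, so the minimal subdivided star representation $T$ has $|V(T)|\ge 2$, and \Cref{thm:LeafageBound} gives $\lpt(G)\le\mmf{T}=1$. As any nonempty family of longest paths forces $\lpt(G)\ge 1$, we conclude $\lpt(G)=1$ in all cases, so $G$ has a Gallai vertex and the family is Gallai.

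I expect the only genuinely delicate point to be the first step—confirming that minimality can be achieved without leaving the class of subdivided stars—since \Cref{thm:LeafageBound} is only available for minimal representations. The leaf count in the second step and the concluding case analysis are routine.
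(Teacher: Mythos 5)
Your proof is correct and takes the same route as the paper's: show $\mmf{T}=1$ for a subdivided star and apply \Cref{thm:LeafageBound}. You are in fact more careful than the paper's two-line proof, which invokes the theorem on the given representation without verifying its hypothesis that the representation be \emph{minimal}; your contraction argument, showing that minimalization can be carried out without leaving the class of subdivided stars (the degree count $\deg(u)+\deg(v)-2$ and the adjacent-comparable-bags reduction are both sound), correctly supplies that implicit step.
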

\begin{proof}
    Let $G$ be a connected chordal graph with tree representation $T$ such that $T$ is a subdivision of a star.  Note that $\mmf{T} = 1$.  It follows from \Cref{thm:LeafageBound} that $G$ is complete (implying $\lpt(G)=1$) or $\lpt(G)\le\mmf{T}=1$.
\end{proof}

\section{Conclusion}\label{sec:conclusion}

We close with some open problems on longest path transversals, roughly organized from more difficult questions to easier ones.  In each case, there is a natural analogue for longest cycle transversals in $2$-connected graphs that is also open.

\begin{problem}[Walther~\cite{W69}, c.f. Zamfirescu~\cite{zamfirescu1972}]
    Is there a constant $C$ such that every connected graph $G$ satisfies $\lpt(G)\le C$?
\end{problem}

\begin{problem}[Balister--Gy\H{o}ri--Lehel--Schelp~\cite{BGLS04}]\label{prob:chordal-gallai}
Is it true that the family of chordal graphs is Gallai, meaning that $\lpt(G)=1$ for each connected chordal graph $G$?    
\end{problem}

If a positive resolution to \Cref{prob:chordal-gallai} remains elusive, constant upper bounds for chordal graphs would be very interesting.

\begin{problem}
Is there a constant $C$ such that every connected chordal graph $G$ satisfies $\lpt(G)\le C$? 
\end{problem}

By restricting the allowed host trees, we obtain various subfamilies of chordal graphs.  When the host tree is restricted to be a path, we obtain the interval graphs.  \Cref{cor:substar} shows that when the host tree is restricted to be a subdivided star, we still obtain a Gallai family.  

\begin{conj}\label{conj:caterpillar}
If $G$ is a connected chordal graph admitting a tree representation $T$ such that $T$ is a subdivided caterpillar, then $\lpt(G)=1$.
\end{conj}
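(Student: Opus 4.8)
The plan is to build on the machinery of \Cref{sec:leafage}, since \Cref{cor:lpt_subdiv_caterpillar} already yields $\lpt(G)\le 26$ and the entire remaining task is to sharpen this constant all the way down to $1$. Throughout I would fix a minimal tree representation $T$ that is a subdivided caterpillar, with spine subpath $Q=q_1\cdots q_m$ and legs (paths) attached along $Q$. By \Cref{prop:LPTBag} together with the Helly property, every longest path of $G$ meets a common clique $K=\bag{x}$, so the heart of the problem is to upgrade this \emph{shared clique} to a single \emph{shared vertex}.

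First I would exploit the special shape the leaf-indexed transversal of \Cref{lem:Hand} takes on a caterpillar. The leaves lying in a component $X$ of $T-x$ are exactly the far endpoints of the legs contained in $X$, and the paths $T[x,y_i]$ from $x$ to these leaves all share a common spine prefix before branching into their respective legs. The key step would be to show that the maximal vertices $u_1,\dots,u_k$ of \Cref{lem:Hand} may be taken to coincide: that a single $u^\ast\in\bag{x}$ is simultaneously maximal from $x$ toward every leaf in $X$. The intuition is that among the vertices of $\bag{x}$ there should be one whose subtree reaches farthest along the shared spine, and along a caterpillar the spine portion of $T[x,y_i]$ governs how far a bag-subtree can track toward $y_i$, so a vertex that wins on the spine wins toward every leg at once. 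If this collapse holds, \Cref{lem:Hand} immediately delivers $\lpt(G)=1$.

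Where this heuristic is too optimistic---a subtree that climbs into one particular leg can out-reach a spine-hugging subtree toward that leg's leaf---I would fall back on the auxiliary digraph argument from the proof of \Cref{thm:LeafageBound}. Along a caterpillar the maximum handy path from a spine vertex runs along the spine, so the edges of the digraph $H$ point along $Q$, and a $2$-cycle $xy,yx$ in $H$ must sit at a balance edge of the spine. Instead of bounding the transversal by the number of leaves on one side of this edge, I would try to prove directly that the \emph{single} vertex maximal across the balance edge is a Gallai vertex, using the triangle-completion of \Cref{lem:maxtri} in both spine directions to insert that vertex into any longest path that misses it. A complementary route is induction on the number of legs, with the interval (leafage $\le 2$) case as base via \Cref{cor:substar}, absorbing one leg at a time and arguing that a single leg cannot dislodge the common Gallai vertex.

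The main obstacle in any of these routes is the asymmetry of path endpoints combined with longest paths that fan out into distinct legs: a longest path may plunge deep into one leg and exit along the spine, and two longest paths may use entirely different legs, so the common vertex must be pinned down by a \emph{global} extremal property rather than by a local detour argument. Controlling subtrees confined entirely to a leg (never touching the spine) is the delicate point, since these are exactly the vertices that defeat the ``spine dominates'' heuristic, and I expect the crux of a complete proof to be an extremal choice of $u^\ast$---most plausibly the bag vertex whose subtree extends maximally in the unique spine direction dictated by the balance edge---that remains robust against such leg-confined subtrees.
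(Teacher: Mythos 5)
The statement you are trying to prove is not a theorem of the paper at all: it is \Cref{conj:caterpillar}, stated in the conclusion as an \emph{open} conjecture. The paper's best result for this family is \Cref{cor:lpt_subdiv_caterpillar}, which gives $\lpt(G)\le 26$ via the Cutter--Chooser bound, together with the remark after \Cref{thm:lct-upper-game} sketching $\lct(G)\le 4$ for the cycle analogue; the authors explicitly single out even the double-star subcase as unresolved. So there is no paper proof to match, and your proposal would need to stand entirely on its own --- which it does not.

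The central gap is that your key step is asserted, then retracted, and never replaced by an argument. You first claim that a single vertex $u^\ast\in \bag{x}$ can be taken to be simultaneously maximal from $x$ toward every leaf in the component $X$, which would collapse the transversal of \Cref{lem:Hand} to one vertex; but you yourself then concede this is false in general, because a subtree confined to one leg can out-reach a spine-hugging subtree toward that leg's leaf --- and indeed \Cref{lem:Hand} only bounds $\lpt(G)$ by the \emph{number} of leaves on one side precisely because the vertices $u_1,\ldots,u_k$ genuinely may differ. Your two fallback routes inherit the same problem rather than resolve it. The digraph route rests on the unjustified claim that a maximum handy path ``runs along the spine'': a handy path is a path in $G$, and nothing prevents its subtrees from being confined to a single leg, so the edges of $H$ need not point along $Q$ and the $2$-cycle need not sit at a spine balance edge; moreover, the assertion that the single vertex maximal across that edge is a Gallai vertex is exactly \Cref{conj:caterpillar} restated, not a deduction from \Cref{lem:maxtri}. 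The induction-on-legs route likewise has no mechanism: ``a single leg cannot dislodge the common Gallai vertex'' is the conjecture again, since two longest paths can dive into disjoint legs and the interval-graph base case (\Cref{cor:substar} handles subdivided stars, not general caterpillars with a long spine) gives no handle on how the common vertex persists when a leg is attached. In short, every route defers the same crux --- controlling leg-confined subtrees --- which is the very difficulty that makes this a conjecture rather than a corollary of \Cref{thm:LeafageBound}.
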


From \Cref{cor:lpt_subdiv_caterpillar}, we have that $\lpt(G)\le 26$ when the hypotheses of \Cref{conj:caterpillar} apply.  Perhaps the easiest interesting special case of \Cref{conj:caterpillar} is the case that the host tree $T$ is a subdivided double star, meaning that $T$ has at most two vertices with degree at least $3$.

\section*{Acknowledgements}

The third author was supported in part by NSF RTG DMS-1937241 and an AMS-Simons Travel Grant.

\bibliographystyle{plain}
\bibliography{citations}
\end{document}